\begin{document}

\theoremstyle{plain}
 \newtheorem{thm}{Theorem}[section]
 \newtheorem{lem}[thm]{Lemma}
 \newtheorem{prop}[thm]{Proposition}
 \newtheorem{cor}[thm]{Corollary}
 \newtheorem{cl}[thm]{Claim}
 \newtheorem{conj}[thm]{Conjecture}
 \newtheorem{ques}[thm]{Question}
\theoremstyle{definition}
 \newtheorem{dfn}[thm]{Definition}
\theoremstyle{remark}
 \newtheorem{rem}[thm]{Remark}

\newcommand{\fig}[3][width=12cm]{
\begin{figure}[H]
 \centering 
 \includegraphics[#1,clip]{#2} 
 \caption{#3} 
\label{fig:#2}
\end{figure}}

\newcommand{\blank}{\vspace{0.5\baselineskip}}

\newcommand{\Z}{\mathbb{Z}}
\newcommand{\R}{\mathbb{R}}
\newcommand{\Q}{\mathbb{Q}}
\newcommand{\sT}{T_{\infty}}
\newcommand{\vol}{\mathrm{vol}}
\newcommand{\PSL}{\mathrm{PSL}}

\title{STABLE PRESENTATION LENGTH OF 3-MANIFOLD GROUPS}
\author{KEN'ICHI YOSHIDA}
\date{}

\maketitle

\begin{abstract}
We introduce the stable presentation length of a finitely presentable group. 
The stable presentation length of the fundamental group of a 3-manifold 
can be considered as an analogue of the simplicial volume. 
We show that the stable presentation length have some additive properties  
like the simplicial volume, 
and the simplicial volume of a closed 3-manifold is bounded 
from above and below by constant multiples of 
the stable presentation length of its fundamental group. 
\end{abstract}

\section{Introduction}
\label{section:introduction}

Mostow-Prasad rigidity~\cite{mostow1973strong, prasad1973strong} states that 
a finite volume hyperbolic 3-manifold is determined 
by its fundamental group. 
In particular, the volume of a hyperbolic 3-manifold is a topological invariant. 
The simplicial volume of a manifold introduced 
by Gromov~\cite{gromov1982volume} is defined topologically, 
which is proportional to the volume for a hyperbolic manifold. 
Furthermore, the simplicial volume of a Seifert 3-manifold is equal to zero 
and the simplicial volume of a 3-manifold is additive 
for connected sums and decompositions along incompressible tori 
\cite{soma1981gromov}. 
Therefore, the geometrization theorem proved by 
Perelman~\cite{perelman2002entropy, perelman2003ricci} 
implies that the simplicial volume of an orientable closed 3-manifold 
is equal to the sum of the simplicial volumes of hyperbolic pieces 
after the geometrization. 

The simplicial volume of a closed 3-manifold is uniquely determined 
by its fundamental group. 
If the fundamental group of an orientable closed 3-manifold 
is a freely decomposable, the 3-manifold can be decomposed into 
a connected sum corresponding to the free product. 
Hence it is sufficient to show that 
the claim holds for closed irreducible 3-manifolds. 
A closed Haken 3-manifold is determined 
by its fundamental group \cite{waldhausen1968irreducible}. 
A non-Haken 3-manifold is elliptic or hyperbolic by the geometrization. 
The simplicial volume of an elliptic manifold is equal to zero 
and Mostow rigidity implies that 
a hyperbolic manifold is determined by its fundamental group. 
In order to consider a direct relation between 
the simplicial volume of a 3-manifold and its fundamental group, 
we will introduce 
the stable presentation length of a finitely presentable group.

Milnor and Thurston~\cite{milnor1977characteristic} considered 
some characteristic numbers of manifolds, 
where ``characteristic'' means multiplicativity for 
the finite sheeted coverings, 
i.e. an invariant $C$ of manifolds is a characteristic number 
if it holds that $C(N) = d \cdot C(M)$ for any $d$-sheeted covering $N \to M$. 
For example, the Euler characteristic and the simplicial volume 
are characteristic numbers. 
We say such an invariant is \textit{volume-like} instead of 
a characteristic number in order to indicate similarity to the volume. 
Milnor and Thurston introduced the following volume-like invariant 
of a manifold, which is called the stable $\Delta$-complexity by 
Francaviglia , Frigerio and Martelli~\cite{francaviglia2012stable}. 
The \textit{$\Delta$-complexity} $\sigma (M)$ of a closed $n$-manifold $M$ 
is the minimal number of $n$-simplices in a triangulation of $M$. 
In this paper we use the term ``triangulation'' in a weak sense, i.e. 
a triangulation of a manifold $M$ is a cellular decomposition of $M$ 
such that each cell is a simplex. 
The $\Delta$-complexity is not volume-like, but an upper volume 
in the sense of Reznikov~\cite{reznikov1996volumes}, i.e. 
it holds that $\sigma (N) \leq d \cdot \sigma(M)$ 
for any $d$-sheeted covering $N \to M$. 
Then a natural way gives a volume-like invariant 
defined by 
\[
\sigma _{\infty} (M) = \inf _{N \to M} \frac{\sigma (N)}{\deg (N \to M)}, 
\]
where the infimum is taken over the finite sheeted coverings of $M$. 
$\sigma _{\infty} (M) $ is called the stable $\Delta$-complexity of $M$.

While the stable $\Delta$-complexity is hard to handle, 
the simplicial volume following it can work similarly and has more application. 
Thus the stable $\Delta$-complexity became something obsolete, 
but recently Francaviglia, Frigerio and Martelli~\cite{francaviglia2012stable} 
brought a further development. 
They introduced the stable complexity of a 3-manifold. 
The \textit{complexity} $c(M)$ of 3-manifold $M$ is the minimal number of vertices 
in a simple spine for $M$. 
Matveev~\cite[Theorem 5]{matveev1990complexity} showed that 
the complexity of $M$ is 
equal to its $\Delta$-complexity if $M$ is irreducible and 
not $S^3, \mathbb{RP}^{3}$ or the lens space $L(3,1)$. 
In particular, the two complexities of $M$ coincide 
if $M$ is a hyperbolic 3-manifold. 
The stable complexity $c_{\infty} (M)$ is defined 
in the same way as the stable $\Delta$-complexity. 
Francaviglia, Frigerio and Martelli 
showed that the stable complexity has same additivity 
as the simplicial volume of 3-manifold, 
and therefore $c_{\infty} (M)$ is the sum of the ones 
of hyperbolic pieces after the geometrization 
\cite[Corollary 5.3 and Proposition 5.10]{francaviglia2012stable}. 
Moreover, the stable complexity of 3-manifold is bounded 
from above and below by constant multiples of the simplicial volume. 
This is implied from the fact that the stable $\Delta$-complexity of a hyperbolic 3-manifold is so.

Delzant~\cite{delzant1996decomposition} introduced a complexity $T(G)$ of a finitely presentable group $G$. 
We call it the presentation length according to 
Agol and Liu~\cite{agol2012presentation}. 
Delzant also introduced a relative version of presentation length, 
and he gave an estimate of presentation length for a decomposition of group. 
There are some applications for the presentation length 
of the fundamental group of a 3-manifold. 
Cooper~\cite{cooper1999volume} gave an upper bound 
for the volume of a hyperbolic 3-manifold by the presentation length. 
White~\cite{white2001diameter} gave an estimate 
for the diameter of a closed hyperbolic 3-manifold by the presentation length. 
Agol and Liu~\cite{agol2012presentation} solved Simon's conjecture 
by using presentation length.

Delzant and Potyagailo~\cite{delzant2013complexity} remarked that the volume 
of hyperbolic 3-manifold is not bounded from below by a constant multiple 
of the presentation length. 
They considered a relative presentation length for a thick part 
of a hyperbolic 3-manifold, and showed that the volume is bounded 
from above and below by constant multiples of this relative presentation length. 
We will introduce the stable presentation length instead of this. 

The presentation length is an upper volume. Hence we can define 
the stabilization of the presentation length. 
We will show the stable presentation length of a 3-manifold 
has additivity like the simplicial volume and the stable complexity
(Theorem \ref{thm:additivityfree} and Theorem \ref{thm:additivityJSJ}).

We conjecture that the stable presentation length for a 3-manifold 
is half of the stable complexity (Conjecture \ref{conj}). 
This conjecture is relevant to the following problem. 
Francaviglia, Frigerio and Martelli gave a problem asking whether 
the simplicial volume and the stable complexity of a 3-manifold coincide. 
The additivity of the simplicial volume and the stable complexity 
reduces this problem to the cases for the hyperbolic 3-manifolds 
\cite[Question 6.5]{francaviglia2012stable}. 
The Ehrenpreis conjecture proved by Kahn and Markovic~\cite{kahn2015good} 
states that for any two closed hyperbolic surfaces $M,N$ and $K>1$ 
there are finite coverings of $M,N$ which are $K$-quasiconformal. 
The simplicial volume and the stable $\Delta$-complexity 
of a hyperbolic 3-manifold coincide 
if and only if, roughly speaking, a hyperbolic 3-manifold has 
a finite covering with a triangulation 
in which almost all the tetrahedra after straightening are nearly isometric
to an ideal regular tetrahedron. 
Therefore the above problem can be considered as 
a 3-dimensional version of Ehrenpreis problem.

Frigerio, L\"{o}h, Pagliantini and Sauer~\cite{frigerio2016integral} 
showed that the simplicial volume and the stable integral simplicial volume 
of a closed hyperbolic 3-manifold coincide. 
The integral simplicial volume of an oriented closed manifold is 
defined as the seminorm of the fundamental class in the integer homology. 
The stable integral simplicial volume is 
the stabilization of the integral simplicial volume 
in the same way as the stable $\Delta$-complexity. 
Since the integral simplicial volume is quite similar to 
the $\Delta$-complexity, 
the above result supports the affirmative answer 
to that 3-dimensional version of Ehrenpreis problem 
at least for the closed hyperbolic 3-manifolds. 
In contrast to the lower dimensional cases, 
the simplicial volume and the stable integral simplicial volume 
of a closed hyperbolic manifold of dimension at least 4 cannot coincide 
\cite[Theorem 2.1]{francaviglia2012stable}.

The presentation length of a group can be considered 
as a two-dimensional version of the rank, 
which is the minimal number of generators. 
The relation between the presentation length and the $\Delta$-complexity 
of a 3-manifold 
is analogous to the relation between the rank and the Heegaard genus. 
The Heegaard genus of a closed 3-manifold 
is not less than the rank of its fundamental group, 
and they do not coincide in general 
\cite{boileau1984heegaard, li2013rank}. 
Lackenby~\cite{lackenby2005expanders, lackenby2006heegaard} introduced the rank gradient and the Heegaard gradient 
to approach the virtually Haken conjecture. 
The rank gradient of a finitely generated group $G$ is defined as 
$\inf_{H} (\mathrm{rank}(H) -1)/[G:H]$, 
where the infimum is taken over all the finite index subgroups $H$ of $G$. 
Similarly, 
The Heegaard gradient of a finitely generated group $G$ is defined as 
$\inf_{N} \chi_{-}^{h} (N)/\deg (N \to M)$, 
where $\chi_{-}^{h} (N)$ is the negative of the maximal Euler characteristic 
of a Heegaard surface of $N$ 
and the infimum is taken over all the finite coverings $N$ of $M$. 
Since the virtually fibered conjecture is solved 
by Agol~\cite{agol2013virtual}, 
we know that the rank gradient and the Heegaard gradient of 
a hyperbolic 3-manifold are equal to zero.

We mention a relation between the homology torsion 
and the stable presentation length. 
Let $|\mathrm{Tor}(A)|$ denote 
the order of the torsion part of an abelian group $A$. 
For a group $G$, 
We consider the torsion part of its abelianization $G/[G,G]$ 
(in other words, the first integral homology). 
Pervova and Petronio~\cite{pervova2008complexity} gave the following inequality: 
If $G$ is a finitely presentable group without 2-torsion, 
it holds that 
\[
T(G) \geq \log_{3} |\mathrm{Tor}(G/[G,G])|. 
\] 
As a relevant problem, 
Bergeron, Venkatesh, Le and 
L\"{u}ck~\cite{bergeron2013asymptotic, le2014growth, luck2013approximating}
conjectured 
\[
\lim_{i} \frac{\log |\mathrm{Tor}(G_{i}/[G_{i},G_{i}])|}{[G:G_{i}]} 
= \frac{\vol (M)}{6\pi} 
\]
for a hypervolic 3-manifold $M$ and 
an appropriate sequence $G_{1} > G_{2} > \dots$ 
of finite index subgroups of $G = \pi_{1} (M)$. 
Bergeron and Venkatesh gave conjectures 
also for lattices in more general Lie groups.

At last we give a question for amenable groups. 
The simplicial volume of a manifold of amenable fundamental group 
is equal to zero \cite[Section 3.1]{gromov1982volume}, 
and the rank gradient of a finitely presentable, residually finite, 
infinite amenable group is also equal to zero 
\cite[Theorem 1.2]{lackenby2005expanders}. 
Similarity between the volume and the stable presentation length 
induce the following question. 
\begin{ques}
For a finitely presentable amenable group $G$, 
is the stable presentation length $\sT (G)$ equal to zero? 
\end{ques}

\subsection*{Organization of the paper}

\indent

In Section \ref{section:preliminaries}, we review the definition and elementary 
properties of the presentation length. 

In Section \ref{section:definition}, we define the stable presentation length 
as a volume-like invariant of a finitely presentable group. 

In Section \ref{section:hyperbolic}, we consider the stable presentation length 
of a hyperbolic 3-manifold. 
For a 3-manifold with boundary, it is natural to consider 
its presentation length relative to the fundamental groups of the boundary component. 
we show that the stable presentation length of the hyperbolic 3-manifold 
relative to the cusp subgroups coincides the non-relative stable presentation length (Theorem \ref{thm:relabshyp}). 
In fact, we show a more general result for a residually finite group and 
free abelian subgroups (Theorem \ref{thm:relabs}). 
This result is the most technical part in this paper. 
The simplicial volume has a similar property 
\cite[Theorem 1.5]{loh2009simplicial}. Namely, 
We can consider two versions of simplicial volume of a manifold $M$ with boundary. 
One is the seminorm of the relative fundamental class, 
and another is for the open manifold $\mathrm{int}M$. 
They coincide if the fundamental groups of the boundary components 
are amenable. 
Furthermore, we show that the stable presentation length of a hyperbolic 
3-manifold is bounded by constant multiples of the volume and the stable complexity.

In Section \ref{section:additivity}, 
we show additivity of the stable presentation length. 
We give a proof as with the proof for the stable complexity 
by using Delzant's result (Theorem \ref{thm:lowerbound}) 
and Theorem \ref{thm:relabs}. 
We also show that the stable presentation length 
of a Seifert 3-manifold vanishes (Theorem \ref{thm:seifertvanish}). 
These results imply that the stable presentation length of a closed 3-manifold 
is equal to the sum of the stable presentation lengths of hyperbolic pieces 
after the geometrization.

In Section \ref{section:example}, 
we give some examples of stable presentation length. 
The stable presentation lengths of the surface groups are the only example 
of non-zero explicit value of stable presentation length in this paper. 
We also give examples for fundamental groups of some hyperbolic 3-manifolds. 
Those examples support Conjecture \ref{conj}. 

\subsection*{Acknowledgements} 

The author would like to express his gratitude to Professor Takashi Tsuboi 
for helpful suggestions and stimulating discussions. 
He would also like to thank Professor Ian Agol and Professor Toshitake Kohno 
for their warm encouragement. 
This research is supported by 
JSPS Research Fellowships for Young Scientists 
and the Program for Leading Graduate Schools, MEXT, Japan.

\section{Preliminaries for presentation length}
\label{section:preliminaries}

We review the definition of presentation length 
(also known as Delzant's $T$-invariant) and some elementary facts. 
See Delzant~\cite{delzant1996decomposition} for details. 

\begin{dfn}
\label{dfn:T}
Let $G$ be a finitely presentable group. 
We define the \textit{presentation length} $T(G)$ of $G$ by 
\[
T(G) = \min_{\mathcal{P}} \sum_{i=1}^{m} \max \{0, |r_{i}| -2\}, 
\] 
where we take the minimum for the presentations 
such as \\ $\mathcal{P} = \langle x_1, \dots , x_n | r_1, \dots , r_m \rangle$ of $G$, 
and let $| r_i |$ denote the word length of $r_i$. 
\end{dfn}

We associate the \textit{presentation complex} $P$ to a presentation \\
$\mathcal{P} = \langle x_1, \dots , x_n | r_1, \dots , r_m \rangle$ of $G$. 
$P$ is the 2-dimensional cellular complex consisting of a single 0-cell, 
1-cells and 2-cells corresponding to the generators and relators. 
Then $\pi _1 (P)$ is isomorphic to $G$. 
By dividing a $k$-gon of a presentation complex into $k-2$ triangles, 
$T(G)$ can be realized by a \textit{triangular presentation} of $G$, 
i.e. a presentation $\langle x_1, \dots , x_n | r_1, \dots , r_m \rangle$ 
in which each word length $|r_i|$ is equal to 2 or 3. 
If $G$ has no 2-torsion, we can assume $|r_i| = 3$. 
From now on, a presentation complex is always assumed to be triangular, 
i.e. each of its 2-cells is a triangle or a bigon. 
$T(G)$ is the minimal number of triangles in a presentation complex for $G$. 

Delzant~\cite{delzant1996decomposition} also introduced a relative version 
of the presentation length. We need this in order to estimate 
the presentation length under a decomposition of group. 
Before defining the relative presentation length, 
we prepare the notion of an orbihedron 
due to Haefliger~\cite{haefliger1991complexes}, 
which is an analogue of an orbifold. 
An \textit{orbihedron} is a cellular complex with isotropy groups on cells 
such that 
after an appropriate subdivision of cells, 
the star neighborhood of each cell $c$ is isomorphic to 
the quotient of a cellular complex 
by a certain cellular action of the isotropy group 
which fixes the preimage of $c$. 
This local structure gives the notion of a covering space of an orbihedron, 
analogously to an orbifold. 
The universal covering of an orbihedron is a covering 
without further nontrivial connected coverings. 
The fundamental group of an orbihedron is the deck transformation 
of its universal covering. 
Consequently, an orbihedron is isomorphic to 
the quotient of its universal covering by its fundamental group. 
If the isotropy group on every cell in the universal covering 
of an orbihedron is trivial, 
its isotropy groups are identified with 
subgroups of the fundamental group up to conjugacy. 
Note that isotropy groups of an orbihedron are possibly infinite, 
unlike an orbifold. 

\begin{dfn}
\label{dfn:relT}
Let $G$ be a finitely presentable group. 
Suppose that $C_{1}, \dots ,C_{l}$ are subgroups of $G$. 
A \textit{(relative) presentation complex} $P$ 
for $(G; C_{1}, \dots ,C_{l})$
is a 2-dimensional orbihedron satisfying the following conditions: 
\begin{itemize}
 \item Any 2-cell of $P$ is a triangle or a bigon. 
 \item The 0-cells of $P$ consist of $l$ vertices 
       with isotropy groups $C_{1}, \dots ,C_{l}$. 
       The isotropy groups of the 1-cells and 2-cells are trivial. 
 \item The isotropy groups of the universal covering of $P$ are trivial, 
       and the fundamental group $\pi _{1}^{\mathrm{orb}}(P)$ of $P$ 
       as an orbihedron is isomorphic to $G$. 
       This isomorphism makes the isotropy groups $C_{1}, \dots ,C_{l}$ 
       be subgroups of $G$ up to conjugacy. 
\end{itemize}
We define the \textit{relative presentation length} 
$T(G; C_{1}, \dots ,C_{l})$ as the minimal number of triangles 
in a relative presentation complex for $(G; C_{1}, \dots ,C_{l})$. 
We say that a presentation complex $P$ is \textit{minimal} 
if $P$ realizes the presentation length. 
\end{dfn}

Our definition requires that the isotropy is only on the vertices, 
but this is not essential. 
Indeed, if isotropy of a 2-complex is on edges or 2-cells, 
we can construct a presentation complex by replacing edges with bigons. 
We can consider only the conjugacy classes of $C_{1}, \dots ,C_{l} < G$. 
By definition, we have $T(G; \{ 1 \}) = T(G)$. 
We can allow a presentation complex for $G$ to have more than one vertex, 
namely, $T(G; \{ 1 \}, \dots , \{ 1 \}) = T(G; \{ 1 \})$. 
This follows by contracting vertices of a presentation complex along edges, 
without changing the fundamental group. 
More generally, the following holds. 

\begin{prop}
\emph{\cite[Lemma I.1.3]{delzant1996decomposition}}
For a finitely presentable group $G$ 
and its subgroups $C, C^{\prime}, C_{1}, \dots ,C_{l}$, 
suppose that $C^{\prime}$ is contained in a conjugate of $C$. 
Then 
\[
T(G; C, C^{\prime}, C_{1}, \dots ,C_{l}) = T(G; C, C_{1}, \dots ,C_{l}). 
\]
\end{prop}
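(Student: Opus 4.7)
I would attack the non-trivial inequality $T(G; C, C', C_1, \ldots, C_l) \le T(G; C, C_1, \ldots, C_l)$; the reverse will follow from the essentially inverse observation that the extra marking can be attached by bigons alone, so removing the corresponding bigon-collar from a minimal complex does not increase the triangle count. Since markings are considered up to conjugacy, I would first replace $C$ by a conjugate so that $C' \subseteq C$, and let $\iota \colon C' \hookrightarrow C$ be the identity inclusion.

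Starting from a minimal relative presentation complex $P$ for $(G; C, C_1, \ldots, C_l)$ with $v_C$ the vertex marked $C$, I would build $P'$ by adjoining a new marked vertex $v'$ with isotropy $C'$, a single edge $e \colon v_C \to v'$, and, for each generator $c'_j$ of $C'$, a $2$-cell whose attaching map traces the loop $\iota(c'_j) \cdot e \cdot (c'_j)^{-1} \cdot e^{-1}$ based at $v_C$. Each such $2$-cell crosses exactly two $1$-cells (namely $e$ and $e^{-1}$), with isotropy turns only at the two corners at $v_C$ and $v'$, so it counts as a bigon and contributes $0$ to the triangle total.

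To confirm $\pi_{1}^{\mathrm{orb}}(P') \cong G$, I would apply van Kampen for orbihedra (equivalently, Bass--Serre theory for the evident graph of groups): the edge $e$ together with the new bigons realize the amalgamation $\pi_{1}^{\mathrm{orb}}(P) *_{C'} C' = G *_{C'} C' = G$, identifying the free factor $C'$ contributed by $v'$ with its image in $G$ via $\iota$. All markings then come out as required up to conjugacy, so $P'$ is a valid relative presentation complex realizing the desired bound.

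The main obstacle I anticipate is bookkeeping around the ``bigon'' convention: one must verify that the length of the boundary of a $2$-cell is measured by the number of $1$-cells it traverses, with isotropy turns at corners being free. This is implicit in the paper's remark that edge-isotropies can be eliminated at no cost by replacing edges with bigons; once this convention is pinned down, the construction above is legitimate, and its inverse---collapsing the added bigon-collar around $v'$ in a minimal complex for $(G; C, C', C_1, \ldots, C_l)$---delivers the reverse inequality.
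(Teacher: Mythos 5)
The paper itself gives no proof of this statement---it is quoted verbatim from Delzant (Lemma I.1.3)---so your attempt has to stand on its own. Your first inequality, $T(G; C, C', C_1, \ldots, C_l) \le T(G; C, C_1, \ldots, C_l)$, is sound: after conjugating so that $C' \le C$, adjoining a new vertex $v'$ with isotropy $C'$, a tree edge $e$ to $v_C$, and one bigon per generator $c'_j$ of $C'$ with boundary $\iota(c'_j)\, e\, (c'_j)^{-1} e^{-1}$ realizes $(G * C')/\langle\langle\, c'_j \iota(c'_j)^{-1}\,\rangle\rangle \cong G$ at the cost of zero triangles, and the isotropies land in the right conjugacy classes. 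This is the same device the paper uses in Proposition \ref{prop:cone} and in its remark about trading edge isotropies for bigons (you do implicitly need $C'$ finitely generated to keep the complex finite, but that is harmless here).

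The genuine gap is in the reverse inequality, which you dismiss as the ``essentially inverse observation.'' A minimal presentation complex $Q$ for $(G; C, C', C_1, \ldots, C_l)$ need not contain any ``bigon-collar around $v'$'': the vertex $v'$ may be incident to triangles whose boundary words use nontrivial isotropy turns in $C'$, and there is no distinguished edge from $v'$ to $v_C$ along which $C'$ is carried into $C$. So there is nothing to collapse. This direction is in fact the substantive half of the lemma---it asserts that marking the redundant subgroup $C'$ cannot be exploited to save triangles---and the obvious repairs fail: adding an edge $e$ from $v'$ to $v_C$ together with your bigons produces an HNN extension of $G$ rather than $G$ (the situation is not symmetric with the first direction, where $C'$ enters as a free factor before the bigons are attached); and if instead you trivialize the isotropy at $v'$ and reroute each turn $c' \in C'$ through $e$ as $e^{-1}\iota(c')e$, every affected triangle acquires two extra sides and becomes a pentagon, increasing the triangle count. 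A correct argument must explain how to transfer the isotropy at $v'$ onto $v_C$ without lengthening any $2$-cell boundary---for instance by first normalizing $Q$ so that all nontrivial $C'$-turns occur on bigons, and then dealing with the resulting bigon relations---and that is precisely the step your proposal omits.
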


The relative presentation length is finite in a usual case 
though it was not declared. 
The construction in the proof will be used for the proof of Theorem \ref{thm:relabs}. 

\begin{prop}
\label{prop:cone}
Let $G$ be a finitely presentable group. 
Suppose that $C_{1}, \dots ,C_{l}$ are finitely generated subgroups of $G$. 
Then there is a finite presentation complex for 
$(G; C_{1}, \dots ,C_{l})$, 
in other words, we have $T(G; C_{1}, \dots ,C_{l}) < \infty$. 
\end{prop}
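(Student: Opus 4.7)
The plan is to build an explicit relative presentation complex for $(G; C_1,\dots,C_l)$ by attaching a controlled ``cone'' to a minimal presentation complex of $G$, one cone per subgroup, and then count the resulting triangles.

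First I fix a triangular presentation complex $P_0$ for $G$ realizing $T(G)$, with a single base vertex $v_0$; this exists because $G$ is finitely presentable, so $P_0$ has finitely many triangles. For each $i$, the hypothesis provides a finite generating set $\{c_{i,1},\dots,c_{i,k_i}\}$ of $C_i$, and I pick for each generator $c_{i,j}$ a word $w_{i,j}$ in the generators of $G$ representing the image of $c_{i,j}$ in $G$; combinatorially $w_{i,j}$ is an edge-loop at $v_0$ in $P_0$.

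Next I form $P$ from $P_0$ by adjoining new vertices $v_1,\dots,v_l$ carrying isotropies $C_1,\dots,C_l$, an edge $e_i$ from $v_0$ to $v_i$ for each $i$, and, for each generator $c_{i,j}$, a polygonal 2-cell whose boundary reads $e_i \cdot c_{i,j} \cdot e_i^{-1} \cdot w_{i,j}^{-1}$, where the arc labeled $c_{i,j}$ traverses the local isotropy generator at $v_i$. Subdividing each such $(|w_{i,j}|+3)$-gon into $|w_{i,j}|+1$ triangles produces a triangular orbihedron with $T(G)+\sum_{i,j}(|w_{i,j}|+1)$ triangles, which is finite.

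Finally I invoke van Kampen for orbihedra (Haefliger) to identify $\pi _{1}^{\mathrm{orb}}(P)$: before attaching the new 2-cells, the orbifold fundamental group of the underlying tree-of-groups is the free product $G * C_1 * \cdots * C_l$, with each $C_i$ arising from the isotropy at $v_i$ and identified with its conjugate by $e_i$. The new 2-cells impose precisely the relations $e_i c_{i,j} e_i^{-1} = w_{i,j}$, which collapse each $C_i$ onto its image in $G$. Hence $\pi _{1}^{\mathrm{orb}}(P) \cong G$ with the prescribed conjugacy classes of isotropies, so $P$ is a valid relative presentation complex and $T(G;C_1,\dots,C_l)<\infty$. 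The only nonroutine point is the orbifold van Kampen computation; everything else is finite combinatorics, and this explicit construction is what will be reused later in Theorem \ref{thm:relabs}.
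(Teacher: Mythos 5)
Your construction is the same as the paper's: the paper takes a finite presentation complex for $G$, represents the generators of each $C_i$ by loops, and attaches the cone over these loops with apex a new vertex carrying isotropy $C_i$ — which is exactly your 2-cells with boundary $e_i c_{i,j} e_i^{-1} w_{i,j}^{-1}$ after subdivision. The only difference is that you spell out the orbihedron van Kampen verification that $\pi_1^{\mathrm{orb}}(P)\cong G$, which the paper leaves implicit; your argument is correct.
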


\begin{proof}
Take a presentation complex $P$ for $G$. 
Let $y_{i1} , \dots , y_{ik_{i}}$ be generators of $C_{i}$ for $a \leq i \leq l$. 
There exist simplicial paths $a_{i1} , \dots , a_{ik_{i}}$ in $P$ 
corresponding to $y_{i1} , \dots , y_{ik_{i}}$. 
We construct a complex $P^{\prime}$ 
by attaching cones of $a_{i1} , \dots , a_{ik_{i}}$ to $P$ 
(Figure \ref{fig:spl-cone}). 
Put isotropy $C_{i}$ on the vertex of the $i$-th cone. 
Then $P^{\prime}$ is a finite presentation complex 
for $(G; C_{1}, \dots ,C_{l}, \{ 1 \})$. 
\end{proof}

\fig[width=6cm]{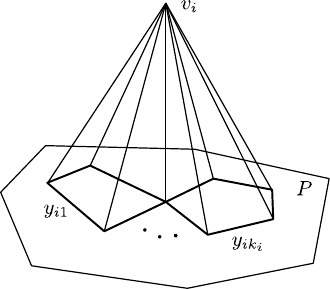}{Construction of a relative presentation complex}

Delzant~\cite{delzant1996decomposition} show how the presentation length behaves 
under a decomposition into a graph of groups. 
A \textit{graph of groups} $\mathcal{G}$ in the sense of Serre~\cite{serre1980trees} 
is a collection of the following data: 
\begin{itemize}
 \item An underlying connected graph $\Gamma$, 
       consisting a vertex set $V$, an edge set $E$ and maps $o_{\pm} \colon E \to V$ 
       from edges to their end points. 
 \item Vertex groups $\{ G_v \}$ and edge groups $\{ C_e \}$ 
       for $v \in V$ and $e \in E$. 
 \item Injections $\{ \iota _{\pm} \colon C_e \hookrightarrow G_{o_{\pm}(e)} \}$ 
       for $e \in E$. 
\end{itemize}
The fundamental group $\pi_{1} (\mathcal{G})$ can be characterized 
as the following. 
A graph of spaces $\mathcal{X}$ corresponding to $\mathcal{G}$ 
is a collection of CW-complexes $\{ X_{v} \}, \{ X_{e} \}$ 
and $\pi_{1}$-injective maps 
$\{i_{\pm} \colon X_{e} \to X_{o_{\pm}(e)} \}$, 
where $\pi_{1} (X_{v}) = G_{v}, \pi _1 (X_{e}) = C_{e}$ 
and $i_{\pm}$ induces $\iota _{\pm}$. 
We construct a space 
\[
X_\mathcal{X} = 
\left( \coprod_{v \in V} X_{v} \sqcup \coprod_{e \in E} (X_{e} \times [-1, 1])\right) /\sim ,
\]
where the gluing relation is that $(x, \pm 1) \sim i_{\pm}(x) $ for $x \in X_e$. 
Then $\pi _1 (\mathcal{G}) = \pi _1 (X_\mathcal{X})$. 
For a given group $G$, we say that $\mathcal{G}$ is a decomposition of $G$ 
if $G \cong \pi_{1} (\mathcal{G})$. 

Let $\mathcal{G}$ be a decomposition of a group $G$. 
Suppose that $G_{1} , \dots , G_{n}$ are the vertex groups of $\mathcal{G}$ 
and $C_{1} , \dots , C_{l}$ are the edge groups of $\mathcal{G}$. 
We construct presentation complexes $P_i$ for $(G_{i} ; C_{i1}, \dots , C_{il_{i}})$, 
where $C_{ij}$ for $1 \leq j \leq l_{i}$ are the edge groups corresponding to the edges 
such that the $i$-th vertex is its end point. 
We can construct a presentation complex $P$ for $(G; C_1 , \dots , C_l)$ 
by gluing $P_1 , \dots , P_n$ along their vertices. 
Then the number of the triangles of $P$ is the sum of the ones of $P_i$. 
Therefore we have the following proposition. 

\begin{prop}
\label{prop:upperbound}
\emph{\cite[Lemma I.1.4]{delzant1996decomposition}}
Let $G, C_i$ and $C_{ij}$ be as above. Then 
\[
T(G; C_1 , \dots , C_l) \leq \sum_{i=1}^{n} T(G_i ; \{C_{ij}\}_{1 \leq j \leq l_{i}}). 
\]
\end{prop}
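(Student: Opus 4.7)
The plan is to follow exactly the construction sketched in the paragraph preceding the proposition, turning it into a rigorous argument. First, for each vertex $i = 1, \dots, n$ I would choose a minimal relative presentation complex $P_i$ for $(G_i; C_{i1}, \dots, C_{il_i})$, so that $P_i$ has $T(G_i; \{C_{ij}\}_{1 \leq j \leq l_i})$ triangles together with a distinguished vertex carrying isotropy $C_{ij}$ for each $j$. These complexes exist and are finite by Proposition \ref{prop:cone} and Definition \ref{dfn:relT}.

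Next I would perform the gluing dictated by the underlying graph $\Gamma$ of $\mathcal{G}$. For each edge $e \in E$ with endpoints $o_-(e)$ and $o_+(e)$, the edge group $C_e$ appears among the distinguished subgroups at both endpoints (with two appearances at the same vertex in the case of a loop). I would identify the corresponding two vertices, leaving one orbifold point with isotropy $C_e$. The injections $\iota_\pm \colon C_e \hookrightarrow G_{o_\pm(e)}$ tell us exactly which conjugates of $C_e$ in the respective vertex groups are being identified. Let $P$ denote the resulting 2-complex, equipped with the obvious orbihedron structure.

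I then need to check that $P$ is a relative presentation complex for $(G; C_1, \dots, C_l)$. The isotropy data and the fact that $P$ consists only of triangles, bigons, edges, and finitely many marked vertices are immediate from the construction. The key point is that $\pi_1^{\mathrm{orb}}(P) \cong G$; this follows from the orbihedral Seifert--van Kampen theorem of Haefliger~\cite{haefliger1991complexes}, which recovers precisely the Bass--Serre presentation of $\pi_1(\mathcal{G})$: each $P_i$ contributes $G_i$, each non-loop edge contributes an amalgamation along $C_e$, and each loop edge contributes an HNN extension along $C_e$. This van Kampen step, while standard, is the step where I would be most careful, because it is where the compatibility of the gluings with $\iota_\pm$ has to be used in the correct way.

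Once $P$ is established as a valid relative presentation complex, the count is immediate: the gluing only identifies vertices, so the triangle set of $P$ is literally the disjoint union of the triangle sets of the $P_i$, giving $\sum_{i=1}^n T(G_i; \{C_{ij}\}_{1 \leq j \leq l_i})$ triangles in total. By the definition of $T(G; C_1, \dots, C_l)$ as an infimum over such complexes, the desired inequality follows.
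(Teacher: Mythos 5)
Your proposal is correct and follows essentially the same route as the paper, which establishes this proposition by the construction sketched in the paragraph immediately before it: glue minimal relative presentation complexes $P_1, \dots, P_n$ along their marked vertices according to the underlying graph of $\mathcal{G}$, observe via the orbihedral van Kampen theorem that the result presents $(G; C_1, \dots, C_l)$, and note that the triangle count is additive. Your added care about loops yielding HNN extensions and the role of $\iota_\pm$ only makes explicit what the paper leaves implicit.
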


We need to consider a ``good'' decomposition 
in order to estimate the presentation length from below. 

\begin{dfn} 
\label{dfn:regid}
Let $\mathcal{G}$ be a decomposition of $G$, 
and let $C_1 , \dots , C_l$ be the edge subgroups of $\mathcal{G}$. 
A subgroup $C$ of $G$ is \textit{rigid} 
if it satisfies the following condition: 
If $G$ acts a tree $T$ without inversion 
and $C$ contains a nontrivial stabilizer of an edge of $T$, 
$C$ fixes a vertex of $T$. 
$\mathcal{G}$ is \textit{rigid} 
if every edge group of $\mathcal{G}$ is rigid. 

Let $C_{ij}$ be as Proposition \ref{prop:upperbound}. 
$\mathcal{G}$ is \textit{reduced} 
if there is no decomposition $\mathcal{G^{\prime}}$ of $G_i$ such that 
$C_{ij}$ is a vertex group of $\mathcal{G^{\prime}}$, for any $G_i$ and $C_{ij}$. 
\end{dfn}

Under the above preparation, we can state the following highly nontrivial fact. 

\begin{thm}
\label{thm:lowerbound}
\emph{\cite[Theorem II]{delzant1996decomposition}}
Let $\mathcal{G}, G_i$ and $C_{ij}$ be as Proposition \ref{prop:upperbound}. 
Suppose that $\mathcal{G}$ is rigid and reduced. 
Then 
\[
T(G) \geq \sum_{i=1}^{n} T(G_i ; \{C_{ij}\}_{1 \leq j \leq l_{i}}). 
\]
\end{thm}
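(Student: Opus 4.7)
The plan is to realize the inequality by a Bass--Serre theoretic dual to Proposition \ref{prop:upperbound}. Let $\mathcal{T}$ denote the Bass--Serre tree of $\mathcal{G}$, on which $G$ acts with vertex stabilizers conjugate to the $G_i$ and edge stabilizers conjugate to the edge groups. Starting from a minimal triangular presentation complex $P$ realizing $T(G)$, lift to the universal cover $\widetilde{P}$ and build a $G$-equivariant simplicial map $\phi \colon \widetilde{P} \to \mathcal{T}$: choose an equivariant assignment of 0-cells of $\widetilde{P}$ to vertices of $\mathcal{T}$, and extend across 1-cells and 2-cells.

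After perturbing $\phi$ into general position, the preimages of the midpoints of edges of $\mathcal{T}$ form a $G$-invariant system of disjoint embedded arcs in $\widetilde{P}$ --- a \emph{track} pattern. Descending to $P$ and cutting along the resulting finite pattern partitions $P$ into subcomplexes $\{Q_\alpha\}$, each contained in the preimage of the open star of a single vertex of $\mathcal{T}$. Grouping by $G$-orbit, the subcomplex over the orbit of the vertex with stabilizer $G_i$, once each boundary track arc is coned to a single marked vertex bearing the isotropy of the corresponding edge group, becomes a relative presentation complex $P_i$ for $(G_i;\, C_{i1}, \dots, C_{il_i})$. In particular $P_i$ has at least $T(G_i;\{C_{ij}\})$ triangles.

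Rigidity is what prevents the track decomposition from strictly refining $\mathcal{G}$: were it to do so, a proper subgroup of some rigid edge group $C_e$ would appear as an edge stabilizer of the refinement, contradicting the defining property in Definition \ref{dfn:regid}. Reducedness keeps the vertex set from degenerating further. The main obstacle --- and where genuine work is needed --- is the triangle count: one must simplify the track pattern on $P$ by standard innermost-disc moves, eliminating track bigons, parallel arcs across a single 2-cell, and null-homotopic circles, until each triangle of $P$ meets the pattern in at most one arc cutting off a corner. With this in hand, cutting and coning converts each triangle of $P$ into a single triangle of some $P_i$ together with residual pieces that degenerate to bigons or edges (which do not contribute to relative presentation length), so that
\[
T(G) \;=\; \#\{\text{triangles of } P\} \;\geq\; \sum_{i=1}^{n} \#\{\text{triangles of } P_i\} \;\geq\; \sum_{i=1}^{n} T(G_i; \{C_{ij}\}_{1 \leq j \leq l_{i}}).
\]
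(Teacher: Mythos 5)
The paper does not prove this statement at all: it is quoted verbatim from Delzant (Theorem II of \emph{delzant1996decomposition}), and the author explicitly flags it as a ``highly nontrivial fact'' that is imported, not re-derived. So the only meaningful comparison is between your sketch and Delzant's actual argument --- and your sketch does follow the right general strategy (a Dunwoody-style resolution: an equivariant map from the universal cover of a minimal complex to the Bass--Serre tree, tracks dual to the edges, cut and cone). Your triangle count is in fact in better shape than you fear: once every arc in every $2$-cell is normal (joins two distinct sides), collapsing each track to its marked vertex turns each original triangle into at most one triangle plus bigons, no matter how many parallel arcs cross it, so no ``one arc per triangle'' normalization is needed.

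The genuine gap is in the step you dispatch in one sentence: identifying the cut-and-coned piece over the orbit of the $i$-th vertex as a relative presentation complex for $(G_i;\, C_{i1},\dots,C_{il_i})$ with the \emph{correct} groups. A priori the track decomposition yields a new graph-of-groups $\mathcal{G}'$ whose vertex groups are only the images of the $\pi_1$ of the pieces (possibly proper subgroups of conjugates of the $G_i$) and whose marked isotropies are the track stabilizers (possibly trivial, and in any case only subgroups of the $C_{ij}$); a relative presentation complex with too-small fundamental group or too-small isotropies gives no lower bound on $T(G_i;\{C_{ij}\})$. This is exactly where rigidity and reducedness must do real work, and your invocation of rigidity misreads Definition \ref{dfn:regid}: the definition does not forbid a proper subgroup of $C_e$ from arising as an edge stabilizer of some $G$-tree, so ``a proper subgroup of $C_e$ appears as an edge stabilizer of the refinement'' is not a contradiction. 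Rigidity is the implication ``if $C_e$ contains a nontrivial edge stabilizer of the new tree $T'$, then $C_e$ fixes a vertex of $T'$,'' which one uses (together with reducedness, which rules out the $G_i$ splitting further relative to the $C_{ij}$) to fold the resolution back onto the original tree and conclude that the pieces carry the full $G_i$ and $C_{ij}$. Supplying that folding argument is the substance of Delzant's several-page proof, and it is missing here.
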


Since a free product decomposition of a group is rigid and reduced, 
we have the following theorem. 

\begin{cor}
\label{cor:free}
\emph{\cite[Corollary I]{delzant1996decomposition}}
Let $G = A*B$ be a free product of finitely presentable groups. 
Then $T(G) = T(A) + T(B)$. 
\end{cor}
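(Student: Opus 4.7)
The plan is to realize $G = A * B$ as the fundamental group of the graph of groups $\mathcal{G}$ whose underlying graph has two vertices (labeled by $A$ and $B$) joined by a single edge with trivial edge group, and then apply the upper and lower bounds from Proposition \ref{prop:upperbound} and Theorem \ref{thm:lowerbound} simultaneously.

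For the upper bound, I would feed this $\mathcal{G}$ directly into Proposition \ref{prop:upperbound}. Since the unique edge group is trivial, the relative presentation lengths at the two vertices reduce to the absolute ones: $T(A;\{1\}) = T(A)$ and $T(B;\{1\}) = T(B)$. The proposition therefore yields
\[
T(G) = T(G;\{1\}) \leq T(A;\{1\}) + T(B;\{1\}) = T(A) + T(B).
\]
Concretely, one just glues a minimal presentation complex of $A$ and a minimal presentation complex of $B$ at a common vertex; the resulting triangular 2-complex has the required fundamental group and the triangle count is additive.

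For the lower bound, I would verify that the decomposition $\mathcal{G}$ is rigid and reduced, so that Theorem \ref{thm:lowerbound} applies. Rigidity is immediate because the single edge group is trivial, so the hypothesis of Definition \ref{dfn:regid} (that the edge group contain a nontrivial edge stabilizer) is vacuous. Reducedness is the condition that the trivial subgroup not appear as a vertex group in any further splitting of $A$ or of $B$; this just reflects the fact that a trivial vertex group would allow one to collapse the edge of $\mathcal{G}$, which is not possible when both factors are nontrivial (the degenerate cases $A = 1$ or $B = 1$ make the statement trivial). With rigidity and reducedness in hand, Theorem \ref{thm:lowerbound} gives
\[
T(G) \geq T(A;\{1\}) + T(B;\{1\}) = T(A) + T(B),
\]
completing the proof.

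The only real subtlety, which I would treat briefly, is justifying that the two-vertex, one-edge graph of groups with trivial edge group is genuinely rigid and reduced in Delzant's sense; everything else is a direct application of the two bracketing results stated in the section. No further analysis of presentation complexes is needed beyond invoking these theorems.
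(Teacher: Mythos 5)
Your proposal is correct and follows exactly the route the paper takes: it deduces the corollary by viewing the free product as a graph of groups with trivial edge group, noting that such a decomposition is rigid and reduced, and combining Proposition \ref{prop:upperbound} with Theorem \ref{thm:lowerbound}. The paper simply asserts this in one sentence (citing Delzant), while you spell out the verification of rigidity and reducedness; there is no substantive difference.
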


We will mainly consider the fundamental group of an orientable 3-manifold. 
A connected sum decomposition of a 3-manifold induces 
a free product decomposition of the fundamental group, 
which concerns Corollary~\ref{cor:free}. 
A (possibly disconnected) orientable surface $S$ embedded 
in an irreducible orientable 3-manifold $M$ 
is an \textit{essential surface} 
if $S$ does not contain a sphere, 
each component $S_{i}$ of $S$ induces an injection 
$\pi_{1}(S_{i}) \hookrightarrow \pi_{1}(M)$, 
and no pair of components of $S$ are parallel. 
A decomposition of an irreducible orientable 3-manifold 
along an essential surface 
induces a decomposition of its fundamental group into a graph of group. 
Then a component of the decomposed manifold corresponds to a vertex group, 
and a component of the essential surface corresponds to an edge group. 
We can apply Theorem \ref{thm:lowerbound} in this case.

\begin{prop}
\label{prop:rigid}
\emph{\cite[Proposition I.6.1]{delzant1996decomposition}}
Let $\mathcal{G}$ be a decomposition of the fundamental group 
of an irreducible orientable 3-manifold $M$. 
Suppose that $\mathcal{G}$ corresponds to a decomposition of $M$ along  
an essential surface. 
Then $\mathcal{G}$ is rigid and reduced. 
\end{prop}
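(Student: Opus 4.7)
The plan is to establish rigidity and reducedness by combining Bass-Serre theory with standard 3-manifold topology. Write $\Sigma = \Sigma_1 \cup \cdots \cup \Sigma_l$ for the essential surface along which $M$ is cut, so that the edge groups of $\mathcal{G}$ are $C_i = \pi_1(\Sigma_i)$ and the vertex groups are the fundamental groups of the pieces $M_v$ of $M \setminus \Sigma$. Since $\Sigma$ is essential (incompressible and not boundary-parallel) and $M$ is irreducible, each $C_i$ injects into $\pi_1(M)$ and the pieces are aspherical, so we may work interchangeably with the group-theoretic decomposition and the topological one.

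For rigidity, I fix an edge group $C = C_i = \pi_1(\Sigma_i)$ and suppose that $G = \pi_1(M)$ acts on a tree $T$ without inversion in such a way that some nontrivial subgroup of $C$ stabilizes an edge of $T$. I would realize the $G$-action geometrically by an essential surface system $\Sigma' \subset M$ dual to $T$, using an accessibility/realization theorem of Dunwoody-Scott type for 3-manifold groups: the vertex stabilizers are the fundamental groups of the components of $M \setminus \Sigma'$ and the edge stabilizers are the $\pi_1$'s of the components of $\Sigma'$. Put $\Sigma$ and $\Sigma'$ in general position with minimal intersection; then the minimal $C$-invariant subtree of $T$ is dual to the pattern $\Sigma' \cap \Sigma_i$ on the surface $\Sigma_i$. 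Using incompressibility of both surface systems together with irreducibility of $M$, innermost-disk and double-curve exchanges remove all inessential components of this pattern. If the resulting pattern were nonempty, every edge of the $C$-invariant subtree would have trivial stabilizer (each such edge is dual to an essential simple closed curve or arc on $\Sigma_i$, which stabilizes nothing after the reduction), contradicting the hypothesis. Hence the pattern is empty, $\Sigma_i$ lies inside a single component of $M \setminus \Sigma'$, and $C$ fixes the corresponding vertex of $T$.

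For reducedness I argue by contradiction: suppose some vertex group $G_v = \pi_1(M_v)$ admitted a graph-of-groups decomposition $\mathcal{G}'$ in which an incident edge group $C_{ij} = \pi_1(\Sigma_{ij})$ appears as one of its vertex groups. Then $M_v$ would split along a further essential surface into pieces, one of which is aspherical with fundamental group isomorphic to the surface group $\pi_1(\Sigma_{ij})$; such a 3-dimensional piece is forced to be a product $\Sigma_{ij} \times I$. But then the interior decomposing surface is isotopic to the boundary component $\Sigma_{ij}$ of $M_v$ and so is boundary-parallel, contradicting the hypothesis that $\mathcal{G}'$ arises from a decomposition along an essential surface.

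The main obstacle is the rigidity half, specifically the geometric realization of the $G$-action on $T$ by an essential surface system and the subsequent transversality reduction. Handling the case in which $\Sigma$ has nonempty boundary (so the intersection pattern on $\Sigma_i$ includes proper arcs as well as closed curves) requires additional care, and one must verify that after innermost-disk/double-curve reductions the surviving essential components of the dual pattern really do correspond to trivial edge stabilizers of the restricted $C$-action; the rest of the argument is then a routine translation between Bass-Serre data and the surface picture.
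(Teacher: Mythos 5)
The paper does not actually prove this statement --- it is quoted from Delzant (Proposition I.6.1 of the cited paper) --- so your proposal can only be judged on its own merits, and the rigidity half has a genuine gap. Your contradiction rests on the claim that, once the pattern $\Sigma'\cap\Sigma_i$ has been made essential, every edge of the minimal $C$-invariant subtree has trivial stabilizer. That is false: an essential simple closed curve $c$ in the pattern contributes the infinite cyclic subgroup $\langle[c]\rangle\leq C$ as a stabilizer of the dual edge, so no contradiction arises. The deeper problem is that you never use the actual hypothesis of rigidity, namely that $C$ contains the \emph{entire} $G$-stabilizer of some edge of $T$ and that this stabilizer is nontrivial; your argument only invokes the weaker fact that $C$ meets some edge stabilizer nontrivially. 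Under that weaker hypothesis the conclusion is simply false: take $M=\Sigma\times S^1$ with $\Sigma$ the closed orientable surface of genus $2$, let $\mathcal{G}$ be the HNN splitting along the fiber $\Sigma$ (an essential surface in an irreducible manifold), and let $T$ be the Bass--Serre tree of the splitting of $M$ along the essential torus $\gamma\times S^1$, where $\gamma\subset\Sigma$ is an essential separating curve. Then $C=\pi_1(\Sigma)$ meets the edge stabilizer $\mathbb{Z}^2$ in the nontrivial subgroup $\langle[\gamma]\rangle$, yet $C$ fixes no vertex of $T$, since its minimal subtree realizes the nontrivial splitting of $\pi_1(\Sigma)$ along $\gamma$. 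Your argument would ``prove'' rigidity in this situation too, so it cannot be correct as written; the inclusion $\mathrm{Stab}(e)\subseteq C$ must enter somewhere (for instance, to force the components of $\Sigma'$ responsible for the nontrivial stabilizer to be homotopic into $\Sigma_i$ and hence isotopic off it), and that step is entirely missing.

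Two smaller points. First, a surface system dual to a $G$-action on $T$ only \emph{resolves} the action: edge groups of the surface decomposition map into, not onto, the edge stabilizers of $T$, so transferring a hypothesis about stabilizers of edges of $T$ to the surface picture requires justification (the converse direction --- that $C$ fixes a vertex once $\Sigma_i$ is disjoint from $\Sigma'$ --- is fine). Second, in the reducedness half the definition quantifies over \emph{all} graph-of-groups decompositions $\mathcal{G}'$ of a vertex group, not only those realized by essential surfaces, so ``contradicting the hypothesis that $\mathcal{G}'$ arises from a decomposition along an essential surface'' contradicts a hypothesis that was never imposed; and even in the surface-realized case you must verify that the piece adjacent to $\Sigma_{ij}$ is the one whose fundamental group is carried by the vertex of $\mathcal{G}'$ equal to $\pi_1(\Sigma_{ij})$ before invoking Stallings--Waldhausen to recognize a product $\Sigma_{ij}\times I$.
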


\section{Definition of stable presentation length}
\label{section:definition}

The (relative) presentation length is an upper volume, 
i.e. it has the following sub-multiplicative property. 

\begin{prop}
\label{prop:upvol}
For a finitely presentable group $G$, let $H$ be a finite index subgroup of $G$. 
Let $d = [G:H]$ denote the index of $H$ in $G$. 
Suppose that $C_1 , \dots , C_l$ are subgroups of $G$. 
Then 
\[
T(H; \{ gC_{i}g^{-1} \cap H \}_{1 \leq i \leq l, g \in G}) 
\leq d \cdot T(G; C_1 , \dots , C_l). 
\]
In particular, $T(H) \leq d \cdot T(G)$. 
\end{prop}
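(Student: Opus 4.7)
The plan is to realize the inequality by lifting a minimal relative presentation complex for $G$ to the covering orbihedron corresponding to the subgroup $H$, and counting triangles multiplicatively.

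First I would take a minimal relative presentation complex $P$ for $(G; C_1, \dots, C_l)$, so that $P$ has exactly $T(G; C_1, \dots, C_l)$ triangles and $\pi_1^{\mathrm{orb}}(P) \cong G$ with isotropies $C_1, \dots, C_l$ at the $l$ marked vertices. The subgroup $H \leq G$ then determines, via orbifold covering theory (Haefliger \cite{haefliger1991complexes}), a $d$-fold covering orbihedron $\widetilde{P} \to P$ with $\pi_1^{\mathrm{orb}}(\widetilde{P}) \cong H$.

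The key combinatorial observation is that every open triangle (or bigon, or edge) of $P$ has trivial isotropy and therefore lifts to exactly $d$ copies in $\widetilde{P}$; hence $\widetilde{P}$ has exactly $d \cdot T(G; C_1, \dots, C_l)$ triangles. For the vertex structure, standard covering space theory for orbihedra identifies the fiber over the $i$-th marked vertex with the set of double cosets $H \backslash G / C_i$: a double coset represented by $g \in G$ gives a vertex of $\widetilde{P}$ whose isotropy is the stabilizer $H \cap gC_ig^{-1}$. These are precisely the subgroups appearing in the list $\{gC_ig^{-1} \cap H\}$, so $\widetilde{P}$ is a valid relative presentation complex for $(H; \{gC_ig^{-1} \cap H\}_{1 \leq i \leq l,\, g \in G})$. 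Taking the minimum over such $P$ yields the claimed inequality. The special case $T(H) \leq d \cdot T(G)$ follows by choosing $l = 1$, $C_1 = \{1\}$ and contracting the multiple vertices of $\widetilde{P}$ to one.

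The main technical point is verifying that the lifted complex is genuinely a presentation complex in the sense of Definition \ref{dfn:relT}, i.e.\ that the orbifold fundamental group of $\widetilde{P}$ really is $H$ with the claimed isotropies and that the triangle count is preserved. Once the orbihedron covering correspondence is invoked, everything else is bookkeeping, since the $2$-cells and edges of $P$ carry trivial isotropy and so their preimages are honest $d$-fold covers in the CW sense.
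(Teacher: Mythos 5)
Your proof follows essentially the same route as the paper: take a minimal relative presentation complex for $(G; C_1, \dots, C_l)$, pass to the $d$-sheeted orbihedron covering corresponding to $H$, observe that the triangles (having trivial isotropy) lift $d$-to-one and that the vertex isotropies become the conjugate-intersections $gC_ig^{-1} \cap H$. The extra detail you supply on the double-coset description of the fibers over marked vertices and on contracting vertices for the absolute case is consistent with what the paper states (or has established earlier), so the argument is correct and matches the paper's proof.
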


We remark that $\{ gC_{i}g^{-1} \cap H \}_{1 \leq i \leq l, g \in G}$ is a finite family of subgroups up to conjugate in $H$, 
since $H$ is a finite index subgroup of $G$. 

\begin{proof}
Let $P$ be a minimal presentation complex for $(G; C_1 , \dots , C_l)$. 
There exists a $d$-sheeted covering $\widetilde{P}$ of $P$ as an orbihedron 
which corresponds to $H \leq G$. 
Then the isotropies on the vertices of $\widetilde{P}$ 
are $\{ gC_{i}g^{-1} \cap H \}_{1 \leq i \leq l, g \in G}$. 
Therefore $\widetilde{P}$ a presentation complex 
for $(H; \{ gC_{i}g^{-1} \cap H \}_{1 \leq i \leq l, g \in G})$ with $d \cdot T(G; C_1 , \dots , C_l)$ triangles. 
\end{proof}

Proposition \ref{prop:upvol} leads to the definition of stable presentation length 
as an analogue of the stable $\Delta$-complexity 
by Milnor and Thurston~\cite{milnor1977characteristic}. 
Stable presentation length is a ``volume-like'' invariant, i.e. 
it is multiplicative for finite index subgroups. 

\begin{dfn}
\label{dfn:stableT}
We define the \textit{stable presentation length} $\sT (G)$ 
of a finitely presentable group $G$ by 
\[
\sT (G) = \inf_{H \leq G} \frac{T(H)}{[G:H]}, 
\]
where the infimum is taken over all the finite index subgroups $H$. 
Furthermore, suppose that $C_1 , \dots , C_l$ are subgroups of $G$.  
We define the \textit{(relative) stable presentation length} as 
\[
\sT (G; C_1 , \dots , C_l) = 
\inf_{H \leq G} \frac{T(H; \{ gC_{i}g^{-1} \cap H \}_{1 \leq i \leq l, g \in G})}{[G:H]}. 
\]
\end{dfn}

\begin{prop}
\label{prop:vol}
Let $G, H, d$ and $C_1 , \dots , C_l$ be as Proposition \ref{prop:upvol}. 
Then 
\[
\sT (H; \{ gC_{i}g^{-1} \cap H \}_{1 \leq i \leq l, g \in G}) 
= d \cdot \sT (G; C_1 , \dots , C_l). 
\]
In particular, $\sT (H) = d \cdot \sT (G)$. 
\end{prop}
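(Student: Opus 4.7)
The plan is to establish the two inequalities $\sT(H; \{gC_ig^{-1} \cap H\}_{i,g}) \leq d \cdot \sT(G; C_1, \dots, C_l)$ and $d \cdot \sT(G; C_1, \dots, C_l) \leq \sT(H; \{gC_ig^{-1} \cap H\}_{i,g})$ separately. Both reduce to routine manipulations with indices, combined with Proposition \ref{prop:upvol} and the observation that any finite index subgroup of $H$ is also a finite index subgroup of $G$, and conversely, that for any finite index $K \leq G$ the intersection $K \cap H$ is finite index in both $K$ and $H$.

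For the inequality $\sT(H; \ldots) \leq d \cdot \sT(G; \ldots)$, I would take an arbitrary finite index subgroup $K \leq G$ and set $L = K \cap H$. Applying Proposition \ref{prop:upvol} with $L \leq K$ and the family $\{gC_ig^{-1} \cap K\}_{g \in G,\, i}$ of subgroups of $K$ yields
\[
T\bigl(L; \{gC_ig^{-1} \cap L\}_{g \in G,\, i}\bigr) \leq [K:L] \cdot T\bigl(K; \{gC_ig^{-1} \cap K\}_{g \in G,\, i}\bigr),
\]
after checking that the induced family $\{k(gC_ig^{-1} \cap K)k^{-1} \cap L\}_{k \in K,\, g \in G,\, i}$ equals $\{gC_ig^{-1} \cap L\}_{g \in G,\, i}$ as a set of subgroups of $L$ (this uses $L \leq K$ so the extra intersection with $K$ is redundant, and that $kg$ already ranges over all of $G$ when $k = 1$). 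The elementary identity $[G:K][K:L] = [G:H][H:L] = d[H:L]$ gives $[K:L]/[H:L] = d/[G:K]$, so dividing the displayed inequality by $[H:L]$ produces $T(L; \ldots)/[H:L] \leq d \cdot T(K; \ldots)/[G:K]$. Taking the infimum over $K$ on the right and noting that $L = K \cap H$ ranges through a cofinal family of finite index subgroups of $H$ yields the desired bound.

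For the reverse inequality $d \cdot \sT(G; \ldots) \leq \sT(H; \ldots)$, any finite index subgroup $L \leq H$ is automatically finite index in $G$ with $[G:L] = d \cdot [H:L]$. Plugging $L$ into the definition of $\sT(G; C_1, \dots, C_l)$ directly gives
\[
\sT(G; C_1, \dots, C_l) \leq \frac{T\bigl(L; \{gC_ig^{-1} \cap L\}_{g \in G,\, i}\bigr)}{d\,[H:L]}.
\]
Multiplying by $d$ and taking the infimum over such $L$ yields the inequality, after one again verifies that the relative subgroups appearing here agree with those in the definition of $\sT(H; \{gC_ig^{-1} \cap H\}_{i,g})$, which follows because the conjugates $h(gC_ig^{-1} \cap H)h^{-1} \cap L$ for $h \in H$, $g \in G$ are exactly the $g'C_ig'^{-1} \cap L$ for $g' \in G$.

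I do not expect any significant obstacle here; the argument is pure bookkeeping. The only point that merits care is the identification of the two collections of conjugate-intersection subgroups under passage from $G$ to $K$ to $L$, and the correct handling of the fact that the relative subgroups in the definition of $\sT$ are indexed by conjugacy in the ambient group $G$, not the intermediate group $K$.
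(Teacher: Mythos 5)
Your argument is correct and is essentially the paper's own proof: the paper likewise applies Proposition \ref{prop:upvol} to $G'\cap H\leq G'$ to show the infimum defining $\sT(G;C_1,\dots,C_l)$ may be restricted to finite index subgroups of $H$, and then compares the two infima via $[G:H']=d\,[H:H']$, which is exactly your pair of inequalities repackaged. Your explicit check that the families of conjugate-intersection subgroups agree under passage from $G$ to $K$ to $L$ is a point the paper leaves implicit.
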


\begin{proof}
Take a finite index subgroup $G^{\prime}$ of $G$. 
Then 
$H^{\prime} = G^{\prime} \cap H$ is also a finite index subgroup of $G$. 
We have 
\[
T (H^{\prime}; \{ gC_{i}g^{-1} \cap H^{\prime} 
\}_{1 \leq i \leq l, g \in G})
\leq 
[G^{\prime} : H^{\prime}] T (G^{\prime}; \{ gC_{i}g^{-1} \cap G^{\prime}
\}_{1 \leq i \leq l, g \in G})
\]
by Proposition \ref{prop:upvol}. 
Hence we can calculate $\sT (G; C_1 , \dots , C_l)$ by taking the infimum 
for only the subgroups of $H$. 
Therefore 
\begin{align*}
\sT (H; \{ gC_{i}g^{-1} \cap H \}_{1 \leq i \leq l, g \in G}) 
& = 
\inf_{H^{\prime} \leq H} \frac{T(H^{\prime}; 
\{ gC_{i}g^{-1} \cap H^{\prime} \}_{1 \leq i \leq l, g \in G})}{[H:H^{\prime}]} \\
& =  
d \cdot \inf_{H^{\prime} \leq H} \frac{T(H^{\prime}; 
\{ gC_{i}g^{-1} \cap H^{\prime} \}_{1 \leq i \leq l, g \in G})}{[G:H^{\prime}]} \\
& = 
d \cdot \sT (G; C_1 , \dots , C_l). 
\end{align*}
\end{proof}

\section{Stable presentation length for hyperbolic 3-manifolds}
\label{section:hyperbolic}

We consider the stable presentation length of the fundamental group 
of a compact 3-manifold $M$. 
We write 
\begin{align*}
T(M) = T(\pi _1 (M)), \quad \sT (M) = \sT (\pi _1 (M)), \\
T(M; \partial M) = T(\pi _1 (M); \pi _1 (S_1), \dots , \pi _1 (S_l)), \\ 
\sT (M; \partial M) = \sT (\pi _1 (M); \pi _1 (S_1), \dots , \pi _1 (S_l)), 
\end{align*}
where $S_1, \dots , S_l$ are the components of $\partial M$. 
We call them the (relative, stable) presentation length of $M$ respectively. 

If $M$ is a 3-manifold with boundary, we can also consider 
the relative presentation length $T(M; \partial M)$. 
For instance, let $M$ be a finite volume cusped hyperbolic 3-manifold. 
We consider $M$ as a compact 3-manifold with boundary. 
The interior of $M$ admits a hyperbolic metric. 
Let $S_1, \dots , S_l$ be the components of $\partial M$. 
The 2-skeleton of an ideal triangulation of $M$ 
(i.e. a cellular decomposition of the space obtained by smashing each boundary component of $M$ to a point such that every 3-cell is tetrahedron and 
its vertices are the points from boundary components of $M$) 
can be regarded as a relative presentation complex of 
$(\pi _1 (M); \pi _1 (S_1), \dots , \pi _1 (S_l))$. 
We show that this relative stable presentation length coincides with the absolute 
stable presentation length. 

\begin{thm}
\label{thm:relabshyp}
For a finite volume hyperbolic 3-manifold $M$, it holds that 
$\sT (M; \partial M) = \sT (M)$. 
\end{thm}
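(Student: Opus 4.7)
The plan is to deduce Theorem \ref{thm:relabshyp} as a direct corollary of the more general Theorem \ref{thm:relabs}, which asserts $\sT(G;A_1,\dots,A_l)=\sT(G)$ for any finitely presentable residually finite group $G$ equipped with finitely generated free abelian subgroups $A_1,\dots,A_l$. Once that theorem is available, the present statement reduces to verifying its hypotheses in the hyperbolic setting.

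First I would check those hypotheses. Let $M$ be a finite volume hyperbolic 3-manifold with boundary components $S_1,\dots,S_l$. The group $G=\pi_1(M)$ is finitely presentable because $M$ is a compact 3-manifold, and it is residually finite: one can invoke Hempel's theorem (combined with geometrization) that every compact 3-manifold group is residually finite, or argue directly that $G$ embeds as a discrete subgroup of $\mathrm{PSL}_2(\mathbb{C})$, hence is linear, hence residually finite by Mal'cev. Each boundary component $S_i$ is the horospherical cross-section of a cusp of the complete hyperbolic interior of $M$, so $S_i$ is a torus and $\pi_1(S_i)\cong\mathbb{Z}^2$ is free abelian of rank two, as required. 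Second, I would apply Theorem \ref{thm:relabs} with $G=\pi_1(M)$ and $A_i=\pi_1(S_i)$ to conclude
\[
\sT(M;\partial M)=\sT(\pi_1(M);\pi_1(S_1),\dots,\pi_1(S_l))=\sT(\pi_1(M))=\sT(M),
\]
where the outer equalities are the notational conventions introduced at the start of this section.

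The main obstacle in this plan is Theorem \ref{thm:relabs} itself, which the author singles out as the most technical result in the paper. The expected strategy there is to use residual finiteness to produce finite covers in which every lift of each $A_i$ has arbitrarily large index inside the peripheral group containing it, so that the bounded overhead required to cap off each isotropy vertex with a fixed presentation complex for $\mathbb{Z}^{k_i}$ is amortized over the degree of the cover and disappears in the stable limit. This is the three-dimensional analogue of L\"oh's amenable-boundary argument for simplicial volume referenced in the introduction, and it is the only nontrivial ingredient hidden behind the otherwise formal reduction sketched above.
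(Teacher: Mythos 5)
Your proposal is correct and matches the paper's own route: the paper proves Theorem \ref{thm:relabshyp} precisely by observing that $\pi_1(M)$ is residually finite (being linear) and that each boundary component is a torus with $\pi_1(S_i)\cong\Z^2$ of rank two, and then invoking the general Theorem \ref{thm:relabs}. Your sketch of the mechanism behind Theorem \ref{thm:relabs} (amortizing the cost of capping off peripheral subgroups over covers of large degree) also agrees with the paper's actual argument there.
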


More generally, we show the following theorem. 
Since $\pi _1 (M)$ is linear for a hyperbolic 3-manifold $M$, 
it is residually finite \cite{hempel1987residual}. 

\begin{thm}
\label{thm:relabs}
Let $G$ be a finitely presentable group, 
and let $C_{1}, \dots , C_{l}$ be free abelian subgroups of $G$ 
whose ranks are at least two. 
Suppose $G$ is residually finite. 
Then it holds that $\sT (G; C_{1}, \dots , C_{l}) = \sT (G)$. 
\end{thm}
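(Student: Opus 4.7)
The plan is to prove both inequalities $\sT(G; C_1, \dots, C_l) \le \sT(G)$ and $\sT(G) \le \sT(G; C_1, \dots, C_l)$. In each case I start from a finite-index subgroup $H \le G$ whose (relative or absolute) presentation length approximates the appropriate infimum, convert its presentation complex to the other type by a local modification at each isotropy vertex, and control the cost stably by descending to a deeper finite-index normal subgroup $H' \triangleleft G$ via residual finiteness. The lever is the standard consequence of residual finiteness: for any finite $S \subset G \setminus \{e\}$, there exists a finite-index normal $H' \triangleleft G$ with $H' \cap S = \emptyset$. Together with the double-coset identity $\sum_{j'} [g D_j g^{-1} : D_{j'}'] = [H : H']$, where the $D_j = g_j C_{i(j)} g_j^{-1} \cap H$ are the isotropies of $H$'s relative complex, this lets me control both the number of isotropy vertices in the lifted complex and the combinatorial size of each sublattice $D_j \cap H' \le D_j \cong \Z^{n_{i(j)}}$. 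Let $m \le l\,[G:H]$ denote the number of isotropy classes in $H$.

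For $\sT(G) \le \sT(G; C_1, \dots, C_l)$, given $\epsilon > 0$ I pick $H$ with $T(H; \{D_j\})/[G:H] < \sT(G; C_1,\dots,C_l) + \epsilon/2$, fix a primitive element $d_j \in D_j$ for each class, and apply residual finiteness to the finite set $\bigcup_j \{d_j^k : 1 \le k < K\}$ to obtain $H' \triangleleft G$ with $[D_j : D_j \cap H'] \ge K$ for every $j$. The lifted relative complex then has at most $m\,[H:H']/K$ isotropy vertices, and at each such vertex I glue in a minimal presentation complex for $\Z^{n_{i(j)}}$, costing at most $T(\Z^{n_{i(j)}}) \le 2\binom{n_{i(j)}}{2}$ triangles. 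Combining sub-multiplicativity (Proposition \ref{prop:upvol}) for the relative ratio with this glue-in cost, the extra contribution divided by $[G:H']$ is at most $l\, T_{\max}/K$, which is less than $\epsilon/2$ for $K$ large, yielding $T(H')/[G:H'] < \sT(G; C_1,\dots,C_l) + \epsilon$.

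For the harder direction $\sT(G; C_1, \dots, C_l) \le \sT(G)$, I pick $H$ with $T(H)/[G:H] < \sT(G) + \epsilon/2$, fix a basis $d_1^{(j)}, \dots, d_{n_{i(j)}}^{(j)}$ of each $D_j$ with uniform word-length bound $L$ in $H$'s presentation, and apply residual finiteness to the cube $\bigcup_j \{\prod_k (d_k^{(j)})^{a_k} : 0 < \max_k |a_k| < K\}$ to produce $H' \triangleleft G$ such that every non-identity element of $D_j \cap H'$ has sup-norm $\ge K$ in this basis. Minkowski's first theorem then forces $[D_j : D_j \cap H'] \ge K^{n_{i(j)}}$, and a standard lattice-reduction argument yields a basis of $D_j \cap H'$ whose sup-norms sum to $O\bigl([D_j : D_j \cap H']/K^{n_{i(j)} - 1}\bigr)$. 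Applying the cone construction (Proposition \ref{prop:cone}) to the lift of a minimal complex of $H$ produces a relative complex whose cone cost per isotropy class is at most $L$ times this sum; multiplying by the number of lifted vertices $[H:H']/[D_j : D_j \cap H']$ makes the index factor cancel, giving total cost per class $O([H:H']/K^{n_{i(j)} - 1})$. Dividing by $[G:H']$ and summing over classes yields $O(l/K^{n_{i(j)} - 1})$, which vanishes as $K \to \infty$ because each $n_{i(j)} \ge 2$.

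The main obstacle is this last estimate: it hinges on the Minkowski-type interplay between the rank of a lattice and the sup-norms of a short basis, and the rank-$\ge 2$ hypothesis is sharp, since for rank-one $C_i$ the cone cost per isotropy class would be $\Theta(K)$, exactly balancing the $1/K$ decrease in the number of classes.
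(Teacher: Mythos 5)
Your direction $\sT (G; C_{1}, \dots , C_{l}) \leq \sT (G)$ is essentially the paper's own argument: lift a minimal absolute complex to a deep normal subgroup, cone off a reduced basis of each $D_j \cap H'$ as in Proposition \ref{prop:cone}, and use the reduced-basis inequality (Lemma \ref{lem:reducedbasis}) plus residual finiteness to make the cone cost per unit index vanish. Your appeal to Minkowski's first theorem is just a more explicit form of the same estimate, and this is indeed where rank $\geq 2$ enters.

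The gap is in the other direction, $\sT (G) \leq \sT (G; C_{1}, \dots , C_{l})$, which is the harder half of the theorem. Your cost accounting --- ``at each isotropy vertex glue in a minimal presentation complex for $\Z^{n}$, costing at most $T(\Z^{n})$ triangles'' --- does not produce a presentation complex for $H'$. Removing the isotropy at a vertex $v$ with group $D' = D_j \cap H'$ requires attaching 2-cells that kill the kernel of the natural surjection from $\pi_1$ of the link of $v$ onto $D'$ (one has $\pi_1^{\mathrm{orb}} = \pi_1(\text{truncated complex}) *_{\pi_1(\mathrm{lk}(v))} D'$, and an honest complex must realize this amalgam); merely wedging a presentation complex of $\Z^{n}$ at $v$ yields a free product, not $H'$. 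The trouble is that in the cover the link of $v$ is a degree-$[D_j : D']$ cover of the original link, so $\pi_1(\mathrm{lk}(v))$ is free of rank comparable to $[D_j : D'] \geq K$, and the number of triangles needed to present $D'$ over this link grows linearly in $[D_j : D']$ --- exactly cancelling the factor $1/K$ you gain from having fewer isotropy vertices. This is precisely the difficulty the paper's proof is built around: it truncates the vertex, realizes a presentation of $C$ over the sectional graph by a fixed finite 2-complex $K$ at the level of $G$, lifts $K$ to the cover, and then contracts everything lying in the interior of a fundamental domain $F$ of the sublattice determined by a reduced basis, so that only on the order of $\vol (\partial F)$ cells survive per vertex; the conclusion follows because $\vol (\partial F)/\vol (F) \to 0$ for reduced bases of deep sublattices. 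Your proposal contains no substitute for this contraction/isoperimetric step, so this direction is not proved.
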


We remark that it is necessary to suppose the rank of $C_{i}$ is at least two. 
The inequality does not hold for the case of Theorem \ref{thm:cuspedsurface}, 
since $\sT (\pi _1 (\Sigma _{g,b})) =0$. 

For an integer $p>1$, the \textit{$p$-characteristic covering} of torus $T^2$ is 
the covering which corresponds to the subgroup 
$p\Z \times p\Z < \Z \times \Z \cong \pi _1 (T^2)$. 
A \textit{$p$-characteristic covering} of $M$ is a finite covering 
whose restriction on each cusp is a union of $p$-characteristic coverings of torus. 
A hyperbolic 3-manifold $M$ admits $p$-characteristic coverings 
for arbitrarily large $p$ \cite[Lemma 4.1]{hempel1987residual}. 
We can use them for a proof of Theorem \ref{thm:relabshyp}. 
In general, however, a residually finite group $G$ with $C_{1}, \dots , C_{l}$
may not have such subgroups. 
Nonetheless, we can take a nearly orthogonal basis of subgroup of $C_{i}$ 
with respect to a basis of $C_{i}$ $(1 \leq i \leq l)$. 

A \textit{lattice} in $\R ^{n}$ is a discrete subgroup of $\R ^{n}$ 
which spans $\R ^{n}$. 
A lattice in $\R ^{n}$ has a nearly orthogonal basis 
as in Lemma~\ref{lem:reducedbasis}, called a \textit{reduced basis}. 
We refer to Cassels \cite[Ch.VIII.5.2]{cassels1971introduction} 
for a proof. 
Lenstra, Lenstra and Lov\'{a}sz~\cite{lenstra1982factoring} gave 
a polynomial time algorithm to find a reduce basis. 
We will use the following lemma with a 1-norm on $\R ^{n}$. 

\begin{lem}
\label{lem:reducedbasis}
Given a norm $\| \cdot \|$ in $\R ^{n}$, 
there is a constant $\epsilon _{n}$ such that the following holds. 
If $\Lambda$ is a lattice in $\R ^{n}$, 
then there is a basis $(v_{1}, \dots , v_{n})$ of $\Lambda$ 
such that 
\[
d(\Lambda) \geq \epsilon _{n} \|v_{1}\| \cdots  \|v_{n}\|, 
\]
where $d(\Lambda)$ is the covolume of $\Lambda$, which is the determinant 
of the matrix whose columns are $v_{i}$'s. 
\end{lem}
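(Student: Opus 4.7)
The plan is to reduce to the Euclidean norm by norm equivalence, then combine two classical ingredients from the geometry of numbers: Minkowski's second theorem on successive minima, and Mahler's lemma producing a $\Z$-basis that nearly realizes them.

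First I would invoke the fact that any two norms on $\R^n$ are Lipschitz equivalent. Thus there exist constants $0 < a \leq b$, depending only on $\|\cdot\|$, such that $a \|v\|_2 \leq \|v\| \leq b \|v\|_2$ for all $v \in \R^n$, where $\|\cdot\|_2$ denotes the Euclidean norm. Consequently the lemma for a general norm follows from the special case $\|\cdot\| = \|\cdot\|_2$, at the cost of multiplying the constant by $(a/b)^n$. So I may assume $\|\cdot\| = \|\cdot\|_2$ throughout.

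Next I would introduce the successive minima $\lambda_1 \leq \cdots \leq \lambda_n$ of $\Lambda$, where $\lambda_k$ is the infimum of $r > 0$ such that the closed Euclidean ball of radius $r$ contains at least $k$ linearly independent vectors of $\Lambda$. Minkowski's second theorem then yields
\[
\lambda_1 \lambda_2 \cdots \lambda_n \;\leq\; \frac{2^n}{\vol(B^n)}\, d(\Lambda),
\]
where $B^n$ is the Euclidean unit ball. This is the analytic input controlling the product of the $\lambda_i$ in terms of $d(\Lambda)$.

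Finally, to pass from the successive minima to an honest basis, I would invoke Mahler's lemma (or, equivalently, a Korkine--Zolotarev or LLL-type reduction procedure as in Lenstra--Lenstra--Lov\'{a}sz) to produce a $\Z$-basis $(v_1, \dots, v_n)$ of $\Lambda$ satisfying $\|v_i\|_2 \leq \max(1, i/2)\, \lambda_i$ for $1 \leq i \leq n$. Multiplying these bounds together gives $\|v_1\|_2 \cdots \|v_n\|_2 \leq C_n\, d(\Lambda)$ for a constant $C_n$ depending only on $n$, and setting $\epsilon_n = 1/C_n$ (then rescaling by $(a/b)^n$ to return to the general norm) finishes the proof. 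The main obstacle is precisely this last step: for $n \geq 5$ the successive minima are in general not simultaneously attained by any single $\Z$-basis of $\Lambda$, so one cannot ask $\|v_i\|_2 = \lambda_i$ on the nose, and one must settle for the dimension-dependent slack that Mahler's lemma provides.
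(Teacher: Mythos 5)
Your argument is correct: reducing to the Euclidean norm by norm equivalence, bounding $\lambda_1\cdots\lambda_n$ via Minkowski's second theorem, and then passing to an actual $\Z$-basis with $\|v_i\|_2 \leq \max(1,i/2)\,\lambda_i$ via Mahler's lemma yields exactly the stated inequality with a constant depending only on $n$ (and the norm). The paper itself gives no proof --- it simply cites Cassels, Ch.~VIII.5.2 --- and what you have written is essentially the standard argument behind that citation, so there is nothing to compare beyond noting that your reduction constant should really be $b^{-n}$ rather than $(a/b)^{n}$, a harmless slip since only a norm-dependent constant is required.
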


\begin{proof}[proof of Theorem \ref{thm:relabs}]
We first show that $\sT (G; C_{1}, \dots , C_{l}) \leq \sT (G)$. 

Assume that $\sT (G; C_{1}, \dots , C_{l}) \leq T (G)$ 
for any groups $G$ and $C_{1}, \dots , C_{l}$ satisfying the condition. 
Then 
$\sT (H; \{ gC_{i}g^{-1} \cap H \}_{1 \leq i \leq l, g \in G}) \leq T(H)$ 
for a finite index subgroup $H$ of $G$. 
Proposition~\ref{prop:vol} implies that 
$\sT (G; C_1 , \dots , C_l) \leq T(H)/[G:H]$. 
By taking the infimum over $H$, 
we obtain $\sT (G; C_{1}, \dots , C_{l}) \leq \sT (G)$. 
Hence it is sufficient to show that 
$\sT (G; C_{1}, \dots , C_{l}) \leq T (G)$. 
For simplicity, we assume $l=1$ 
and write $C=C_{1}$ and $r=\mathrm{rank}(C) \geq 2$. 

Take a minimal presentation complex $P$ for $G$. 
Let $\alpha _{1}, \dots , \alpha _{r}$ be simplicial paths in $P$ 
representing generators $x_{1}, \dots , x_{r}$ of $C$. 
Let $a_{i}$ denote the length of $\alpha _{i}$ for $1 \leq i \leq r$. 
Since a finite index subgroup $H$ of $G$ contains a finite index normal subgroup $\bigcap_{g \in G}gHg^{-1}$ of $G$, 
it is sufficient to consider the finite index normal subgroups of $G$. 
Suppose that $H$ is a finite index normal subgroup of $G$. 
Let $d$ denote the index of $H < G$. 
Let $\widetilde{P}$ be the covering of $P$ corresponding to $H$. 
Let $\{ C^{\prime}_{1}, \dots , C^{\prime}_{m} \}$ be 
subgroups of $H$ representing the conjugacy classes of 
$\{ gCg^{-1} \cap H \}_{g \in G}$. 
$C^{\prime}_{i}$ can be regarded as a finite index subgroup of $C$ 
by the natural inclusion $\iota _{i} \colon C^{\prime}_{i} \hookrightarrow C$. 
Since $H$ is normal in $G$, all the images of $\iota _{i}$'s coincide 
and have index $d/m$ in $C$. 
We regard $C \cong \Z ^{r}$ as a lattice in $\R ^{r}$ 
and put the 1-norm $\| \cdot \|$ in $\R ^{r}$ 
with respect to the basis $(x_{1} / a_{1}, \dots , x_{r} / a_{r})$. 

We construct a presentation complex $\widetilde{P}^{\prime}$ 
for $(H; \{ C^{\prime}_{i} \}_{1 \leq i \leq m})$ 
by attaching 2-cells to $\widetilde{P}$. 
We take a reduced basis $(y_{1}, \dots , y_{r})$ of $\iota _{i} (C^{\prime}_{i})$ 
as in Lemma \ref{lem:reducedbasis}. 
Let $\beta _{i1}, \dots , \beta _{ir}$ be paths in $\widetilde{P}$ representing 
$y_{1}, \dots , y_{r} \in \iota _{i} (C^{\prime}_{i})$ such that 
the length of $\beta _{ij}$ is $\| y_{j} \|$. 
We obtain a presentation complex $\widetilde{P}^{\prime}$ by attaching cones of 
$\beta _{ij}$'s as in the proof of Proposition \ref{prop:cone}. 
The number of the triangles of $\widetilde{P}^{\prime}$ is 
\[
d \cdot T(G) + m(\| y_{1} \| + \dots + \| y_{r} \|). 
\]
It holds that $d/m \geq \epsilon _{r} \|y_{1}\| \cdots  \|y_{r}\|$ 
by Lemma \ref{lem:reducedbasis}. 
Hence 
\[
\sT (G;C) \leq \frac{T(H; \{ C^{\prime}_{i} \}_{1 \leq i \leq m})}{d} 
\leq T(G) + 
\frac{\| y_{1} \| + \dots + \| y_{r} \|}{\epsilon _{r} \|y_{1}\| \cdots  \|y_{r}\|}
\]
Since $G$ is residually finite, there is a normal subgroup $H$ of $G$ 
such that every $\| y_{j} \|$ for $1 \leq j \leq r$ is arbitrarily large. 
We have supposed that $r \geq 2$. 
Therefore we obtain $\sT (G;C) \leq T(G)$. 

\blank

Conversely we show that $\sT (G) \leq \sT(G;C)$. 
By the same way as the above argument, 
it is sufficient to show that $\sT (G) \leq T(G;C)$. 
Take a minimal presentation complex $Q$ for $(G;C)$. 
We construct a presentation complex for $G$ by truncating 
a neighborhood of the vertex of $Q$ (Figure \ref{fig:spl-truncation}) and attaching 2-cells. 
Let $Q^{\prime}$ be the truncated complex. 
Let $\Gamma$ be the sectional graph of the truncation in $Q^{\prime}$. 
Attaching edges to $\Gamma$ if necessary, we may assume that $\Gamma$ is connected 
and the natural map from $\pi _1 (\Gamma)$ to $C$ is surjective. 
We contract vertices of $Q^{\prime}$ along edges of $\Gamma$ 
to obtain a 2-complex $Q^{\prime \prime}$. 
We obtain a bouquet $\Gamma ^{\prime}$ in $Q^{\prime \prime}$ from $\Gamma$. 
Then we have the natural surjection $p \colon \pi _1 (\Gamma ^{\prime}) \to C$. 
Attaching more edges to $\Gamma ^{\prime}$ if necessary, 
we may assume that there are edges $\gamma _{1}, \dots , \gamma _{r}$ 
such that the images of the elements 
$[\gamma _{1}], \dots , [\gamma _{r}] \in \pi _1 (\Gamma ^{\prime})$ forms a basis of $C$. 
Let $\gamma ^{\prime}_{1}, \dots , \gamma ^{\prime}_{s}$ be 
the other edges of $\Gamma ^{\prime}$. 
Write $z_{j} = p[\gamma _{j}]$ and $z^{\prime}_{k} = p[\gamma ^{\prime}_{k}]$ 
for $1 \leq j \leq r$ and $1 \leq k \leq s$. 
$z^{\prime}_{k}$ can be presented as a product of $z_{j}$'s, and 
let $b_{k}$ denote its word length. 
We obtain a presentation complex $Q^{\prime \prime \prime}$ for $G$ 
by attaching triangles to $Q^{\prime \prime}$ along $\Gamma ^{\prime}$, 
where $r(r-1)$ attached triangles correspond to the commutators 
$[z_{i}, z_{j}] = z_{i}z_{j}z_{i}^{-1}z_{j}^{-1}$ 
($1 \leq i,j \leq r$) and at most $b_{1} + \dots + b_{s} -s$ attached triangles 
correspond to the presentation of $z^{\prime}_{k}$ by $z_{j}$'s. 
Let $K$ denote the union of $\Gamma ^{\prime}$ and the attached triangles.

\fig[width=8cm]{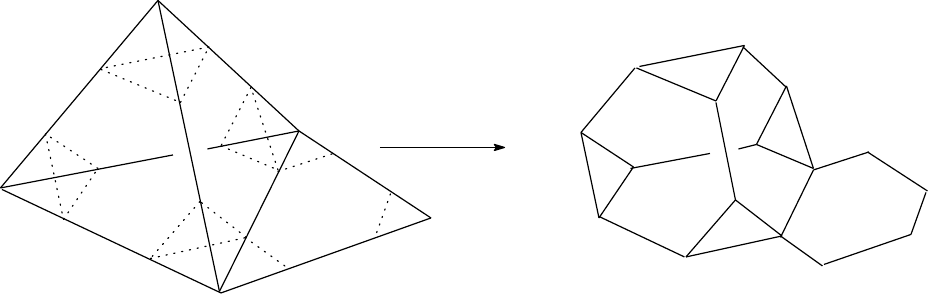}{Truncation of the presentation complex $Q$}

Suppose that $H$ is a finite index normal subgroup of $G$. 
$d$ and $\{ C^{\prime}_{1}, \dots , C^{\prime}_{m} \}$ are as above. 
Let $\widetilde{Q}$ be the covering of $Q^{\prime \prime}$ corresponding to $H$. 
Let $\widetilde{K}_{1}, \dots , \widetilde{K}_{m}$ be the components 
of covering of $K$ 
in $\widetilde{Q}$ corresponding to $\{ C^{\prime}_{1}, \dots , C^{\prime}_{m} \}$. 
Each covering $\widetilde{K}_{i} \to K$ has degree $d/m$. 
In order to construct a presentation complex $\widetilde{Q} ^{\prime}$ for $H$, 
we contract simplices of $\widetilde{K}_{i} \subset \widetilde{Q}$ 
in the following manner. 

We describe the way of contraction on the universal covering of $K$. 
We regard $\pi _{1} (K) = C$ as a lattice in $\R ^{r}$. 
Take a reduced basis of $\pi _{1} (\widetilde{K}_{i}) (< \pi _{1} (K))$. 
Let $F$ be the fundamental domain of $\pi _{1} (\widetilde{K}_{i})$
defined by this reduced basis. 
We contract simplices in the interior of $F$ 
into a point.

\fig[width=8cm]{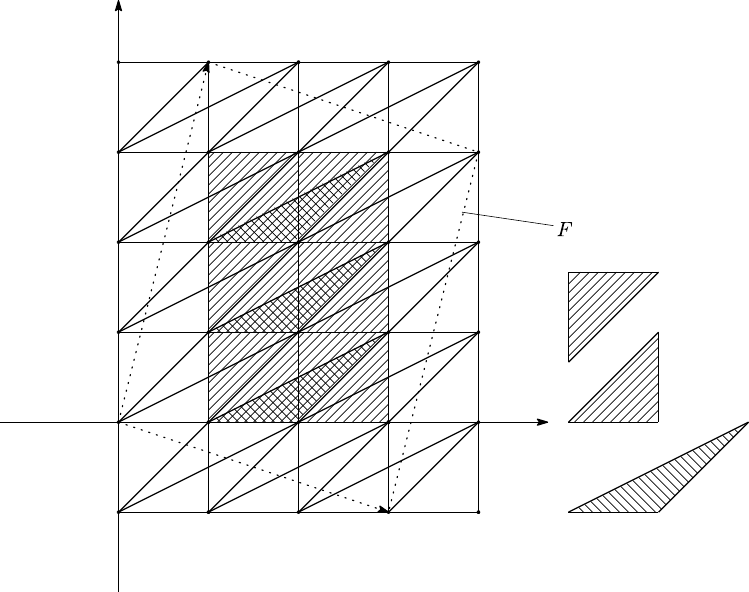}{Contraction of simplices in $F$}

We give an example in Figure \ref{fig:spl-contraction}.  
Suppose $z_{1} = (1, 0), Z_{2} = (0,1)$ and $z^{\prime}_{1} = (2, 1)$. 
The 2-complex $K$ consists of three triangles corresponding to the commutator 
$[z_{1}, z_{2}]$ and $z^{\prime}_{1} = z_{1}^{2}z_{2}$. 
Now let $((3,-1), (1,4))$ be taken as a basis of 
a lattice $\pi _{1} (\widetilde{K}_{i})$. 
Then we contract 15 triangles whose projection is in the interior of $F$.

This construction does not change the fundamental group of $\widetilde{Q}$. 
(If $r \geq 3$, this construction may change the homotopy type of $\widetilde{Q}$.) 
Thus we obtain a presentation complex $\widetilde{Q} ^{\prime}$ for $H$. 

The number of the triangles of $\widetilde{Q} ^{\prime}$ is at most 
\[
d \cdot T(G;C) + m(e+f), 
\]
where 
\begin{align*}
e &= e_{11} + \dots + e_{1r} + e_{21} + \dots + e_{2s}, \\
f &= f_{1} + f_{21} + \dots + f_{2s}, 
\end{align*}
and $e_{1j}$ and $e_{2k}$ are the numbers of the edges of $\widetilde{Q} ^{\prime}$ 
which derive from $\gamma _{j}$ and $\gamma ^{\prime}_{k}$, 
$f_{1}$ is the number of the triangles of $\widetilde{Q} ^{\prime}$ 
which derive from ones corresponding to the commutators $[x_{i}, x_{j}]$, and 
$f_{2k}$ is the number of the triangles of $\widetilde{Q} ^{\prime}$ 
which derive from ones corresponding to the presentation of $z^{\prime}_{k}$ 
by $z_{j}$'s. 
$d \cdot T(G;C) + me$ triangles of $\widetilde{Q} ^{\prime}$ derive from the hexagons 
of $Q^{\prime}$ and $mf$ triangles of $\widetilde{Q} ^{\prime}$ derive from the triangles 
of $K$. 

If the edges and triangles are not contracted by the above construction, 
they are near the boundary of $F$ in the above picture. 
Hence there exists a constant $\delta _{r} >0$ such that the followings hold: 
\begin{align*}
e_{1j} & \leq  \delta _{r} \vol (\partial F), &
 & e_{2k} \leq b_{k} \delta _{r} \vol (\partial F), \\ 
f_{1} & \leq  r(r-1) \delta _{r} \vol (\partial F), &
 & f_{2k} \leq (b_{k}-1)e_{2k}, 
\end{align*}
where $\vol (\partial F)$ is the surface area of $F$ 
with respect to the standard Euclidean metric of $\R ^{r}$. 
Therefore 
\begin{align*}
\sT (G) \leq \frac{T(H)}{d} & \leq T(G;C) + \frac{m}{d} (e+f) \\ 
& \leq T(G;C) + (r^{2} + \sum_{k=1}^{s}b_{k}^{2}) \delta _{r} \cdot 
\frac{\vol (\partial F)}{\vol (F)}. 
\end{align*}
Since $G$ is residually finite and $F$ is defined by a reduced basis, 
there is a normal subgroup $H$ of $G$ such that 
$\vol (\partial F) / \vol (F)$ is arbitrarily small. 
\end{proof}

\blank

Cooper~\cite{cooper1999volume} showed that $\vol (M) < \pi \cdot T(M)$ 
for a closed hyperbolic 3-manifold $M$. 
The isoperimetric inequality 
by Agol and Liu~\cite[Lemma 4.4]{agol2012presentation} implies that 
this inequality also holds for a cusped hyperbolic 3-manifold. 
Delzant and Potyagailo~\cite{delzant2013complexity} 
remarked that a converse inequality does not hold, 
namely, the infimum of $\vol (M) / T(M)$ 
for the hyperbolic 3-manifolds is zero. 
Indeed, hyperbolic Dehn surgery \cite[Ch. 4 and 6]{thurston1978geometry} 
gives infinitely many hyperbolic manifolds 
whose presentation length are divergent while their volumes are bounded. 
Delzant and Potyagailo used a relative presentation length 
$T(\pi _1 (M); \mathcal{E})$ to bound the volume from below, 
where $\mathcal{E}$ consists of the elementary subgroups of $\pi _1 (M)$ 
whose translation length are less than a Margulis number. 
They also showed that $\vol (M) \leq \pi \cdot T(\pi _1 (M); \mathcal{E})$ 
\cite[Theorem B]{delzant2013complexity}. 
In particular $\vol (M) \leq \pi \cdot T(M; \partial M)$. 
We use the stable presentation length to bound the volume 
instead of $T(\pi _1 (M); \mathcal{E})$. 
Cooper's inequality immediately implies that $\vol (M) \leq \pi \cdot \sT (M)$. 
A converse estimate holds for the stable presentation length. 

\begin{prop}
\label{prop:volvsspl}
The infimum of $\vol (M) / \sT (M)$ for the hyperbolic 3-manifolds is positive. 
\end{prop}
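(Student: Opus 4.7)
The plan is to combine Cooper's inequality $\vol(M) \le \pi \sT(M)$ (which in any case only gives the easy direction $\sT(M) \ge \vol(M)/\pi$) with a reverse inequality of the form $\sT(M) \le C \cdot \vol(M)$. I would obtain the reverse inequality by routing through the stable complexity $c_{\infty}(M)$, for which a comparison with $\vol$ is already available.

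\textbf{Closed case.}
If $N$ is a closed hyperbolic $3$-manifold and $\tau$ is any triangulation of $N$ with $n$ tetrahedra, then $\tau$ has $2n$ triangular $2$-cells and its $2$-skeleton is a (multi-vertex) presentation complex for $\pi_{1}(N)$, so $T(N) \le 2\sigma(N)$. Applied to every finite cover of $M$ this yields $\sT(M) \le 2\sigma_{\infty}(M)$. Every finite cover of a closed hyperbolic $3$-manifold is again closed hyperbolic and so avoids Matveev's exceptional list, whence $\sigma(N)=c(N)$ on all such covers and $\sigma_{\infty}(M) = c_{\infty}(M)$. The Francaviglia--Frigerio--Martelli comparison between stable complexity and simplicial volume for hyperbolic pieces furnishes a constant $C>0$ with $c_{\infty}(M) \le C \vol(M)$, yielding $\sT(M) \le 2C \vol(M)$.

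\textbf{Cusped case.}
For a finite-volume cusped hyperbolic $3$-manifold $M$ I would run the same argument with ideal triangulations in place of triangulations. As observed in the paragraph preceding Theorem \ref{thm:relabshyp}, the $2$-skeleton of an ideal triangulation with $n$ ideal tetrahedra is a relative presentation complex for $(\pi_{1}(M); \pi_{1}(S_{1}), \dots, \pi_{1}(S_{l}))$ with $2n$ triangles, so $T(M;\partial M) \le 2c(M)$, and on passing to finite covers $\sT(M;\partial M) \le 2c_{\infty}(M)$. Theorem \ref{thm:relabshyp} now identifies $\sT(M) = \sT(M;\partial M)$, and the FFM bound closes the estimate: $\sT(M) \le 2C \vol(M)$.

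Combining the two cases gives a uniform lower bound $\vol(M)/\sT(M) \ge 1/(2C) > 0$, which is the claim. The substantive inputs are Theorem \ref{thm:relabshyp} (to convert the relative bound from an ideal triangulation into an absolute one) and the FFM upper bound $c_{\infty} \le C \vol$; the rest is bookkeeping. The main obstacle I anticipate is the cusped case, specifically checking cleanly that the $2$-skeleton of an ideal triangulation fits Definition \ref{dfn:relT} with vertex isotropies exactly the peripheral subgroups $\pi_{1}(S_{i})$, so that Theorem \ref{thm:relabshyp} is legitimately applicable.
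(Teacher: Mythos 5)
Your proof is correct and follows the same overall strategy as the paper: bound $\sT(M)$ by a constant multiple of the stable ($\Delta$-)complexity and then invoke the J{\o}rgensen--Thurston / Francaviglia--Frigerio--Martelli bound $\sigma_{\infty}(M) \leq C \vol(M)$, which is exactly how the paper deduces Proposition \ref{prop:volvsspl} from Corollary \ref{cor:splvscomplexity}. The one place you genuinely diverge is the cusped case: the paper's Proposition \ref{prop:plvscomplexity2} passes to the dual spine of a minimal ideal triangulation and contracts vertices along edges to obtain the \emph{non-relative} bound $T(M) \leq \sigma(M) + 3$ directly, with no need for Theorem \ref{thm:relabshyp}; you instead take the $2$-skeleton of the ideal triangulation as a relative presentation complex (which the paper itself asserts is legitimate in Section \ref{section:hyperbolic}) and then convert the relative bound into an absolute one via Theorem \ref{thm:relabshyp}. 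Your route therefore leans on the most technical theorem of the paper where a purely combinatorial construction suffices, and your constants are multiplicative ($T \leq 2\sigma$, hence $\sT \leq 2\sigma_{\infty}$) where the paper's are additive ($T \leq \sigma + 1$ in the closed case, $T \leq \sigma + 3$ in the cusped case, hence $\sT \leq \sigma_{\infty}$ after stabilizing); for the stated proposition either is ample. The concern you flag about verifying that the $2$-skeleton of an ideal triangulation satisfies Definition \ref{dfn:relT} with vertex isotropies the peripheral subgroups is reasonable --- the paper takes this for granted --- and it is one more reason the dual-spine argument is the cleaner of the two.
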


In order to show this, we mention a connection between the presentation length 
and the complexity of a 3-manifold. 
For a closed 3-manifold $M$, the $\Delta$-complexity (or Kneser complexity) 
$\sigma (M)$ is defined as the minimal number of tetrahedra 
in a triangulation of $M$. 
$\sigma (M)$ is also defined for a cusped finite volume hyperbolic 3-manifold $M$ 
by ideal triangulations. 
The complexity $c(M)$ by Matveev~\cite{matveev1990complexity} 
is the minimal number of vertices in a simple spine of $M$. 
It holds that $\sigma (M) = c(M)$ if $M$ is irreducible 
and not $S^3 , \mathbb{RP}^3$ or the lens space $L(3,1)$, 
in particular, if $M$ is a hyperbolic 3-manifold 
\cite[Theorem 5]{matveev1990complexity}. 

The stable $\Delta$-complexity $\sigma _{\infty} (M)$ and 
the stable complexity $c_{\infty} (M)$ 
of a 3-manifold $M$ are respectively defined as 
$\inf \sigma (N)/d$ and $\inf c(N)/d$ 
by taking the infimum over all the finite coverings $N$ of $M$, 
where $d$ is the degree of the covering. 
It holds that $\sigma _{\infty} (M) = c_{\infty} (M)$ 
if $M$ is a hyperbolic 3-manifold. 
$c_{\infty} (M)$ vanishes for a Seifert 3-manifold $M$, 
and $c_{\infty}$ has additivity 
for the prime decomposition and the JSJ decomposition. 

\blank

\begin{prop}
\label{prop:plvscomplexity1}
For a closed 3-manifold $M$, it holds that $T(M) \leq \sigma (M) +1$. 
\end{prop}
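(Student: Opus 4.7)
The plan is to begin with a triangulation $\mathcal{T}$ of $M$ realizing $\sigma(M) = t$ and build from it a presentation complex for $\pi_{1}(M)$ with only $t+1$ triangular 2-cells. Write $v$, $e$, $f$, $t$ for the numbers of vertices, edges, triangles, and tetrahedra of $\mathcal{T}$; then $f = 2t$ (every triangle is a face of exactly two tetrahedra) and $\chi(M) = 0$ gives $e = v + t$.

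First I would puncture $M$ by deleting an open ball $B$ from the interior of one fixed tetrahedron $\tau_{1}$. By van Kampen the fundamental group is unchanged, and $M \setminus B$ deformation retracts onto $X = M^{(2)} \cup \tau_{2} \cup \cdots \cup \tau_{t}$, since $\tau_{1}$ minus an interior ball retracts onto its boundary $2$-sphere, which already lies in $M^{(2)}$.

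Next I would collapse the remaining tetrahedra one by one through free faces. Consider the dual graph $\Gamma^{*}$ of $\mathcal{T}$, whose vertices are tetrahedra and whose edges are triangles of $\mathcal{T}$; it is connected because $M$ is. Pick a spanning tree of $\Gamma^{*}$ rooted at $\tau_{1}$ and, traversing it in BFS order, collapse each $\tau_{j}$ ($j \geq 2$) through the triangle that is the tree-edge from $\tau_{j}$ to its already-collapsed parent. At the moment of the collapse that triangle lies on $\tau_{j}$ and on no other remaining tetrahedron, so it really is a free face. The resulting $2$-complex $Y$ has $v$ vertices, $v+t$ edges, and $2t - (t-1) = t+1$ triangles, and $\pi_{1}(Y) \cong \pi_{1}(M)$.

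Finally, a second collapse of a maximal tree (of $v-1$ edges) in the $1$-skeleton of $Y$ yields a presentation complex $Z$ for $\pi_{1}(M)$ with a single vertex, $t+1$ edges, and $t+1$ 2-cells whose boundary words have length at most $3$. Each such 2-cell contributes at most $3 - 2 = 1$ to the sum in Definition \ref{dfn:T}, giving $T(M) \leq t + 1 = \sigma(M) + 1$. The only delicate step is verifying that a free face is available at every tetrahedron collapse, which is exactly what the BFS ordering along the dual spanning tree ensures; this standard Matveev-style passage from a triangulation to a simple spine is what brings the count down from the naive $2t$ obtained by using the full $2$-skeleton.
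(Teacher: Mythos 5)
Your proposal is correct and follows essentially the same route as the paper: start from the $2$-skeleton of a minimal triangulation (with $2\sigma(M)$ triangles) and delete one triangle per tetrahedron along a spanning tree of the dual graph, leaving $\sigma(M)+1$ triangles without changing $\pi_1$. Your version just makes explicit, via free-face collapses in BFS order, the step the paper compresses into ``the complement of the remaining $2$-complex is a union of $3$-balls.''
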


\begin{proof}
We take a minimal triangulation of $M$. 
Consider the 2-skeleton $P_0$ of this triangulation. 
$P_0$ has $2\sigma (M)$ triangles. 
Since a 2-complex $P$ in $M$ has a fundamental group isomorphic to $\pi _1 (M)$ 
as long as $M \setminus P$ consists of 3-balls, 
We can remove $(\sigma (M) -1)$ triangles from $P_0$ 
without changing the fundamental group. 
Therefore we obtain a presentation complex for $\pi _1 (M)$ 
with $(\sigma (M) +1)$ triangles. 
\end{proof}

\begin{prop}
\label{prop:plvscomplexity2}
For a cusped finite volume hyperbolic 3-manifold $M$, 
it holds that $T(M) \leq \sigma (M) +3$. 
\end{prop}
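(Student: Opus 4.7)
The plan is to adapt the argument of Proposition \ref{prop:plvscomplexity1} to ideal triangulations and then patch up the cusp subgroups. Take a minimal ideal triangulation $\mathcal{T}$ of $M$ with $\sigma(M)$ ideal tetrahedra, and consider its 2-skeleton $P_0$. Viewed as an orbihedron whose $k$ ideal vertices carry the cusp isotropies $\pi_1(T_i) \cong \Z^2$, the complex $P_0$ is a relative presentation complex for $(\pi_1(M); \pi_1(T_1), \ldots, \pi_1(T_k))$ with $2\sigma(M)$ triangles. Indeed, each ideal tetrahedron has an open 3-ball interior, and the 3-cells attach to $P_0$ without changing the orbifold fundamental group.

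Next, I would run the closed-case reduction on $P_0$: the open interiors of the $\sigma(M)$ ideal tetrahedra partition $M \setminus P_0$ into $\sigma(M)$ open 3-balls. Fix a spanning tree in the dual graph (vertices are tetrahedra, edges are the faces shared by two tetrahedra) and delete the $\sigma(M) - 1$ triangles corresponding to spanning-tree edges. Each deletion merges two adjacent open 3-ball regions into a single open 3-ball, so the resulting complex $P'$ still has $M \setminus P'$ a single open 3-ball, hence $\pi_1^{\mathrm{orb}}(P') = \pi_1(M)$. This produces a relative presentation complex with $\sigma(M) + 1$ triangles and gives $T(M; \partial M) \leq \sigma(M) + 1$.

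Finally, to convert to an absolute presentation I would kill each orbi-vertex by attaching the standard 2-triangle presentation complex for $\Z^2 = \langle a, b \mid [a,b]\rangle$ (the commutator square subdivided into 2 triangles, realizing $T(\Z^2) = 2$). In the case of a single cusp this adds exactly 2 triangles, yielding a presentation complex for $\pi_1(M)$ with $\sigma(M) + 3$ triangles, as required.

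The main obstacle is verifying that the closed-case merging argument goes through in the cusped setting, despite the ideal tetrahedra being non-compact. The key observation is that each ideal tetrahedron's open interior is homeomorphic to an open 3-ball, and the union of two such 3-balls glued across an open ideal triangle is still a single open 3-ball, so the spanning-tree collapse works exactly as in the closed case.
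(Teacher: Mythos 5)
Your first two steps (the $2$-skeleton of the ideal triangulation as a relative presentation complex, and the spanning-tree deletion giving $T(M;\partial M)\leq \sigma(M)+1$) are reasonable, but the final step is a genuine gap. Converting a relative presentation complex for $(\pi_1(M);\pi_1(T_1),\dots,\pi_1(T_k))$ into an absolute one is not a matter of wedging on the two-triangle presentation complex of $\Z^2$ at each orbi-vertex. The underlying space $|P'|$ has $\pi_1(|P'|)\cong\pi_1(M)/\langle\langle \pi_1(T_1),\dots,\pi_1(T_k)\rangle\rangle$, and attaching a disjoint torus complex at the vertex would only produce a free product with $\Z^2$; to recover $\pi_1(M)$ you must identify the new generators $a,b$ with the peripheral loops already present in the link of the ideal vertex. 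That link is the $1$-skeleton of the induced triangulation of the cusp torus, whose fundamental group is free of large rank surjecting onto $\Z^2$, so you must add one relation (hence at least one triangle) for essentially every edge of the cusp triangulation, on the order of $3\sigma(M)$ extra triangles rather than $2$. This is exactly the difficulty addressed in the second half of the proof of Theorem \ref{thm:relabs}, where the conversion cost is $r(r-1)+\sum_k(b_k-1)$ and is only controlled after passing to deep finite covers; if the conversion cost were $2$ per cusp, that argument would be unnecessary. Even granting your count, your bound is $\sigma(M)+1+2k$ for $k$ cusps, which exceeds $\sigma(M)+3$ whenever $k\geq 2$, so the statement would remain unproved for multi-cusped manifolds.

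The paper sidesteps the relative-to-absolute issue entirely by working with the \emph{dual spine} $P_0$ of the ideal triangulation rather than its $2$-skeleton. The spine is a compact $2$-complex onto which the compact manifold $M$ deformation retracts, so $\pi_1(P_0)\cong\pi_1(M)$ with no orbi-vertices to repair. It has $\sigma(M)$ vertices, $2\sigma(M)$ edges and $\sigma(M)$ polygonal $2$-cells with total boundary length $6\sigma(M)$ (each edge lies on three $2$-cells), hence $4\sigma(M)$ triangles after subdivision; contracting a spanning tree of $\sigma(M)-1$ edges shortens the total boundary length by $3(\sigma(M)-1)$ and yields a one-vertex presentation complex with $(3\sigma(M)+3)-2\sigma(M)=\sigma(M)+3$ triangles. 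If you want to salvage your approach, you would need to replace the last step by the truncation-and-filling argument of Theorem \ref{thm:relabs} and control the cusp triangulations explicitly, which is considerably more work than the dual-spine count.
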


\begin{proof}
We take a minimal ideal triangulation of $M$. 
Consider the dual spine $P_0$ of this triangulation. 
$P_0$ has $\sigma (M)$ 2-cells, $2\sigma (M)$ edges and $\sigma (M)$ vertices. 
This $\sigma (M)$ 2-cells can be decomposed into $4\sigma (M)$ triangles. 
We contract $(\sigma (M) -1)$ vertices along edges. 
Since every edge of $P_0$ is incident on three triangles, 
we obtain a presentation complex of $\pi _1 (M)$ 
with $(\sigma (M) +3)$ triangles. 
\end{proof}

Since the fundamental group of 3-manifold is 
residually finite \cite{hempel1987residual}, 
$M$ admits arbitrarily large finite covering if $\pi _1 (M)$ is infinite. 
This implies the following corollary. 

\begin{cor}
\label{cor:splvscomplexity}
If $M$ is a closed 3-manifold or a finite volume hyperbolic 3-manifold, 
it holds that $\sT (M) \leq \sigma _{\infty}(M)$. 
\end{cor}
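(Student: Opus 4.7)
The plan is to combine the inequalities $T(M) \leq \sigma(M)+c$ (with $c=1$ in the closed case by Proposition~\ref{prop:plvscomplexity1} and $c=3$ in the cusped hyperbolic case by Proposition~\ref{prop:plvscomplexity2}) with a two-step covering argument to absorb the additive constant.

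First I would dispose of the trivial case in which $\pi_{1}(M)$ is finite. This can only happen for a closed $M$, and then taking the trivial subgroup $\{1\}$ as a finite index subgroup of $\pi_{1}(M)$ gives $\sT(M) \leq T(\{1\})/|\pi_{1}(M)| = 0$, so the inequality $\sT(M) \leq \sigma_{\infty}(M)$ holds trivially. From now on assume $\pi_{1}(M)$ is infinite. Since $\pi_{1}(M)$ is residually finite \cite{hempel1987residual}, $M$ admits finite covers of arbitrarily large degree, and the same is true of any finite cover of $M$.

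Now fix $\epsilon>0$. By the definition of $\sigma_{\infty}$, there is a finite cover $N_{0}\to M$ of some degree $d_{0}$ with
\[
\frac{\sigma(N_{0})}{d_{0}} < \sigma_{\infty}(M) + \tfrac{\epsilon}{2}.
\]
Since $\pi_{1}(N_{0})$ is again residually finite and infinite, I can choose a further finite cover $N_{1}\to N_{0}$ of degree $d_{1}$ with $d_{1}$ as large as we wish. Because $\sigma$ is an upper volume, $\sigma(N_{1}) \leq d_{1}\cdot\sigma(N_{0})$. Note that if $M$ is closed then so is $N_{1}$, and if $M$ is cusped hyperbolic then so is $N_{1}$, so the appropriate case of Proposition~\ref{prop:plvscomplexity1}/\ref{prop:plvscomplexity2} applies to $N_{1}$, giving
\[
T(N_{1}) \;\leq\; \sigma(N_{1}) + c \;\leq\; d_{1}\cdot\sigma(N_{0}) + c,
\]
where $c\in\{1,3\}$. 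Dividing by the total degree $d_{0}d_{1}=[\pi_{1}(M):\pi_{1}(N_{1})]$ and using the definition of $\sT$,
\[
\sT(M) \;\leq\; \frac{T(N_{1})}{d_{0}d_{1}} \;\leq\; \frac{\sigma(N_{0})}{d_{0}} + \frac{c}{d_{0}d_{1}} \;<\; \sigma_{\infty}(M) + \tfrac{\epsilon}{2} + \frac{c}{d_{0}d_{1}}.
\]
Choosing $d_{1}$ large enough that $c/(d_{0}d_{1}) < \epsilon/2$ yields $\sT(M) < \sigma_{\infty}(M)+\epsilon$, and letting $\epsilon\to 0$ gives $\sT(M)\leq\sigma_{\infty}(M)$.

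There is no real obstacle here; the only subtlety is recognizing that Propositions~\ref{prop:plvscomplexity1}--\ref{prop:plvscomplexity2} do not directly pass to the stable invariants when applied to $M$ itself, because the additive constant survives division by the fixed degree $1$. The two-step cover (first approximate $\sigma_{\infty}$, then pass to a very deep subcover) is precisely what makes the additive constant negligible while preserving the approximation of $\sigma_{\infty}$.
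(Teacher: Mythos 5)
Your proof is correct and fills in exactly the argument the paper leaves implicit: it combines Propositions \ref{prop:plvscomplexity1} and \ref{prop:plvscomplexity2} with the existence (via residual finiteness) of arbitrarily large further covers to make the additive constant negligible. This is the same approach as the paper, just written out in full.
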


The stable complexity of a hyperbolic 3-manifold is bounded from above and below 
by constant multiples of its volume. 
For a finite volume hyperbolic 3-manifold $M$, 
it holds that $\vol (M) \leq V_3 \sigma (M)$, 
where $V_3$ is the volume of ideal regular tetrahedron, which is the maximum 
of the volumes of geodesic tetrahedra in the hyperbolic 3-space. 
This implies that $\vol (M) \leq V_3 \sigma _{\infty} (M)$. 
Conversely, there exists a constant $C>0$ 
such that $\sigma _{\infty} (M) \leq C \vol (M)$ holds 
for any hyperbolic manifold $M$. 
This follows from the fact by J\o rgensen and Thurston 
that a thick part of a hyperbolic 3-manifold can be decomposed 
by uniformly thick tetrahedra. 
Proofs of this fact are given by Francaviglia, Frigerio 
and Martelli~\cite[Proposition 2.5]{francaviglia2012stable} 
in the case $M$ is closed, and by Breslin \cite{breslin2009thick} and 
Kobayashi and Rieck~\cite{kobayashi2011linear} 
otherwise. 
Proposition \ref{prop:volvsspl} follows from this inequality 
and Corollary \ref{cor:splvscomplexity}. 

\blank

We conjecture an equality between the stable presentation length 
and the stable complexity. 

\begin{conj}
\label{conj}
For a finite volume hyperbolic 3-manifold $M$, it holds that 
\[\sT (M) = \frac{1}{2} \sigma _{\infty}(M).\] 
\end{conj}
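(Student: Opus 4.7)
The plan is to prove the two halves $\sT(M) \leq \tfrac{1}{2}\sigma_\infty(M)$ and $\sT(M) \geq \tfrac{1}{2}\sigma_\infty(M)$ separately. The trivial direction $\sT(M) \leq \sigma_\infty(M)$ from Corollary \ref{cor:splvscomplexity} already gives the correct order of magnitude, so the task is to pin down the constant $\tfrac{1}{2}$. Throughout, for a cusped $M$ we are free to identify $\sT(M)$ with $\sT(M;\partial M)$ by Theorem \ref{thm:relabshyp}, and $\sigma_\infty(M)$ with $c_\infty(M)$ since $M$ is hyperbolic.

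For the upper bound, I would start from a near-optimal (ideal) triangulation $\tau$ of a deep finite cover $\widetilde{M} \to M$ with $n$ tetrahedra, so that $n/[\widetilde{M}:M]$ approaches $\sigma_\infty(M)$. The construction of Proposition \ref{prop:plvscomplexity1} or \ref{prop:plvscomplexity2} yields only a presentation complex with $n+O(1)$ triangles, and the extra factor of $\tfrac{1}{2}$ must come from further collapsing. Specifically, the 2-skeleton of $\tau$ has $2n$ triangles, and removing a spanning tree of the dual tetrahedron-graph leaves $n+1$. To gain the second factor of two, one would pair up tetrahedra of $\tau$ along common 2-faces (a perfect matching in the 4-regular dual graph) and argue that removing one more triangle per paired block does not change $\pi_1$. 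Existence of such a globally consistent matching on a sufficiently deep cover should be argued combinatorially, possibly using residual finiteness of $\pi_1(M)$ to fix parity obstructions and to make local collapsibility patterns line up.

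For the lower bound, given a relative presentation complex $Q$ for $(\pi_1(\widetilde{M});\pi_1(\partial\widetilde{M}))$ with $T$ triangles, I would aim to inflate $Q$ into an ideal triangulation of $\widetilde{M}$ with at most $2T+o(T)$ tetrahedra. The natural route is to embed $Q$ as a spine of $\widetilde{M}$ (punctured or cusped) and promote it to a \emph{simple} spine by local moves at the cost of only a sublinear correction in $T$; its dual cell decomposition is then an (ideal) triangulation with as many tetrahedra as the simple spine has vertices. Since each triangle of a generic triangular simple spine carries two fourfold intersection points on its interior, one expects exactly the ratio $2$ between tetrahedra of the dual and triangles of the spine.

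The principal obstacle is the lower bound, because a minimal triangular presentation complex is a priori far from being a simple spine, and there is no intrinsic reason its incidence statistics should match those of a dual triangulation up to the sharp constant $2$. Heuristically the conjecture parallels the 3-dimensional Ehrenpreis question discussed in the introduction: on deep covers built from Kahn--Markovic style near-isometric pieces one expects both $\sT(M)$ and $\sigma_\infty(M)$ to be computed by the same near-regular ideal tetrahedral model, with a universal combinatorial ratio of $2$. Making this rigorous in either direction seems to require genuinely new geometric input beyond the methods of this paper, which is why the statement is left as a conjecture; the explicit examples in the appendix serve as supporting evidence and as sanity checks that the constant $\tfrac{1}{2}$ is indeed the correct one.
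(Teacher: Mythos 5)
This statement is stated in the paper as a \emph{conjecture}: the paper gives no proof of it, only supporting evidence (the examples in the Appendix, which establish upper bounds of the form $\sT(M)\leq \tfrac12\sigma_{\infty}(M)$ for a few specific arithmetic link complements via explicit presentations of covers in $T^2\times[0,1]$). Your sketch is candid that it does not close either direction, and indeed both of its key steps have concrete gaps. For the upper bound, the proposed extra collapsing fails: starting from the 2-skeleton of a triangulation with $n$ tetrahedra ($2n$ triangles), removing the $n-1$ faces dual to a spanning tree of the dual graph leaves a complement which is a single open 3-ball, and this is exactly what makes Proposition \ref{prop:plvscomplexity1} work. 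Removing any further face merges a ball with itself and turns the complement into a handlebody of positive genus, after which $\pi_1$ of the remaining 2-complex is no longer $\pi_1(M)$ (it acquires free factors). So ``one more triangle per matched pair'' cannot be removed without changing the fundamental group, and no matching argument repairs this. The paper's own examples get the factor $\tfrac12$ from a different mechanism entirely: writing down presentations whose triangle count is about half the tetrahedron count of the manifold, not by collapsing a triangulation.

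For the lower bound, the step ``embed $Q$ as a spine of $\widetilde{M}$ and promote it to a simple spine with sublinear correction'' is precisely the open problem. The paper remarks that $T(M)\geq\sigma(M)/2$ holds \emph{if} a minimal presentation complex for $\pi_1(M)$ injects into $M$; a minimal presentation complex is an abstract 2-complex with no a priori embedding in $M$, and even granting a $\pi_1$-isomorphism map to $M$ there is no known way to replace it by an embedded simple spine without an uncontrolled increase in the number of cells. So neither inequality is established, which is consistent with the statement being left as a conjecture; your discussion is a reasonable account of why, but it should not be presented as a proof strategy that merely needs details filled in.
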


We will give some examples supporting that 
$\sT (M) \leq \sigma _{\infty}(M)/2$
in Section~\ref{subsection:bianchi}. 
It holds that $T(M) \geq \sigma (M)/2$ 
if a minimal (relative) presentation complex for $\pi _1 (M)$ injects to $M$. 
This is because $M$ can be decomposed into $2T(M)$ tetrahedra. 

If Conjecture \ref{conj} holds, 
$\sT (M) = (1/2V_3) \vol (M)$ for a hyperbolic 3-manifold $M$ 
which is commensurable with the figure-eight knot complement $M_1$. 
Indeed, $\sigma _{\infty}(M_1) =2$ 
since $M_1$ can be decomposed into two ideal regular tetrahedra. 
Conjecture \ref{conj} implies a best possible refinement of Cooper's inequality 
$\vol (M) < 2V_3 \cdot T(M)$.

\section{Additivity of stable presentation length}
\label{section:additivity}

We will show additivity of the stable presentation length of 3-manifold groups 
in the same manner as the simplicial volume. 
The proofs of Theorem \ref{thm:additivityfree} and \ref{thm:additivityJSJ} 
are similar. 
Let $G$ be a finitely presentable group and let $\{ G_{i} \}$ be decomposed groups 
of $G$. 
We will construct a presentation complex for a finite index subgroup of $G$ 
by gluing finite coverings of presentation complexes for $G_{i}$. 
This implies an inequality between $\sT (G)$ and $\sum_{i} \sT (G_{i})$. 
In order to show the converse inequality, we will obtain presentation complexes 
for finite index subgroups of $G_{i}$'s by decomposing a finite covering of 
a presentation complex for $G$. 

We first show additivity for a free product. 
This holds for any finitely presentable group. 

\begin{thm}
\label{thm:additivityfree}
For finitely presentable groups $G_1$ and $G_2$, it holds that 
\[
\sT (G_1 * G_2) = \sT (G_1) + \sT (G_2). 
\]
\end{thm}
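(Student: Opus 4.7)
The plan is to prove both inequalities separately by combining Corollary \ref{cor:free}, which gives the non-stable additivity $T(A*B) = T(A) + T(B)$, with a covering-space analysis of the wedge presentation complex $P = P_1 \vee P_2$ associated with $G_1 * G_2$, where $P_i$ is a minimal presentation complex for $G_i$.

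For the upper bound $\sT(G_1 * G_2) \leq \sT(G_1) + \sT(G_2)$, I would fix finite-index subgroups $H_i \leq G_i$ of index $d_i$ with minimal covers $\tilde{P}_i \to P_i$, and build a transitive permutation representation $G_1 * G_2 \to S_{d_1} \times S_{d_2} \subset S_{d_1 d_2}$ by letting $G_1$ act on the first coordinate via the coset action on $G_1/H_1$ and trivially on the second, and symmetrically for $G_2$. The point stabilizer is an index-$d_1 d_2$ subgroup $H \leq G_1 * G_2$ whose associated cover $\tilde P \to P$ restricts, over $P_1$, to $d_2$ disjoint copies of $\tilde P_1$, and over $P_2$, to $d_1$ disjoint copies of $\tilde P_2$, glued along their vertices. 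The triangle count of $\tilde P$ is then $d_2 T(H_1) + d_1 T(H_2)$, so $T(H)/(d_1 d_2) \leq T(H_1)/d_1 + T(H_2)/d_2$, and taking infima over $H_1, H_2$ yields the bound.

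For the lower bound $\sT(G_1 * G_2) \geq \sT(G_1) + \sT(G_2)$, I would start with an arbitrary finite-index $H \leq G_1 * G_2$ of index $d$ with cover $\tilde P \to P$. The restriction $\tilde P|_{P_1}$ decomposes as a disjoint union of connected covers of $P_1$ of degrees $d_{1,1}, \dots, d_{1,k_1}$ summing to $d$, whose fundamental groups are finite-index subgroups $H_{1,j} \leq G_1$ (up to conjugacy) of the corresponding indices; symmetrically for $P_2$. Applying van Kampen to $\tilde P$, viewed as the union of these components glued at isolated vertices along a bipartite incidence graph $\Gamma$, yields a free-product decomposition $H \cong \bigl(*_{i,j} H_{i,j}\bigr) * F$, where $F$ is the free group of rank $b_1(\Gamma)$. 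Iterating Corollary \ref{cor:free} and using $T(F) = 0$ gives $T(H) = \sum_{i,j} T(H_{i,j})$. By the definition of $\sT$, each $T(H_{1,j}) \geq d_{1,j} \sT(G_1)$, and summing over $j$ yields $\sum_j T(H_{1,j}) \geq d \sT(G_1)$; similarly for $G_2$. Therefore $T(H)/d \geq \sT(G_1) + \sT(G_2)$, and taking the infimum over $H$ completes the proof.

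The main technical point is the van Kampen / Kurosh decomposition of $\pi_1(\tilde P)$ in the lower bound: one must verify that gluing the connected components of $\tilde P|_{P_1}$ and $\tilde P|_{P_2}$ only at isolated vertices (with no shared edges or faces) really does produce a free product of their fundamental groups together with a free factor coming from the incidence graph. Once this structural observation is granted, the argument is a clean application of Corollary \ref{cor:free}, which is crucially an equality rather than merely an upper bound, so that the contributions of the $H_{i,j}$ can be recombined precisely.
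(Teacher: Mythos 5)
Your proposal is correct and follows essentially the same route as the paper: both directions rest on the free-product decomposition of the fundamental group of a finite cover of $P_1 \vee P_2$ combined with Corollary \ref{cor:free}, and your permutation-representation construction of $H$ yields exactly the cover the paper builds by gluing $d_2$ copies of $\widetilde{P_1}$ to $d_1$ copies of $\widetilde{P_2}$ along vertices; your lower-bound argument is the paper's verbatim. One small correction in the upper bound: the literal triangle count of $\widetilde{P}$ is $d_1 d_2\,(T(G_1)+T(G_2))$, not $d_2\,T(H_1)+d_1\,T(H_2)$, so counting triangles of the cover only gives the unstabilized bound; the inequality $T(H)\leq d_2\,T(H_1)+d_1\,T(H_2)$ should instead be extracted from the decomposition $H\cong H_1^{*d_2}*H_2^{*d_1}*F_k$ via Corollary \ref{cor:free} (as the paper does), or by replacing each copy of $\widetilde{P_i}$ with a minimal presentation complex for $H_i$.
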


\begin{proof}
We will use additivity of presentation length for a free product 
in Corollary \ref{cor:free}. 
Write $G = G_1 * G_2$. 
We first show that $\sT (G) \leq \sT (G_1) + \sT (G_2)$. 
For $i=1,2$, let $P_i$ be presentation complexes for $G_i$. 
Take $d_i$-index subgroups $H_i$ of $G_i$. 
Let $\widetilde{P_i}$ denote the coverings of $P_i$ corresponding to $H_i$. 
Since each $\widetilde{P_i}$ has $d_i$ vertices, 
we can glue $d_2$ copies of $\widetilde{P_1}$ and $d_1$ copies of $\widetilde{P_2}$ 
along the vertices to obtain a $d_{1}d_{2}$-sheet covering $\widetilde{P}$ 
of $P_{1} \vee P_{2}$. 
The wedge sum $P_{1} \vee P_{2}$ is a presentation complex for $G$. 
Then $\pi _1 (\widetilde{P})$ is isomorphic to a free product 
$H_{1}^{*d_{2}} * H_{2}^{*d_{1}} * F_{k}$, where $F_{k}$ is a free group.  
Corollary \ref{cor:free} implies that 
$T(\pi _1 (\widetilde{P})) = d_{2} \cdot T(H_{1}) + d_{1} \cdot T(H_{2})$. 
Therefore 
\[
\sT (G) \leq \frac{T(\pi _1 (\widetilde{P}))}{d_{1}d_{2}} 
 = \frac{T(H_1)}{d_1} + \frac{T(H_2)}{d_2}. 
\]
Since we took $H_1$ and $H_2$ arbitrarily, 
we obtain $\sT (G) \leq \sT (G_1) + \sT (G_2)$. 

\blank

Conversely, we show that $\sT (G_1) + \sT (G_2) \leq \sT (G)$. 
Let $P_{i}$ be as above. 
$P = P_{1} \vee P_{2}$ is a presentation complex for $G$. 
Take a $d$-index subgroup $H$ of $G$. 
Let $\widetilde{P}$ denote the covering of $P$ corresponding to $H$. 
$\widetilde{P}$ is homotopic to 
\[
P_{11} \vee \dots \vee P_{1m} \vee P_{21} \vee \dots \vee P_{2n} 
\vee S^1 \vee \dots \vee S^1, 
\] 
where $P_{ij}$ is a covering of $P_{i}$. 
Let $d_{ij}$ be the degree of the covering $P_{ij} \to P_{i}$. 
Then $\sum_{j=1}^{m} d_{1j} = \sum_{j=1}^{n} d_{2j} = d$. 
Since 
$H= \pi_1 (\widetilde{P})$ is isomorphic to 
\[
\pi_1 (P_{11}) * \dots * \pi_1 (P_{1m}) * \pi_1 (P_{21}) * \dots * \pi_1 (P_{2n}) 
* F_{k}, 
\]
Corollary \ref{cor:free} and Proposition \ref{prop:vol} implies that 
\begin{align*}
T(H) = & T(\pi_1 (P_{11})) + \dots + T(\pi_1 (P_{1m})) 
+ T(\pi_1 (P_{21})) + \dots + T(\pi_1 (P_{2n})) \\
\geq & d_{11} \cdot \sT (\pi_1 (P_{1})) + \dots + d_{1m} \cdot \sT (\pi_1 (P_{1})) \\ 
& + d_{21} \cdot \sT (\pi_1 (P_{2})) + \dots + d_{2n} \cdot \sT (\pi_1 (P_{2})) \\
= &  d \cdot \sT (G_1) + d \cdot \sT (G_2). 
\end{align*}
Therefore $\sT (G_1) + \sT (G_2) \leq \dfrac{T(H)}{d}$. 
Since we took $H$ arbitrarily, 
we obtain $\sT (G_1) + \sT (G_2) \leq \sT (G)$. 
\end{proof}

Before we show additivity for the JSJ decomposition, 
we show that the stable presentation length for a Seifert 3-manifold vanishes. 

\begin{thm}
\label{thm:seifertvanish}
For a compact Seifert 3-manifold $M$, 
\[
\sT (M) = \sT (M; \partial M) =0. 
\]
\end{thm}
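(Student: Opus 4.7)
The plan is to reduce, by successive finite covers, to the case where $M$ is a principal circle bundle over an orientable surface, and then to exhibit a sequence of finite covers whose triangular presentation length grows strictly more slowly than the degree.

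First I would reduce the two equalities to one. By passing to the orientation double cover if necessary and using multiplicativity from Proposition~\ref{prop:vol}, we may assume $M$ is orientable, in which case every boundary component is automatically a torus (a Klein bottle cannot bound an orientable 3-manifold). Since $\pi_1(M)$ is residually finite for any compact 3-manifold and each boundary torus contributes a $\Z^2$ subgroup of rank $\geq 2$, Theorem~\ref{thm:relabs} yields $\sT(M;\partial M) = \sT(M)$, and it suffices to show $\sT(M) = 0$. A classical structure theorem (Seifert, Orlik--Raymond) says that every compact orientable Seifert 3-manifold with infinite fundamental group is finitely covered by a principal circle bundle over an orientable surface; the finite-$\pi_1$ cases are trivial because $\sT(G) \leq T(\{1\})/|G| = 0$. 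Invoking multiplicativity once more, we may assume $M$ itself is a principal circle bundle over an orientable surface $\Sigma_g$, possibly with boundary, with Euler number $e$ (and $e = 0$ automatically whenever $\partial \Sigma_g \neq \emptyset$, since then $\Sigma_g$ retracts to a 1-complex).

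For the main estimate, use the standard presentation
\[
\pi_1(M) = \langle a_1, b_1, \dots, a_g, b_g, t \mid [a_i, t],\ [b_i, t]\ (1 \leq i \leq g),\ [a_1,b_1]\cdots[a_g, b_g]\, t^{-e}\rangle,
\]
whose triangular length is $O(g + |e|)$: each commutator relation triangulates to two triangles, and the surface relator of word length $4g + |e|$ triangulates to $4g + |e| - 2$ triangles. If $e = 0$, the $n$-fold fiber cover $M_n$ is again a trivial circle bundle with the same fundamental group (just a rescaling of the $\Z$-factor), so $T(M_n)$ is bounded while $\deg(M_n \to M) = n$, giving $\sT(M) = 0$ directly. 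If $e \neq 0$, first take an $n$-fold base cover $\Sigma_{g_n} \to \Sigma_g$ with $g_n = n(g-1)+1$; the pullback circle bundle over $\Sigma_{g_n}$ has Euler number $ne$. Then take the $ne$-fold fiber cover of that pullback, yielding a circle bundle $M_n \to \Sigma_{g_n}$ of Euler number $1$ and total degree $n^2 e$ over $M$. The triangular length of $\pi_1(M_n)$ is $O(g_n + 1) = O(n)$, so $T(M_n)/(n^2 e) = O(1/n) \to 0$, and hence $\sT(M) = 0$.

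The one subtlety I expect to watch for is that an $m$-fold fiber cover of a circle bundle exists only when $m$ divides its Euler number; this is precisely why the base cover is performed first, inflating $e$ to $ne$ so that the fiber cover of degree $ne$ becomes available. The quadratic growth of the degree then comfortably overtakes the linear growth of the presentation length, which is the heart of the vanishing statement.
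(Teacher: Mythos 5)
Your proposal is correct and follows essentially the same route as the paper: reduce to an $S^1$-bundle over a surface, kill the bounded case by a self-covering in the fiber direction, and in the closed case combine a degree-$n$ base cover with a fiber cover so that the total degree grows quadratically while the presentation length $8g_n+|e|-2$ grows only linearly. The only cosmetic differences are that you invoke Theorem~\ref{thm:relabs} for the relative statement (where one should quietly set aside the solid torus, whose boundary subgroup is not of rank $2$, though everything vanishes trivially there) where the paper gets it directly from the self-cover via Proposition~\ref{prop:vol}, and that you normalize the Euler number to $1$ rather than back to $e$.
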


\begin{proof}
Since a Seifert 3-manifold can be regarded as an $S^1$-bundle over an 2-orbifold, 
$M$ is covered by an $S^1$-bundle over a surface. 
Hence we can assume $M$ is an $S^1$-bundle over a compact surface. 

If $M$ has boundary, $M$ is a product of $S^1$ and a surface. 
Then $M$ admits a $d$-sheeted covering homeomorphic to $M$ for any $d \leq 1$. 
This implies that $\sT (M) = \sT (M; \partial M) =0$ 
by Proposition \ref{prop:vol}. 

We consider an $S^1$-bundle over a closed surface $\Sigma _g$ of genus $g$. 
Homeomorphic class of an $S^1$-bundle over $\Sigma _g$ is determined 
by the Euler number $e$. 
Let $M(\Sigma _g, e)$ denote the $S^1$-bundle over $\Sigma _g$ of the Euler number $e$. 
Since $\pi_1 (M(S^2, e))$ is finite or isomorphic to $\Z$, 
we have $\sT (M(S^2, e)) =0$. 
Suppose $g \geq 1$. 
$\pi_1 (M(\Sigma _g, e))$ has a presentation 
\[
\langle x_1 , y_1 , \dots , x_g ,y_g , z | [x_1 , y_1] \dots [x_g , y_g] z^{e}, 
[x_i, z], [y_i ,z] \quad (1 \leq i \leq g) \rangle , 
\]
where $x_i , y_i$'s are corresponding to generators of the fundamental group 
of the base surface and $z$ is a generator of the fundamental group 
of the ordinary fiber, and $[x,y]$ denotes the commutator $xyx^{-1}y^{-1}$. 
Therefore 
\[
T(\pi_1 (M(\Sigma _g, e))) \leq 8g +|e| -2.  
\]
For any integer $d \geq 1$, $M(\Sigma _g, e)$ admits $M(\Sigma _{g^{\prime}}, de)$ 
as a $d$-sheeted covering along the base space, where $g^{\prime} = d(g-1) +1$. 
Furthermore, $M(\Sigma _{g^{\prime}}, de)$ admits $M(\Sigma _{g^{\prime}}, e)$ 
as a $d$-sheeted covering along the fiber direction. 
Thus we obtain a $d^2$-sheeted covering 
$M(\Sigma _{g^{\prime}}, e) \to M(\Sigma _g, e)$. 
Hence 
\[
\sT (\pi_1 (M(\Sigma _g, e))) \leq \frac{T(\pi_1 (M(\Sigma _{g^{\prime}}, e)))}{d^2} 
\leq \frac{8(d(g-1)+1) +|e| -2}{d^2}.  
\]
The right hand side converges to zero when $d$ increases. 
\end{proof}

Finally we show additivity for the JSJ decomposition. 

\begin{thm}
\label{thm:additivityJSJ}
Let $M$ be a compact irreducible 3-manifold 
with empty or incompressible tori boundary. 
Suppose $M = M_{1} \cup \dots \cup M_{h}$ is the JSJ decomposition. 
$M_{1}, \dots , M_{h}$ are Seifert or hyperbolic 3-manifolds 
with incompressible tori boundary. 
Then 
\[
\sT (M) = \sT (M_{1}) + \dots + \sT (M_{h}). 
\]
\end{thm}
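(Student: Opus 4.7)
The plan is to adapt the proof of Theorem \ref{thm:additivityfree} to the JSJ setting. The JSJ decomposition yields a graph-of-groups decomposition of $G = \pi_1(M)$ with vertex groups $G_i = \pi_1(M_i)$ and edge groups the rank-$2$ free abelian subgroups corresponding to the JSJ tori. Since $M$ is irreducible and the JSJ tori form an essential surface, Proposition \ref{prop:rigid} guarantees that this decomposition is rigid and reduced, and the same holds after passage to any finite cover. Since 3-manifold groups are residually finite and the edge groups have rank $2$, Theorem \ref{thm:relabs} gives $\sT(M_i) = \sT(M_i; \partial M_i)$ for hyperbolic pieces, while Theorem \ref{thm:seifertvanish} gives $\sT(M_i) = \sT(M_i; \partial M_i) = 0$ for Seifert pieces; similarly $\sT(M) = \sT(M; \partial M)$. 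So it suffices to prove the asserted identity for the relative stable presentation lengths.

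For the lower bound $\sT(M; \partial M) \geq \sum_i \sT(M_i; \partial M_i)$, take a finite index normal subgroup $H < G$ of index $d$ with corresponding cover $\widetilde{M} \to M$. The JSJ tori lift to $\widetilde{M}$, producing a graph-of-groups decomposition of $H$ whose vertex groups $H_{ij}$ are finite index subgroups of the various $G_i$ of indices $d_{ij}$ with $\sum_j d_{ij} = d$ for every $i$. By Proposition \ref{prop:rigid} this decomposition is again rigid and reduced, so Theorem \ref{thm:lowerbound} gives
\[
T(H; \partial) \;\geq\; \sum_{i,j} T\bigl(H_{ij}; \{\text{adjacent torus subgroups}\}\bigr) \;\geq\; \sum_{i,j} d_{ij}\, \sT(M_i; \partial M_i),
\]
where the second inequality is the definition of $\sT(G_i; \partial G_i)$. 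Dividing by $d$ and using $\sum_j d_{ij} = d$ shows $T(H; \partial)/d \geq \sum_i \sT(M_i)$; taking the infimum over $H$ yields the lower bound.

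For the upper bound, fix $\epsilon > 0$. By Theorem \ref{thm:relabs} (or Theorem \ref{thm:seifertvanish} for Seifert pieces) choose for each $i$ a finite index $H_i^0 < G_i$ with $T(H_i^0; \partial)/[G_i : H_i^0] < \sT(M_i) + \epsilon$. The corresponding covers of the pieces cannot immediately be glued along the JSJ tori, since on each shared torus $T_j$ the two boundary lifts realize a priori different sublattices of $\pi_1(T_j) = \Z^2$. To remedy this, observe that Proposition \ref{prop:upvol} implies the ratio $T(\,\cdot\,;\partial)/[G_i : \,\cdot\,]$ is monotone non-increasing under passage to deeper subgroups with the induced boundary family; hence replacing each $H_i^0$ by its intersection with the preimage of a suitable common finite index sublattice of each $\pi_1(T_j)$ produces compatible subgroups $H_i \leq H_i^0$ still satisfying the same bound. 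After replicating each piece to balance the number of boundary components on either side of every JSJ torus, the resulting covers glue to a finite cover $\widetilde{M} \to M$, and Proposition \ref{prop:upperbound} yields $T(\pi_1(\widetilde{M}); \partial)/\deg < \sum_i \sT(M_i) + h\epsilon$; letting $\epsilon \to 0$ completes the proof. The main obstacle is precisely this boundary compatibility: the near-optimal subgroups supplied by Theorem \ref{thm:relabs} for the individual pieces need not see compatible sublattices on each shared JSJ torus, and overcoming this requires combining Theorem \ref{thm:relabs} with the monotonicity furnished by Proposition \ref{prop:upvol} to shrink each piece's subgroup until the boundary data agree while preserving the near-optimal ratio.
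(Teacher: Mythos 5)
Your overall strategy matches the paper's: reduce to relative stable presentation lengths via Theorem \ref{thm:relabs} (and Theorem \ref{thm:seifertvanish}), prove the lower bound by lifting the JSJ decomposition to a finite cover and applying Theorem \ref{thm:lowerbound} together with Proposition \ref{prop:rigid}, and prove the upper bound by gluing near-optimal finite covers of the pieces. The lower-bound half is essentially the paper's argument and is fine, with one small caveat: Theorem \ref{thm:lowerbound} as stated bounds the \emph{absolute} length $T(H)$ from below, not $T(H;\partial)$; since you have already identified $\sT(M)$ with the relative version, you should simply work with $T(H)$ here, as the paper does.

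The genuine gap is in the upper bound, at precisely the step you flag as ``the main obstacle.'' To glue the covers of the pieces along the JSJ tori, you need each cover $N_i \to M_i$ to restrict on every boundary torus to a disjoint union of coverings corresponding to one and the same sublattice of $\Z \times \Z$, with that sublattice (in the paper, $p\Z \times p\Z$) common to all pieces. Your proposed mechanism --- replacing $H_i^0$ by ``its intersection with the preimage of a suitable common finite index sublattice of each $\pi_1(T_j)$'' --- is not defined: there is no homomorphism $G_i \to \pi_1(T_j)$ under which to take a preimage, and intersecting $H_i^0$ with finite index subgroups of $G_i$ does not allow you to prescribe the induced sublattices $gC_jg^{-1} \cap H_i$ on the boundary tori. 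Proposition \ref{prop:upvol} only guarantees that the ratio does not increase once a deeper subgroup is in hand; it does not produce a subgroup with controlled boundary behaviour. The missing ingredient is the existence, for arbitrary finite covers $\widetilde{M_i} \to M_i$, of further covers $N_i \to \widetilde{M_i}$ such that each composite $N_i \to M_i$ is $p$-characteristic for a single $p$ independent of $i$; this is \cite[Proposition 5.7]{francaviglia2012stable}, resting on \cite[Lemma 4.1]{hempel1987residual}, and it is exactly the input the paper invokes at this point. With that lemma cited, the remainder of your upper-bound argument (replicating pieces to balance boundary components, gluing, and applying Proposition \ref{prop:upperbound} and Theorem \ref{thm:relabs}) goes through.
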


\begin{proof}
We remark that the fundamental group of a compact 3-manifold is 
residually finite by Hempel~\cite{hempel1987residual} and the geometrization. 

We first show that $\sT (M) \leq \sT (M_{1}) + \dots + \sT (M_{h})$. 
Take $d_{i}$-sheet coverings $f_{i} \colon \widetilde{M_{i}} \to M_{i}$ 
for $1 \leq i \leq h$. 
Then there exist an integer $p$ independent of $i$ 
and coverings $g_{i} \colon N_{i} \to \widetilde{M_{i}}$ 
such that $f_{i} \circ g_{i} \colon N_{i} \to M_{i}$ is a $p$-characteristic covering, 
i.e. the restriction of the covering on each component of $\partial M_{i}$ is 
the covering corresponding to $p\Z \times p\Z < \Z \times \Z$ 
\cite[Proposition 5.7]{francaviglia2012stable}. 
We can glue copies $N_{ij}$ of $N_{i}$ along boundary to obtain a $d$-sheeted covering 
$f \colon N \to M$. Then $f^{-1} (M_{i}) = N_{11} \cup \dots \cup N_{il_{i}}$. 
Each copy $g_{ij} \colon N_{ij} \to \widetilde{M_{i}}$ of $g_{i}$ 
is a $d/l_{i}d_{i}$-sheeted covering. 
$N = \bigcup_{i,j} N_{ij}$ is the JSJ decomposition. 
Therefore we obtain 
\begin{align*}
T(\pi_1 (N);\{ \pi_1 (\partial N_{ij}) \}) 
\leq & \sum_{i,j} T(N_{ij};\partial N_{ij}) \\
\leq & \sum_{i,j} \frac{d}{l_{i}d_{i}} T(\widetilde{M_{i}}; \partial \widetilde{M_{i}}) 
= \sum_{i} \frac{d}{d_{i}} T(\widetilde{M_{i}}; \partial \widetilde{M_{i}}) 
\end{align*}
by Proposition \ref{prop:upperbound}. 
Hence 
\[
\sT (\pi_1 (M);\{ \pi_1 (\partial M_{i}) \}) 
\leq \frac{T(\pi_1 (N);\{ \pi_1 (\partial N_{ij}) \})}{d}
\leq \sum_{i} \frac{T(\widetilde{M_{i}}; \partial \widetilde{M_{i}})}{d_{i}}. 
\]
Since we took $\widetilde{M_{i}}$ arbitrarily, 
we obtain 
\[
\sT (\pi_1 (M);\{ \pi_1 (\partial M_{i}) \}) 
\leq \sum_{i} \sT(\widetilde{M_{i}}; \partial \widetilde{M_{i}}). 
\]
Furthermore, $\sT (M) = \sT (\pi_1 (M);\{ \pi_1 (\partial M_{i}) \})$ 
and $\sT (\widetilde{M_{i}}) = \sT(\widetilde{M_{i}}; \partial \widetilde{M_{i}})$ 
by Theorem \ref{thm:relabs}.

Conversely, we show that $\sT (M_{1}) + \dots + \sT (M_{h}) \leq \sT (M)$. 
Take a $d$-sheet covering $p \colon \widetilde{M} \to M$. 
Then the components $M_{ij}$ of $p^{-1} (M_{i})$ are the components of 
the JSJ decomposition of $\widetilde{M}$. 
Let $d_{ij}$ denote the degree of the covering $M_{ij} \to M_{i}$. 
Then $\sum_{j} d_{ij} = d$. 
We have 
\[
\sum_{j} T(M_{ij}; \partial M_{ij}) 
\geq \sum_{j} d_{ij} \cdot \sT (M_{i}; \partial M_{i}) 
= d \cdot \sT (M_{i}; \partial M_{i})
\]
by definition. 
Theorem \ref{thm:lowerbound} implies that 
\[
\sum_{i,j} T(M_{ij}; \partial M_{ij}) \leq T(\widetilde{M}). 
\]
Therefore it holds that 
\[
\sum_{i} \sT(M_{i}; \partial M_{i}) \leq \frac{T(\widetilde{M})}{d}. 
\]
Since we took $\widetilde{M}$ arbitrarily, 
we obtain 
\[
\sum_{i} \sT(M_{i}; \partial M_{i}) \leq \sT(M). 
\]
Furthermore, $\sT (M_{i}) = \sT(M_{i}; \partial M_{i})$ 
by Theorem \ref{thm:relabs}. 
\end{proof}

\begin{cor}
\label{cor:splvssimpvol}
There exists a constant $C>0$ such that the following holds. 
If $M$ be a closed 3-manifold, 
then 
\[
C \cdot \sT (M) \leq \| M \| \leq \frac{\pi}{V_3} \sT (M), 
\]
where $\| M \|$ is the simplicial volume of $M$ 
and $V_3$ is the volume of an ideal regular tetrahedron. 
\end{cor}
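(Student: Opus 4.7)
The plan is to reduce both inequalities to the corresponding statements for the hyperbolic pieces in the geometric decomposition of $M$, using the additivity of $\sT$ established in this section together with the known additivity of the simplicial volume.

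First, I would invoke the prime decomposition of $M$. The simplicial volume is additive under connected sums (Gromov--Soma), and $\sT$ is additive under free products by Theorem \ref{thm:additivityfree}. Hence it suffices to prove the claim for each prime summand, so I may assume $M$ is irreducible.

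Next, let $M = M_1 \cup \dots \cup M_h$ be the JSJ decomposition. Theorem \ref{thm:additivityJSJ} gives $\sT(M) = \sum_i \sT(M_i)$, while Soma's additivity of simplicial volume along incompressible tori gives $\|M\| = \sum_i \|M_i\|$ (with the relative simplicial volume used on pieces with boundary). By geometrization, each $M_i$ is either Seifert fibered or hyperbolic, and on Seifert pieces both $\sT(M_i)$ (Theorem \ref{thm:seifertvanish}) and $\|M_i\|$ vanish. So both inequalities reduce to the case of a hyperbolic piece $M_i$, possibly cusped.

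For a hyperbolic piece $M_i$ the Gromov--Thurston formula $\|M_i\| = \vol(M_i)/V_3$ holds (in the relative sense when $M_i$ has toral cusps). The upper bound then follows from Cooper's inequality $\vol(M_i) \leq \pi \cdot \sT(M_i)$ discussed above, which extends to cusped manifolds via the Agol--Liu isoperimetric inequality. The lower bound follows from Proposition \ref{prop:volvsspl}: the positive infimum of $\vol(N)/\sT(N)$ over all finite volume hyperbolic 3-manifolds $N$ provides a universal constant $C>0$ with $C \cdot \sT(M_i) \leq \|M_i\|$, after absorbing $V_3$ into the constant. Summing the two inequalities over $i$ and using the additivity from the previous paragraph yields the corollary. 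The proof is essentially bookkeeping given the machinery already in place; the only subtle point is to ensure that on pieces with boundary one works with relative versions of $\sT$ and $\|\cdot\|$ throughout, identifying them with the absolute versions via Theorem \ref{thm:relabs} (respectively Theorem \ref{thm:relabshyp}). The hard analytic content has already been packed into Proposition \ref{prop:volvsspl} and the Cooper--Agol--Liu inequality.
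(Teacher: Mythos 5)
Your proposal is correct and follows essentially the same route as the paper: reduce via prime and JSJ decompositions using Theorems \ref{thm:additivityfree}, \ref{thm:seifertvanish}, and \ref{thm:additivityJSJ} together with the additivity and proportionality of the simplicial volume, then settle the hyperbolic case by Cooper's inequality and Proposition \ref{prop:volvsspl}. The only detail the paper adds is passing first to the orientable double cover (justified by Proposition \ref{prop:vol}), which you omit but which is routine bookkeeping.
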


\begin{proof}
We can assume that $M$ is orientable by taking the double covering. 
Let $M = M_{1} \# \dots \# M_{n}$ be the prime decomposition. 
Each connected summand $M_{i}$ is irreducible or homeomorphic to $S^{1} \times S^{2}$. 
Let $M_{i} = M_{i1} \cup \dots \cup M_{ih_{i}}$ be the JSJ decomposition 
if $M_{i}$ is irreducible. 
The geometrization implies that each JSJ component $M_{ij}$ 
is Seifert fibered or hyperbolic. 
Let $N_{1}, \dots , N_{m}$ denote the hyperbolic components among $M_{ij}$. 
Then 
\[
\| M \| = 1/V_3 (\vol (N_{1}) + \dots + \vol (N_{m}))
\] 
by additivity and proportionality of simplicial volume \cite{gromov1982volume}. 
Now we have 
\[
\sT (M) = \sT (N_{1}) + \dots + \sT (N_{m})
\]
by Theorem \ref{thm:additivityfree}, Theorem \ref{thm:seifertvanish} and 
Theorem \ref{thm:additivityJSJ}. 
Therefore we are reduced to proving for hyperbolic 3-manifolds. 
A hyperbolic 3-manifold $M$ satisfies the above inequalities 
by Cooper's inequality and Proposition \ref{prop:volvsspl}. 
\end{proof}

\section{Examples of stable presentation length}
\label{section:example}

\subsection{Surface groups}

We calculate the explicit value of the stable presentation length 
of a surface group, 
which coincides with the simplicial volume of the surface. 

\begin{thm}
\label{thm:surface}
Let $\Sigma _{g}$ is the closed orientable surface of genus $g \geq 1$. 
Then 
\[
\sT (\pi _1 (\Sigma _{g})) = 4g-4 = -2 \chi (\Sigma _{g}).
\] 
\end{thm}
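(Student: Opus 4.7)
The strategy is to sandwich $\sT(\pi_1(\Sigma_g))$ between $4g-4$ from above and below. The case $g=1$ is immediate: $\pi_1(\Sigma_1) = \Z^2$ has $T = 2$ and admits index-$d$ subgroups isomorphic to $\Z^2$ for every $d$, so by Definition 3.1 we have $\sT \leq 2/d \to 0 = 4 \cdot 1 - 4$.

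For $g \geq 2$, the upper bound $\sT \leq 4g-4$ uses the standard one-relator presentation
\[
\pi_1(\Sigma_g) = \langle a_1, b_1, \ldots, a_g, b_g \mid [a_1,b_1]\cdots[a_g,b_g]\rangle,
\]
which has one relator of length $4g$, giving $T(\pi_1(\Sigma_g)) \leq 4g-2$ by Definition 2.1. Any index-$d$ subgroup $H$ is isomorphic to $\pi_1(\Sigma_{g'})$ with $g' = d(g-1)+1$ by Riemann--Hurwitz, so applying the same bound to $H$ yields
\[
\frac{T(H)}{d} \leq \frac{4g'-2}{d} = 4(g-1) + \frac{2}{d},
\]
which tends to $4g-4$ as $d \to \infty$.

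For the matching lower bound, I would prove $T(\pi_1(\Sigma_{g'})) \geq 4g'-4$ for every $g' \geq 2$; from this, $T(H)/d \geq 4(g-1)$ for every finite-index $H$, giving $\sT \geq 4g-4$. This is the surface analog of Cooper's inequality $\vol(M) \leq \pi \cdot T(M)$ from Proposition 3.7: it reads $\mathrm{area}(\Sigma_{g'}) \leq \pi \cdot T$, i.e.\ $4\pi(g'-1) \leq \pi T$. Given a minimal triangular presentation complex $P$ with $t$ triangles and the classifying map $f \colon P \to \Sigma_{g'}$, lift equivariantly to $\tilde f \colon \tilde P \to \mathbb{H}^2$ and straighten each triangle to a geodesic triangle of area at most $\pi$. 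Hopf surjectivity yields $c = \sum \lambda_i \Delta_i \in H_2(P;\R)$ with $f_*c = [\Sigma_{g'}]$, and pairing with the hyperbolic area form gives $\sum|\lambda_i| \geq 4(g'-1)$.

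The main obstacle is converting this $\ell^1$-bound on cycles into a bound on $t$, since a priori the $\lambda_i$ can exceed $1$ and the cycle need not use every triangle. I would handle this by choosing $c$ with coefficients in $\{-1, 0, 1\}$, exploiting that integer and real simplicial volume coincide for closed hyperbolic surfaces, so that $\|c\|_1 \leq t$; alternatively, one argues directly that the straightened map covers $\Sigma_{g'}$ with positive algebraic multiplicity, so that the sum of (unsigned) areas of the geodesic triangle images forces $t \cdot \pi \geq \mathrm{area}(\Sigma_{g'})$. Combining this with the upper bound completes the proof that $\sT(\pi_1(\Sigma_g)) = 4g-4$.
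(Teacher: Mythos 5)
Your overall skeleton is the same as the paper's: handle $g=1$ via a proper finite-index copy of $\Z\times\Z$, get the upper bound from the one-relator presentation applied to the covers $\Sigma_{d(g-1)+1}$, and reduce everything to the single estimate $T(\pi_1(\Sigma_{g'}))\geq 4g'-4$ proved by straightening a minimal presentation complex and comparing areas. The problem is that this last estimate --- the only nontrivial point --- is exactly where your argument is incomplete, and you say so yourself. Your primary route produces, via Hopf's theorem, an integral cellular $2$-cycle $c=\sum\lambda_i\Delta_i$ in $P$ with $f_*c=[\Sigma_{g'}]$, but the $\lambda_i$ are arbitrary integers, so $\sum|\lambda_i|\geq 4g'-4$ does not bound the number $t$ of triangles. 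The proposed patch does not close this: the coincidence of integral and real simplicial volume for surfaces concerns singular cycles and gives no control over cycles supported on the \emph{fixed} $2$-complex $P$; there is no reason a fundamental cycle with coefficients in $\{-1,0,1\}$ exists in the cellular chain complex of $P$. Your fallback claim that the straightened map ``covers $\Sigma_{g'}$ with positive algebraic multiplicity'' is asserted without proof, and since $P$ is not a manifold there is no ready-made degree theory to invoke.

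The paper closes this gap with an elementary observation that avoids homology entirely: the (straightened) map $f\colon P\to\Sigma_{g'}$ inducing an isomorphism on $\pi_1$ must be \emph{set-theoretically surjective}, because if it missed a point $p$ it would factor through $\Sigma_{g'}\setminus\{p\}$ and hence embed $\pi_1(\Sigma_{g'})$ into a free group, which is impossible for $g'\geq 2$. Once $f$ is onto, the union of the images of the $t$ closed $2$-cells (geodesic triangles, each of area $<\pi$ by Gauss--Bonnet) together with the measure-zero image of the $1$-skeleton covers all of $\Sigma_{g'}$, so $t\pi>\mathrm{area}(\Sigma_{g'})=(4g'-4)\pi$; no multiplicities or $\ell^1$-norms are needed. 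If you insert this surjectivity argument in place of your homological step, the rest of your proof goes through and agrees with the paper's.
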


\begin{proof}
If $g=1$, $\pi _1 (\Sigma _{g}) \cong \Z \times \Z$ has a finite index proper subgroup 
isomorphic to $\Z \times \Z$. 
Then $\sT (\pi _1 (\Sigma _{g})) = 0$ by Proposition \ref{prop:vol}. 

Suppose that $g \geq 2$. 
Since there is a presentation 
\[
\pi _1 (\Sigma _{g}) = 
\langle x_1, y_1, \dots, x_g, y_g | [x_1, y_1] \cdots [x_g, y_g] \rangle , 
\]
we have $T(\pi _1 (\Sigma _{g})) \leq 4g-2$. 
In order to estimate from below, take a minimal presentation complex $P$ 
for $\pi _1 (\Sigma _{g})$. 
We put a hyperbolic metric on $\Sigma _{g}$. 
There exists a map $f \colon P \to \Sigma _{g}$ inducing an isomorphism between 
their fundamental groups. We can take $f$ which maps every 2-cell of $P$ 
to a geodesic triangle in $\Sigma _{g}$. 

We claim that $f$ is surjective. 
If $f$ is not surjective, there is a point $p$ in $\Sigma _{g} - f(P)$. 
Then $f$ induces an injection from $\pi _1 (\Sigma _{g})$ 
to $\pi _1 (\Sigma _{g} - \{ p \})$. 
Since $\pi _1 (\Sigma _{g} - \{ p \})$ is a free group 
and $\pi _1 (\Sigma _{g})$ is not a free group , we have a contradiction. 
Now $\textrm{area}(\Sigma _{g}) = (4g-4) \pi$ and the area of a geodesic triangle 
in $\Sigma _{g}$ is smaller than $\pi$. 
Hence we obtain $(4g-4) \pi < \pi \cdot T(\pi _1 (\Sigma _{g}))$. 

We finally compute $\sT (\pi _1 (\Sigma _{g}))$. 
Since $\Sigma _{d(g-1)+1}$ covers $\Sigma _{g}$ with degree $d$, 
$\sT (\pi _1 (\Sigma _{g})) \leq \frac{1}{d} T (\pi _1 (\Sigma _{d(g-1)+1})) 
\leq \frac{1}{d} (4(d(g-1)+1)-2)$. 
Hence we obtain $\sT (\pi _1 (\Sigma _{g})) \leq 4g-4$ by $d \to \infty$. 
Conversely, $4g-4 < \frac{1}{d}T(\pi _1 (\Sigma _{d(g-1)+1}))$ for any $d \geq 1$ 
implies that $4g-4 \leq \sT (\pi _1 (\Sigma _{g}))$. 
\end{proof}

\begin{thm}
\label{thm:cuspedsurface}
Let $\Sigma _{g,b}$ denote the compact orientable surface of genus $g$ 
whose boundary components are $S_{1}, \dots , S_{b}$. 
Suppose that $b>0$ and $2g-2+b>0$. 
Then 
\begin{align*}
\sT (\pi _1 (\Sigma _{g,b}); \pi _1 (S_{1}), \dots , \pi _1 (S_{b})) 
&= T(\pi _1 (\Sigma _{g,b}); \pi _1 (S_{1}), \dots , \pi _1 (S_{b})) \\
&= 4g-4+2b = -2 \chi (\Sigma _{g,b}).
\end{align*} 
\end{thm}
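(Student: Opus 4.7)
The plan is to prove the absolute equality $T(\pi_1(\Sigma_{g,b}); \pi_1(S_1),\ldots,\pi_1(S_b)) = 4g-4+2b$; the stable version will follow automatically, because every finite cover of $\Sigma_{g,b}$ is again a compact orientable surface $\Sigma_{\tilde g,\tilde b}$ whose boundary circles realize the conjugate-intersection subgroups appearing in Definition~\ref{dfn:stableT}, and $-2\chi(\Sigma_{\tilde g,\tilde b}) = d(4g-4+2b)$ for a degree-$d$ cover. Applying the absolute equality to every cover and dividing by $d$ yields the constant value $4g-4+2b$ for every finite-index subgroup, so the infimum defining $\sT$ attains this same value.

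For the upper bound $T \leq 4g-4+2b$, I would triangulate the closed surface $\Sigma_g$ taking exactly the $b$ boundary-component points as its vertex set. The relations $V - E + F = 2-2g$ and $3F = 2E$ force $F = 2V - 2\chi(\Sigma_g) = 2b + 4g - 4$ triangles. Marking these $b$ vertices with the boundary isotropies $\pi_1(S_i) \cong \Z$ produces an orbihedron whose orbifold fundamental group is $\pi_1(\Sigma_g \setminus \{b \text{ points}\}) = \pi_1(\Sigma_{g,b})$, so this is a valid relative presentation complex realizing the claimed upper bound.

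For the lower bound $T \geq 4g-4+2b$, I would cut $\Sigma_{g,b}$ along $3g-3+b$ disjoint essential simple closed curves to obtain a pair-of-pants decomposition into $n := 2g-2+b$ copies of $\Sigma_{0,3}$. The induced graph-of-groups decomposition of $\pi_1(\Sigma_{g,b})$ has vertex groups $F_2$ and edge groups $\Z$, and is rigid and reduced because each edge subgroup is maximal cyclic in its adjacent $F_2$-vertex group (the two-dimensional analog of Proposition~\ref{prop:rigid}). Then Theorem~\ref{thm:lowerbound} yields
\[
T(\pi_1(\Sigma_{g,b}); \pi_1(\partial)) \geq n \cdot T(\pi_1(\Sigma_{0,3}); \pi_1(\partial \Sigma_{0,3})),
\]
and it remains to establish the base case $T(\pi_1(\Sigma_{0,3}); \pi_1(\partial)) = 2$. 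The upper bound $\leq 2$ again comes from a minimal triangulation of $S^2$ with $3$ vertices. For the lower bound $\geq 2$, a single-triangle relative presentation complex would place its three $\Z$-marked vertices at the three corners of the triangle, where the link in $|P|$ is an arc rather than a circle, which is incompatible with faithful $\Z$-isotropy in Haefliger's orbihedra. Multiplying yields $T \geq 2n = 4g-4+2b$.

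The main obstacle I anticipate is justifying rigidity and reducedness of the pants decomposition, since Proposition~\ref{prop:rigid} is stated only for 3-manifolds; the same Bass-Serre tree argument should carry over in dimension two once one verifies that the pants and boundary circles generate maximal cyclic subgroups of the adjacent pants groups.
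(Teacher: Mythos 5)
Your upper bound and your reduction of the stable statement to the unstable one are both fine and agree with the paper: a triangulation of $\Sigma_g$ with the $b$ marked points as its vertex set is exactly the paper's ideal-triangulation presentation complex with $4g-4+2b$ triangles, and multiplicativity of $-2\chi$ under coverings gives the stabilization for free. The lower bound, however, has a genuine gap. Theorem \ref{thm:lowerbound} bounds the \emph{absolute} presentation length $T(G)$ from below, not the relative one $T(G;\pi_1(S_1),\dots,\pi_1(S_b))$; and since $b>0$ forces $\pi_1(\Sigma_{g,b})$ to be free, $T(\pi_1(\Sigma_{g,b}))=0$. A literal application of the theorem to your pants decomposition would therefore yield $0\ge 2(2g-2+b)>0$, which is absurd, so its hypotheses must fail --- and they do: the decomposition is not reduced in the sense of Definition \ref{dfn:regid}. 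Each boundary curve of a pair of pants is a primitive element of the vertex group $F_2=\langle a,b\rangle$ (for instance $\langle a\rangle$, or $\langle ab\rangle$ since $\{a,ab\}$ is a basis), hence a free factor, hence a vertex group of a free-product decomposition of $F_2$; this is precisely the degeneration that Delzant's reducedness hypothesis excludes, and ``maximal cyclic'' does not repair it. The paper flags exactly this phenomenon right after Theorem \ref{thm:relabs}: $\sT(\pi_1(\Sigma_{g,b}))=0$ while the relative invariant is $4g-4+2b$, which is why the rank-at-least-two hypothesis there is necessary.

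A second, smaller gap is the base case: your argument that $T(\pi_1(\Sigma_{0,3});\pi_1(\partial))\ge 2$ only excludes one specific one-triangle configuration, not complexes with a single triangle plus bigons and extra edges, nor complexes with no triangles at all. The paper's proof avoids both problems by a direct area argument: put a hyperbolic metric with geodesic boundary on $\Sigma_{g,b}$, truncate a minimal relative presentation complex, map the truncated complex into $\Sigma_{g,b}$ sending the truncation sections to the boundary, double the map to deduce surjectivity from the closed case (Theorem \ref{thm:surface}), and straighten so that each $2$-cell becomes a right-angled hexagon of area $\pi$; Gauss--Bonnet then gives $(4g-4+2b)\pi\le\pi\cdot T(\pi_1(\Sigma_{g,b});\pi_1(S_1),\dots,\pi_1(S_b))$. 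If you want to keep a cut-and-paste proof, you would need a genuinely relative version of Delzant's lower bound whose hypotheses hold for peripheral structures on free groups, which is not available off the shelf here.
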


\begin{proof}
$\Sigma _{g,b}$ admits a hyperbolic metric with cusps $S_{1}, \dots , S_{b}$. 
An ideal triangulation of this hyperbolic surface gives a presentation complex 
for \\
$(\pi _1 (\Sigma _{g,b}); \pi _1 (S_{1}), \dots , \pi _1 (S_{b}))$, 
which consists of $4g-4+2b$ triangles. 
Therefore $T(\pi _1 (\Sigma _{g,b}); \pi _1 (S_{1}), \dots , \pi _1 (S_{b})) 
\leq 4g-4+2b$. 

In order to obtain the converse inequality, 
we put a hyperbolic metric with geodesic boundary on $\Sigma _{g,b}$. 
Take a minimal presentation complex $P$ 
for \\
$(\pi _1 (\Sigma _{g,b}); \pi _1 (S_{1}), \dots , \pi _1 (S_{b}))$. 
Let $P^{\prime}$ be the complex obtained by truncating $P$. 
There is a continuous map $f \colon P^{\prime} \to \Sigma _{g,b}$ such that 
$f$ sends the truncated section $\partial P^{\prime}$ of $P^{\prime}$ 
to the corresponding boundary components and $f$ induces an isomorphism between 
their fundamental groups. 
Then $f$ induces a map $Df \colon DP^{\prime} \to D\Sigma _{g,b}$ between 
their doubles. 
Since $Df$ induces an isomorphism between the fundamental groups, 
$Df$ is surjective by the proof of Theorem \ref{thm:surface}. 
Therefore $f$ is also surjective. 
After straightening $f$ relatively to the boundary, 
the 2-cells of $P^{\prime}$ map to right-angled hexagons, 
whose areas are equal to $\pi$. 
Then 
\[
(4g-4+2b) \pi = \textrm{area}(\Sigma _{g,b}) \leq \pi \cdot 
T(\pi _1 (\Sigma _{g,b}); \pi _1 (S_{1}), \dots , \pi _1 (S_{b})). 
\]

Now we have 
$T(\pi _1 (\Sigma _{g,b}); \pi _1 (S_{1}), \dots , \pi _1 (S_{b})) 
= 4g-4+2b$. 
Since these values are already volume-like, their stable presentation lengths 
coincide with their presentation lengths. 
\end{proof}

\subsection{Bianchi groups}
\label{subsection:bianchi}

We consider the stable presentation lengths of Bianchi groups 
$\PSL (2, \mathcal{O}_{d})$, 
where $\mathcal{O}_{d}$ is the ring of integers 
in the imaginary quadratic field $\Q (\sqrt{-d})$, namely,  
\[
\mathcal{O}_{d} = 
\begin{cases}
\Z [ \frac{1+\sqrt{-d}}{2} ] & \text{if} -d \equiv 1 \mod 4 \\
\Z [\sqrt{-d}] & \text{if} -d \equiv 2, 3 \mod 4. 
\end{cases} 
\]
It is known that the fundamental group of 
every finite volume cusped arithmetic hyperbolic 3-manifold 
is commensurable with a Bianchi group 
\cite[Proposition 4.1]{neumann1992arithmetic}. 
Hatcher~\cite{hatcher1983hyperbolic} showed that 
some Bianchi groups preserve tesselations of $\mathbb{H}^{3}$ 
by ideal uniform polyhedra. 
Consequently, 
hyperbolic 3-manifolds obtained from certain ideal uniform polyhedra 
are arithmetic.

We will give upper bounds of stable presentation lengths 
of some arithmetic link components 
by constructing explicit presentations of their fundamental groups. 
We consider the complements of links in $T^2 \times [0,1]$. 
Since the complement of the Hopf link is homeomorphic 
to $T^2 \times [0,1]$, 
the complement of a link in $T^2 \times [0,1]$ 
is homeomorphic to the complement of a link in $S^{3}$. 
As we will see later, 
there are infinitely many links in $T^2 \times [0,1]$ 
whose complements admit hyperbolic structures. 

For a hyperbolic link $K$ in $T^2 \times [0,1]$, 
the two boundary components of $T^2 \times [0,1]$ 
and the components of $K$ correspond to the cusps. 
We will take finite coverings of $T^2 \times [0,1] \setminus K$ 
induced by ones of $T^2 \times [0,1]$. 
These coverings are the complement of links $K_{m,n}$ 
in $T^2 \times [0,1]$. 
We can obtain Wirtinger presentations 
of $\pi_{1} (T^2 \times [0,1] \setminus K_{m,n})$ from their diagrams 
analogously to the ones for links in $S^{3}$. 
We will need additional generators 
to obtain presentations of shorter lengths. 

We can obtain an explicit presentation complex 
from an ideal triangulation of $T^2 \times [0,1] \setminus K$. 
For instance, the 2-skeleton of an ideal triangulation 
is a presentation complex for $\pi_{1} (T^2 \times [0,1] \setminus K)$ 
relatively to the fundamental groups of the cusps. 
If there is an alternating diagram of $K$, 
We can systematically obtain an ideal polyhedral decomposition 
of $T^2 \times [0,1] \setminus K$ from the diagram 
analogously to the ideal decomposition of alternating links 
due to Menasco~\cite{menasco1983polyhedral}. 
This argument will be applied 
in Section~\ref{subsubsection:d3} and \ref{subsubsection:d1}. 
The ideal decomposition in Section~\ref{subsubsection:d1} 
was explained in detail 
by Champanerkar, Kofman and Purcell~\cite[Section 3]{champanerkar2016geometrically}. 

We can also obtain a small presentation complex 
from an ideal even triangulation. 
An (ideal) triangulation $\mathcal{T}$ of a 3-manifold $M$ determines 
the projection $p$ from the tetrahedra 
to the end-compactification of $M$. 
Following Rubinstein and Tillmann~\cite{rubinstein2015even}, 
$\mathcal{T}$ is an \textit{even} triangulation 
if the preimage $p^{-1}(\tau)$ of each edge $\tau$ in $\mathcal{T}$ is 
even number of edges. 
A \textit{vertex coloring} of $\mathcal{T}$ 
is a map from the vertices in $\mathcal{T}$ to $\{ 0,1,2,3 \}$ 
such that its restriction to the vertices of each tetrahedron 
is bijective. 
Although the universal covering of an even triangulation 
admits a vertex coloring, 
the deck transformations may not preserve the coloring. 
This difference determines a monodromy homomorphism from $\pi_{1}(M)$ 
to the symmetric group $\mathrm{Sym}(4)$ on $\{ 0,1,2,3 \}$, 
which is called a \textit{symmetric representation} 
for an even triangulation in \cite{rubinstein2015even}.

\begin{lem}
\label{lem:even}
Suppose that a finite volume hyperbolic 3-manifold $M$ 
admits an ideal even triangulation $\mathcal{T}$ with $n$ tetrahedra. 
Then $\sT (M) \leq n/2$. 
\end{lem}

\begin{proof}
Let $\rho \colon \pi_{1}(M) \to \mathrm{Sym}(4)$ 
be a symmetric representation. 
The manifold $M$ has a finite covering $M^{\prime}$ 
which corresponds to $\ker (\rho)$. 
Then the lifted triangulation $\mathcal{T}^{\prime}$ 
of $\mathcal{T}$ for $M^{\prime}$ admits a vertex coloring. 
We take the $\deg(M^{\prime} \to M)n/2$ triangles in $\mathcal{T}^{\prime}$ 
which do not contain a vertex of color 0. 
The union of these triangles 
is a presentation complex for $\pi_{1} (M^{\prime})$ 
relative to the fundamental groups of the cusps of colors $\{ 1,2,3 \}$. 
Therefore Theorem~\ref{thm:relabs} implies that 
$\sT (M) = \sT (M^{\prime})/\deg(M^{\prime} \to M) \leq n/2$. 
\end{proof}

We will give explicit examples for the above construction 
in Section \ref{subsubsection:d3} and \ref{subsubsection:d1}.

\subsubsection{$d=3$ (Figure-eight knot complement)}
\label{subsubsection:d3}

The figure-eight knot complement $M_{1}$ is obtained 
from two ideal regular tetrahedra. 
Hence $\vol (M_{1}) = 2 V_{3} = 2.0298...$ 
and $\sigma (M_{1}) = \sigma_{\infty}(M_{1}) = 2$.  
It is known that $\pi_{1} (M_{1})$ is isomorphic to 
an index 12 subgroup of $\PSL (2, \mathcal{O}_{3})$. 
The index follows from 
Humbert's formula~\cite[Theorem 7.4.1]{thurston1978geometry}
for $\vol (\mathbb{H}^{3} / \PSL (2, \mathcal{O}_{d}))$. 

For a general case, 
suppose that a hyperbolic 3-manifold $M$ is obtained 
from ideal regular tetrahedra. 
Then the action of $\pi_{1} (M)$ on $\mathbb{H}^{3}$ 
preserves the tessellation by ideal regular tetrahedra of $\mathbb{H}^{3}$. 
Since the symmetry group of this tessellation is commensurable with 
$\PSL (2, \mathcal{O}_{3})$, 
the groups $\pi_{1} (M), \pi_{1} (M_{1})$ and $\PSL(2, \mathcal{O}_{3})$ 
are commensurable 
as shown in \cite[Section 3, Example 2]{hatcher1983hyperbolic}.

\begin{prop}
\label{prop:figure8}
\[
\sT (M_{1}) \leq 1. 
\]
\end{prop}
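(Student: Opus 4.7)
The plan is to exhibit, for arbitrarily large $n$, a finite cover $\widetilde{M}_n \to M_1$ of degree $d_n \to \infty$ together with a relative presentation complex for $\widetilde{M}_n$ using at most $d_n + o(d_n)$ triangles. By Theorem \ref{thm:relabshyp} this will give $\sT(M_1) = \sT(M_1; \partial M_1) \leq 1$.

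Following the strategy announced at the start of this subsection, I would first pass to a fixed finite cover $\widetilde{M} \to M_1$, of some degree $d$, that is homeomorphic to a link complement inside $T^2 \times [0,1]$. Such a cover should exist because $\pi_1(M_1)$ is commensurable with the Bianchi group $PSL(2,\mathcal{O}_3)$: unfolding a suitable cover of the (hexagonal) cusp torus, followed by cutting along a horospherical torus, produces the thickened-torus realization. Next, for each $n$, I would take the $n$-characteristic cover of the base $T^2$ to obtain a further cover $\widetilde{M}_n \to \widetilde{M}$ of degree $n^2$, hence a cover of $M_1$ of degree $d n^2$. Finally, I would construct an explicit relative presentation complex for $\widetilde{M}_n$ adapted to the product structure, whose triangle count splits as $A n^2 + B n$, where $A n^2$ counts lifts of a fixed set of ``interior'' triangles dual to the link and $B n$ counts triangles that run along $1$-cycles in the base torus. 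Dividing by $d n^2$ and letting $n \to \infty$ gives $\sT(M_1) \leq A/d$, and the construction is to be arranged so that $A/d = 1$.

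The main obstacle is the construction of the interior triangles of the presentation complex for $\widetilde{M}$: their count must realize the conjectural ratio $\sigma_\infty(M_1)/2 = 1$, rather than the looser bound $\sigma(\widetilde{M}) + 3$ given by Proposition \ref{prop:plvscomplexity2}. Achieving this requires using the product structure of $T^2 \times [0,1]$ to collapse, on average, one face per tetrahedron of an underlying ideal triangulation of $\widetilde{M}$, so that the two horizontal copies of $T^2$ absorb the excess 2-cells through their cusp isotropy. Once such a complex for $\widetilde{M}$ is in hand, the lifting and the limit $n \to \infty$ are routine applications of Proposition \ref{prop:upvol} and the $n$-characteristic covering construction.
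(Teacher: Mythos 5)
Your overall strategy is the same as the paper's: realize a manifold commensurable with $M_1$ as a link complement in $T^2 \times [0,1]$, pass to large covers unwrapping the base torus so that the contribution of the ``boundary'' data grows only linearly while the degree grows quadratically, and read off the bound in the limit. (Two small corrections to the setup: what one gets from the arithmetic picture is a manifold \emph{commensurable} with $M_1$, not necessarily a cover of it — the paper's $M_{1,1}$ is built from $12$ ideal regular tetrahedra and one concludes $\sT(M_{1,1}) = 6\,\sT(M_1)$ via Proposition \ref{prop:vol} applied to a common cover; and the paper's first proof does not need Theorem \ref{thm:relabshyp} at all, since it produces an absolute presentation directly.)

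The genuine gap is that you never carry out the one step that constitutes the entire content of the proposition. You correctly identify ``the main obstacle'' — producing a presentation whose interior triangle count is $A n^2$ with $A/d = 1$, i.e.\ one triangle per \emph{two} tetrahedra of the underlying ideal triangulation rather than the one-per-tetrahedron count that Proposition \ref{prop:plvscomplexity2} and Corollary \ref{cor:splvscomplexity} would give — but you then write only that ``the construction is to be arranged so that $A/d = 1$,'' which is an assertion of the conclusion, not an argument. Without an explicit complex or presentation realizing this ratio, your argument proves nothing beyond the bound $\sT(M_1) \leq \sigma_\infty(M_1) = 2$ already available from Corollary \ref{cor:splvscomplexity}. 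The paper closes this gap concretely: for the link complement $M_{1,1}$ in $T^2 \times [0,1]$ (made of $12$ ideal regular tetrahedra, hence commensurability index $6$ over $M_1$) it writes down a Wirtinger-type presentation of $\pi_1(M_{m,n})$ in which the only relators not involving the base generators $s,t$ are $6mn$ relators of word length exactly $3$ (each contributing a single triangle), so that $T(M_{m,n}) \leq 6mn + 4m + 4n + 6$ and the ratio to the degree $6mn$ over $M_1$ tends to $1$. Equivalently, one can build a fundamental domain $X$ of $M_{1,1}$ from the $12$ tetrahedra so that one cusp meets $\partial X$ in a single vertex $v$ and the triangles of $\partial X$ avoiding $v$ number only $6$, giving a relative presentation complex and invoking Theorem \ref{thm:relabs}. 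Either of these explicit constructions is what your proposal is missing.
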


\begin{proof}
Let $M_{1,1}$ denote the complement of 
the link in Figure~\ref{fig:spl-d3link}. 
By taking the Hopf sublink consisting of 
the two components shown using thin lines, 
we regard $M_{1,1}$ as the complement of 
a link in $T^2 \times [0,1]$, 
which is constructed by gluing of the piece in Figure~\ref{fig:spl-link1} 
along the faces of top and bottom, left and right. 

The manifold $M_{1,1}$ can be decomposed 
into four ideal regular hexagonal pyramids 
as shown in Figure~\ref{fig:spl-gluing1}. 
Since the union of two ideal regular hexagonal pyramids can be decomposed 
into six ideal regular tetrahedra, 
the manifold $M_{1,1}$ is obtained from 12 ideal regular tetrahedra. 
Since the manifolds $M_{1,1}$ and $M_{1}$ are commensurable, 
we have $\sT(M_{1,1})/\sT(M_1) = \vol(M_{1,1})/\vol(M_1) =6$.

Let $M_{m,n}$ denote the $mn$-sheeted covering of $M_{1,1}$ 
which is the $m$-sheeted covering along $s$ and the $n$-sheeted covering along $t$ 
as in Figure~\ref{fig:spl-gen1}. 
The diagram gives a Wirtinger presentation of $\pi_{1} (M_{m,n})$. 
We put a base point of $\pi_{1} (M_{m,n})$ at the upper left front point. 
The generators are 
\[
x_{ij}, y_{ij}, z_{ij}, w_{ij}, 
x_{m+1,j}, y_{m+1,j}, x_{i,n+1}, z_{i,n+1}, x_{m+1,n+1}, s, t, 
\]
and the relators are 
\begin{align*}
z_{ij} &= y_{ij}x_{ij}y_{ij}^{-1}, 
& w_{ij} &= z_{ij}y_{ij}z_{ij}^{-1}, \\
x_{i+1,j+1} &= w_{ij}^{-1}z_{i,j+1}w_{ij}, 
& y_{i+1,j} &= x_{i+1,j+1}^{-1}w_{ij}x_{i+1,j+1}, \\
x_{m+1,j} &= sx_{1,j}s^{-1}, & x_{m+1,n+1} &= sx_{1,n+1}s^{-1}, 
\quad y_{m+1,j} = sy_{1,j}s^{-1}, \\ 
x_{i,n+1} &= tx_{i,1}t^{-1}, & x_{m+1,n+1} &= tx_{m+1,1}t^{-1}, 
\quad z_{i,n+1} = tz_{i,1}t^{-1}, \\
st &= ts, 
\end{align*}
for $1 \leq i \leq m, 1 \leq j \leq n$. 
The generators $x_{ij}, y_{ij}, z_{ij}, w_{ij}$ 
correspond to the arcs in the diagram, 
Some relators correspond to the crossings of the link, 
and the others come from the actions of $s$ and $t$.

We add generators $a_{ij}$ and $b_{ij}$ for smaller presentation length. 
Thus we obtain an explicit presentation of $\pi_{1} (M_{m,n})$: 
the generators are 
\[
x_{ij}, y_{ij}, z_{ij}, w_{ij}, a_{ij}, b_{ij}, 
x_{m+1,j}, y_{m+1,j}, x_{i,n+1}, z_{i,n+1}, x_{m+1,n+1}, s, t, 
\]
and the relators are 
\begin{align*}
a_{ij} &= y_{ij}x_{ij}, & a_{ij} &= z_{ij}y_{ij}, & a_{ij} &= w_{ij}z_{ij}, \\ 
b_{ij} &= z_{i,j+1}w_{ij}, & b_{ij} &= w_{ij}x_{i+1,j+1}, & b_{ij} &= x_{i+1,j+1}y_{i+1,j}, \\ 
x_{m+1,j} &= sx_{1,j}s^{-1}, & x_{m+1,n+1} &= sx_{1,n+1}s^{-1}, 
& y_{m+1,j} &= sy_{1,j}s^{-1}, \\ 
x_{i,n+1} &= tx_{i,1}t^{-1}, &x_{m+1,n+1} &= tx_{m+1,1}t^{-1}, 
& z_{i,n+1} &= tz_{i,1}t^{-1}, \quad st=ts, 
\end{align*}
for $1 \leq i \leq m, 1 \leq j \leq n$. 
Therefore 
\[
\sT(M_{1,1}) \leq \inf_{m,n} \frac{T(M_{m,n})}{mn} 
\leq \inf_{m,n} \frac{6mn+4m+4n+6}{mn} = 6. 
\]
\end{proof}

\begin{rem}
In fact, Proposition~\ref{prop:figure8} follows from Lemma~\ref{lem:even}, 
since a triangulation made of ideal regular tetrahedra is even. 
For some possible use, however, 
we gave an explicit presentation of $\pi_{1} (M_{m,n})$. 

It is also possible to construct an explicit relative presentation complex 
as in the proof of Lemma~\ref{lem:even}. 
The manifold $M_{1,1}$ has four cusps $S_{0}, S_{1}, S_{2}, S_{3}$, 
where $S_{0}$ and $S_{1}$ are the boundary component of 
$T^2 \times [0,1]$. 
We construct a fundamental domain $X$ of $M_{1,1}$ 
as a union of 12 ideal regular tetrahedra such that 
$S_{0}$ corresponds to a single vertex $v$ of $X$ 
(Figure~\ref{fig:spl-domain}). 
Then we obtain a presentation complex for 
$(\pi_{1}(M_{1,1}); \pi_{1}(S_{1}), \pi_{1}(S_{2}), \pi_{1}(S_{3}))$ 
from the triangles in $\partial X$ which do not contain $v$. 
Hence 
$T(\pi_{1}(M_{1,1}); \pi_{1}(S_{1}), \pi_{1}(S_{2}), \pi_{1}(S_{3})) \leq 6$. 
Theorem~\ref{thm:relabs} implies that 
\[
\sT (M_{1,1}) = 
\sT (\pi_{1}(M_{1,1}); \pi_{1}(S_{1}), \pi_{1}(S_{2}), \pi_{1}(S_{3}))  
\leq 6. 
\]

\end{rem}

\fig[width=5cm]{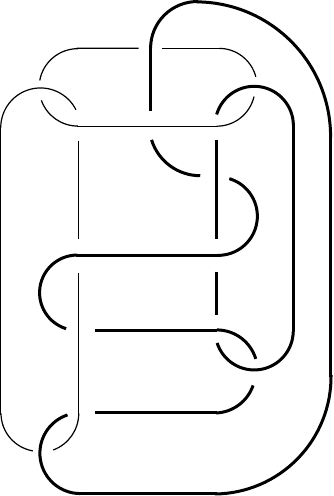}{A link whose complement is $M_{1,1}$}
\fig[width=10cm]{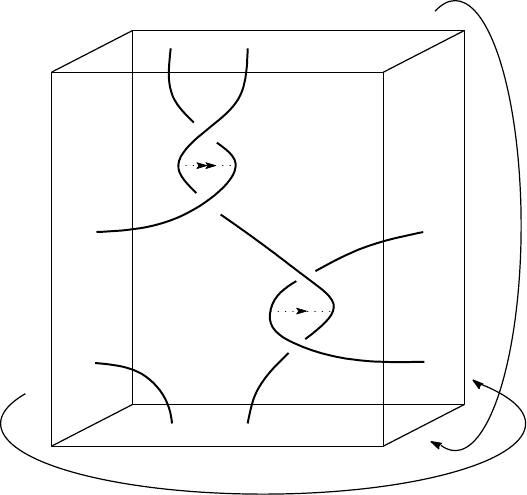}{$M_{1,1}$ as the complement of a link in $T^2 \times [0,1]$}
\fig[width=11cm]{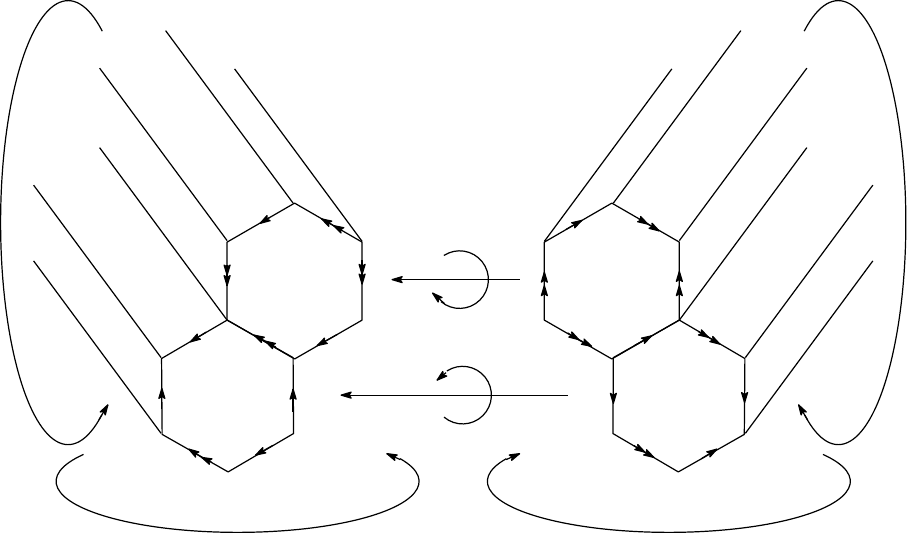}{A decomposition of $M_{1,1}$}
\fig[width=8cm]{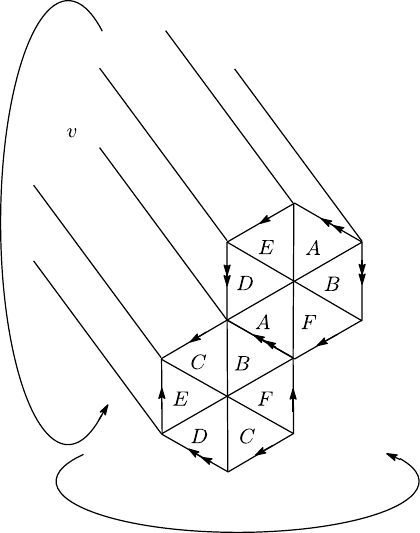}{A fundamental domain $X$ of $M_{1,1}$}
\fig[width=12cm]{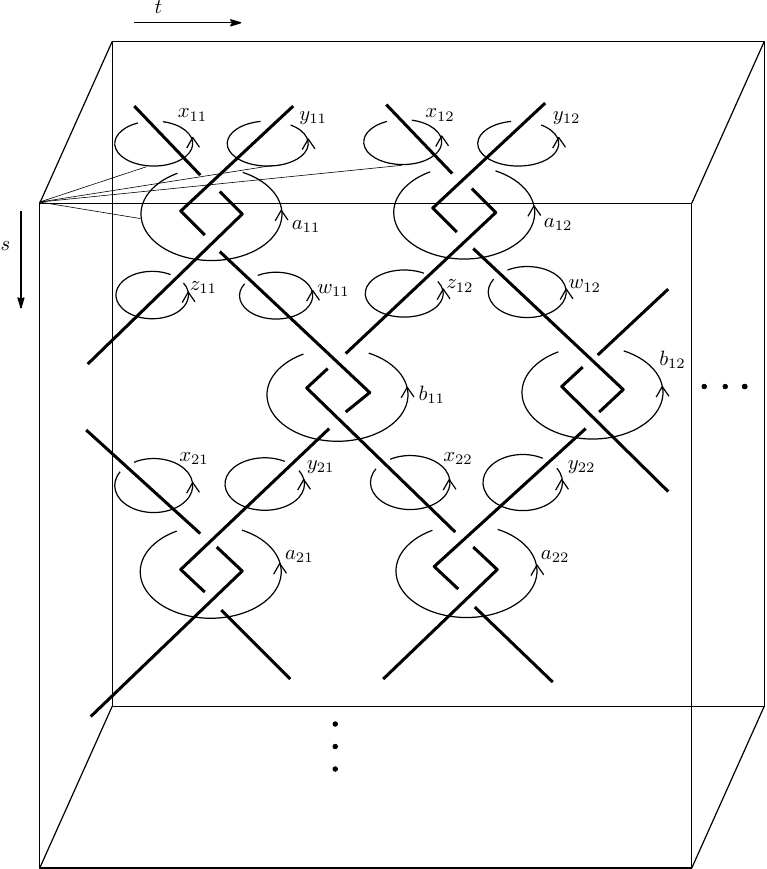}{Generators of $\pi_{1}(M_{m,n})$}

\subsubsection{$d=1$ (Whitehead link complement)}
\label{subsubsection:d1}

The Whitehead link complement $M_{2}$ is obtained 
from one ideal regular octahedron. 
Since $\vol (M_{2}) = 3.6638...$, 
we have $\sigma (M_{2}) = 4$ and $3.6 < \sigma_{\infty}(M_{2}) \leq 4$. 
It is unknown whether $\sigma_{\infty}(M_{2}) = 4$ or not. 
It is known that 
$\pi_{1} (M_{2})$ is an index 12 subgroup of $\PSL (2, \mathcal{O}_{1})$. 
If a hyperbolic manifold $M$ is obtained from ideal regular octahedra, 
the group $\pi_{1}(M)$ is commensurable with $\PSL (2, \mathcal{O}_{1})$ 
as shown in \cite[Section 3, Example 3]{hatcher1983hyperbolic}.

\begin{prop}
\label{prop:Whitehead}
\[
\sT(M_{2}) \leq 2. 
\]
\end{prop}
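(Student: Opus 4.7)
The plan is to imitate the proof of Proposition~\ref{prop:figure8}. The first step is to construct a link $L_{2}$ in $T^{2}\times[0,1]$ whose complement $M_{2,1,1}$ decomposes into $N$ ideal regular octahedra for some integer $N$, and is arithmetically commensurable with $M_{2}$ so that $\sT(M_{2,1,1})=N\sT(M_{2})$ (by Proposition~\ref{prop:vol} applied to a common finite cover, exactly as in the figure-eight case). It then suffices to prove $\sT(M_{2,1,1})\leq 2N$.

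For the second step, consider the family of $mn$-sheeted coverings $M_{2,m,n}\to M_{2,1,1}$ that unwrap the torus factor $m$ times in one direction and $n$ times in the other, and write down an explicit triangular presentation of $\pi_{1}(M_{2,m,n})$ in the exact style used in Proposition~\ref{prop:figure8}: one block of generators and relators per fundamental cell (arising from the edges and 2-cells of the octahedral gluing), together with meridians $s,t$ satisfying $st=ts$ and conjugation relations of the form $x_{m+1,\cdot}=sx_{1,\cdot}s^{-1}$ and $x_{\cdot,n+1}=tx_{\cdot,1}t^{-1}$ along the periodic torus boundaries. A careful triangle count should yield a bound $T(M_{2,m,n})\leq 2Nmn+O(m+n)$, whence
\[
\sT(M_{2,1,1})\leq \inf_{m,n}\frac{T(M_{2,m,n})}{mn}=2N,
\]
and combining with $\sT(M_{2,1,1})=N\sT(M_{2})$ gives $\sT(M_{2})\leq 2$.

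A cleaner alternative, also patterned on the second proof of Proposition~\ref{prop:figure8}, is to assemble the $N$ octahedra directly into a fundamental domain $X$ of $M_{2,1,1}$ chosen so that one of its four cusps collapses onto a single vertex $v$ of $X$. The triangles of $\partial X$ not incident to $v$ would then form a relative presentation complex for $(\pi_{1}(M_{2,1,1});\pi_{1}(S_{1}),\pi_{1}(S_{2}),\pi_{1}(S_{3}))$ with at most $2N$ triangles, and Theorem~\ref{thm:relabs} promotes this to the desired absolute bound $\sT(M_{2,1,1})\leq 2N$.

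The main obstacle is the combinatorial first step: one must exhibit $L_{2}$ together with an explicit symmetric octahedral decomposition of its complement that is at once concrete enough to record the face identifications, gluing isomorphisms and meridian curves needed to present each $\pi_{1}(M_{2,m,n})$, makes the commensurability ratio $N$ transparent, and is tight enough that the triangle count produces a leading coefficient of exactly $2N$ rather than a larger constant. Once such a decomposition is in hand the remaining steps are essentially mechanical and formally parallel to the figure-eight case.
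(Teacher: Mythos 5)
Your outline follows the paper's argument exactly --- both the route via explicit presentations of $mn$-fold coverings of a link complement in $T^2\times[0,1]$ and the alternative via a fundamental domain whose boundary triangles away from one cusp give a relative presentation complex, promoted to an absolute bound by Theorem~\ref{thm:relabs}, are precisely the two arguments the paper gives. But you have deferred the one step that carries all the content: exhibiting the link and its octahedral decomposition. Without that, nothing is proved --- the leading coefficient $2N$ in your triangle count is exactly the quantity at stake, and it cannot be extracted from a presentation you have not written down. You correctly identify this as ``the main obstacle,'' but an obstacle acknowledged is not an obstacle overcome.

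For the record, the paper's construction takes $N=2$: the complement $M_2^{\prime}$ of a specific link in $T^2\times[0,1]$ decomposes into four ideal regular square pyramids, which pair up into two regular ideal octahedra, giving $\sT(M_2^{\prime})=2\,\sT(M_2)$ via commensurability. The explicit presentation of $\pi_1(M_2^{\prime})$ has generators $x_{11},x_{21},x_{22},y_{11},y_{12},y_{21},a,b,s,t$ and exactly four length-three relators not involving $s$ or $t$ (namely $a=y_{11}x_{11}$, $a=x_{22}y_{11}$, $b=y_{12}x_{22}$, $b=x_{22}y_{21}$); after passing to large coverings in the two torus directions only these relators contribute to the leading term, yielding $\sT(M_2^{\prime})\leq 4$ and hence $\sT(M_2)\leq 2$. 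Note that the count of $s,t$-free relators per fundamental cell is what must come out to $2N=4$, and this depends on the particular decomposition chosen --- a different choice of $L_2$ could give a worse constant, so the construction is not merely bookkeeping.
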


\begin{proof}
Since there is an even triangulation of $M_{2}$ 
with four ideal tetrahedra \cite[Example 4]{rubinstein2015even}, 
Lemma~\ref{lem:even} implies the assertion. 
As with the above proposition, however, 
we consider links in $T^2 \times [0,1]$. 
Let $M_{2}^{\prime}$ denote the complement of the link 
in Figure~\ref{fig:spl-d1link}. 
We regard $M_{2}^{\prime}$ as the complement of 
a link in $T^2 \times [0,1]$ 
as shown in Figure~\ref{fig:spl-gen2}. 
 
The manifold $M_{2}^{\prime}$ can be decomposed into 
four ideal regular square pyramids 
as shown in Figure~\ref{fig:spl-gluing2}. 
Since a union of two ideal regular square pyramids 
is an ideal regular octahedron, 
the manifold $M_{2}^{\prime}$ is obtained from two ideal regular octahedra. 
Since $M_{2}^{\prime}$ and $M_{2}$ are commensurable, 
we have 
$\sT(M_{2}^{\prime})/\sT(M_{2}) = \vol(M_{2}^{\prime})/\vol(M_{2}) =2$. 

We obtain a Wirtinger presentation of $\pi_{1} (M_{2}^{\prime})$: 
the generators are 
\[
x_{11}, x_{21}, x_{22}, y_{11}, y_{12}, y_{21}, s, t, 
\]
and the relators are 
\begin{align*}
x_{22} &= y_{11}x_{11}y_{11}^{-1}, & y_{21} &= x_{22}^{-1}y_{12}x_{22}, \\
x_{21} &= sx_{11}s^{-1}, & y_{21} &= sy_{11}s^{-1}, \\
y_{12} &= ty_{11}t^{-1}, & x_{22} &= tx_{21}t^{-1}, & st=ts. 
\end{align*}

After we take large coverings along $T^2 \times [0,1]$ 
as with Proposition~\ref{prop:figure8}, 
the relators which does not contain $s$ or $t$ contribute 
an estimate of the stable presentation length. 
Therefore $\sT(M_2^{\prime}) \leq 4$. 
\end{proof}

We remark that 
it is possible to prove that $\sT(M_2^{\prime}) \leq 4$ by constructing 
a relative presentation complex as with Proposition~\ref{prop:figure8}. 

\fig[width=6cm]{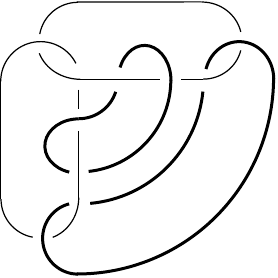}{A link whose complement is $M_{2}^{\prime}$}
\fig[width=8cm]{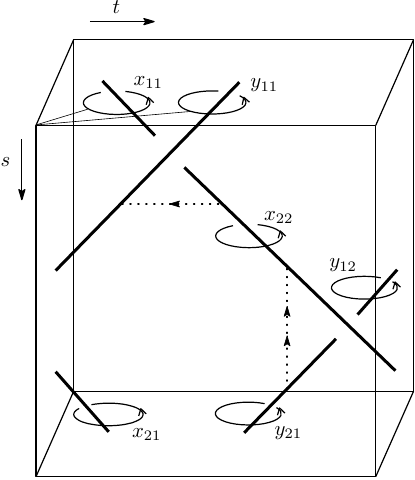}{Generators of $\pi_{1}(M_{2}^{\prime})$}
\fig[width=10cm]{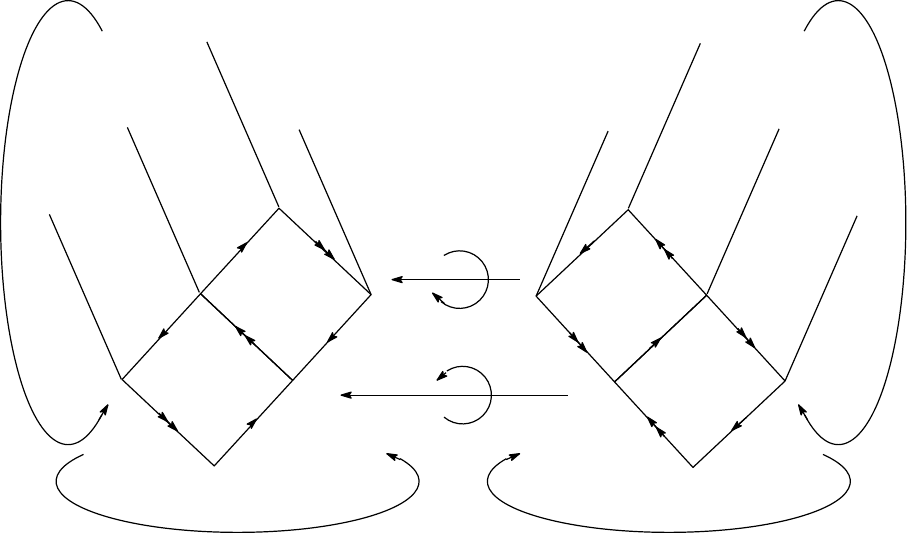}{A decomposition of $M_{2}^{\prime}$}

\subsubsection{$d=7$ (Magic manifold)}
\label{subsubsection:d7}

Let $M_{3}$ denote the complement of the alternating 3-chain link 
in Figure~\ref{fig:spl-d7link}. 
Gordon and Wu~\cite{gordon1999toroidal} 
called $M_{3}$ the magic manifold 
for the reason that 
it gives various interesting examples of Dehn fillings. 
Martelli and Petronio~\cite{martelli2006dehn} 
classified the non-hyperbolic Dehn fillings of $M_{3}$. 
The manifold $M_{3}$ is obtained from two ideal uniform triangular prism. 
Since $\vol (M_{3}) = 5.3334...$, 
we have $\sigma (M_{3}) = 6$ and $ 5.2 < \sigma_{\infty}(M_{3}) \leq 6$. 
The group $\pi _{1} (M_{3})$ is 
an index 6 subgroup of $\PSL (2, \mathcal{O}_{7})$ 
as shown in \cite[Ch.6, Example 6.8.2]{thurston1978geometry}. 

\begin{prop}
\label{prop:magic}
\[
\sT(M_{3}) \leq 3. 
\]
\end{prop}

\begin{proof}
The manifold $M_{3}$ is homeomorphic to 
the complement of a link in $T^2 \times [0,1]$ 
as shown in Figure \ref{fig:spl-gen3}. 
We obtain an explicit presentation of $\pi_{1} (M_{3})$: 
the generators are 
\[
x_{11}, x_{21}, y_{11}, y_{12}, a, s, t, 
\]
and the relators are 
\begin{align*}
a &= y_{12}x_{11}, & a &= x_{11}x_{21}, & a &= x_{21}y_{11}, \\
x_{21} &= sx_{11}s^{-1}, & y_{12} &= ty_{11}t^{-1}, & st &= ts. 
\end{align*}

After we take large coverings along $T^2 \times [0,1]$ 
as with the above propositions, 
the relators which does not contain $s$ or $t$ contribute 
an estimate of the stable presentation length. 
Therefore $\sT(M_{3}) \leq 3$. 
\end{proof}

\fig[width=6cm]{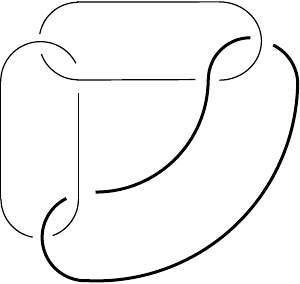}{The alternating 3-chain link}
\fig[width=8cm]{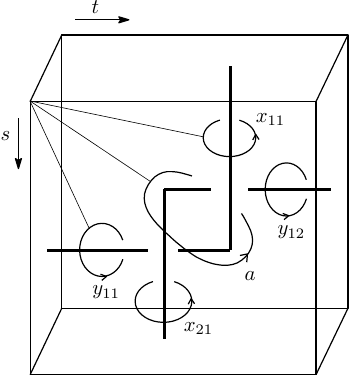}{Generators of $\pi_{1}(M_{3})$}

\subsubsection{$d=2$}
\label{subsubsection:d2}

If a hyperbolic manifold $M$ is obtained from ideal uniform cuboctahedra, 
the group $\pi_{1}(M)$ is commensurable with $\PSL (2, \mathcal{O}_{2})$ 
as shown in \cite{hatcher1983hyperbolic}. 
Let $M_{4}$ denote the complement of the link 
in Figure~\ref{fig:spl-d2link}. 
This link was shown in \cite[Figure 1b]{baker2014principal}. 
The manifold $M_{4}$ is obtained from four ideal uniform cuboctahedra. 
Let $N_{4}$ denote the double of an ideal uniform cuboctahedron 
along the ideal squares. 
Then $M_{4}$ is the double of $N_{4}$ along the 3-punctured spheres. 
We regard five components of the link in Figure~\ref{fig:spl-d2link} 
as horizontal and the seven others as vertical. 
If we cut $M_{4}$ along six horizontal 4-punctured spheres 
and eight vertical 3-punctured spheres, 
then we obtain four ideal uniform cuboctahedra. 
Since $\vol (M_{4}) = 48.1843...$ and a cuboctahedron can be decomposed 
into 14 tetrahedra compatible to the decomposition of $M_4$, 
we have $48 \leq \sigma (M_{4}) \leq 56$ 
and $ 47.4 < \sigma_{\infty}(M_{4}) \leq 56$. 
Although this ideal triangulation of $M_{4}$ is not even, 
the argument in the proof of Lemma~\ref{lem:even} can be applied. 

\begin{prop}
\label{prop:d2}
\[
\sT(M_{4}) \leq 28. 
\]
\end{prop}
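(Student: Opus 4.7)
The plan is to mirror the proofs of Propositions \ref{prop:figure8}, \ref{prop:Whitehead} and \ref{prop:Magic}. Namely, I would view $M_4$ as the complement of a link $L$ in $T^2 \times [0,1]$ coming from Figure \ref{fig:spl-d2link}, write down an explicit triangular presentation of $\pi_1(M_4)$ whose generators are the meridians of $L$ inside a fundamental domain together with the two generators $s, t$ of $\pi_1(T^2)$, and then take large $(m,n)$-coverings along the $T^2$-direction so that the relators involving $s$ or $t$ disappear after dividing by $mn$.

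The relators would split into two groups. The \emph{interior} relators record the face identifications of the ideal cuboctahedral decomposition of $M_4$, with each square face of the cuboctahedron subdivided into two triangles; these are words in the meridians only. The \emph{boundary} relators have the form $x_{\mathrm{right}} = s\, x_{\mathrm{left}}\, s^{-1}$ and $y_{\mathrm{top}} = t\, y_{\mathrm{bottom}}\, t^{-1}$, together with $st = ts$. Following the pattern seen in the previous three propositions, where the number of interior triangular relators equals half the number of tetrahedra in the underlying polyhedral decomposition ($6 = 12/2$ for $M_{1,1}$, $4 = 8/2$ for $M_2^{\prime}$, $3 = 6/2$ for $M_3$), a careful enumeration should produce exactly $7 = 14/2$ interior triangles for the single cuboctahedron making up $M_4$. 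Lifting to the $mn$-sheeted covering $M_4^{(m,n)}$ obtained by going $m$ times around $s$ and $n$ times around $t$, the interior relators contribute $7mn$ triangles while the boundary relators contribute only $O(m+n)$, so Proposition \ref{prop:vol} yields
\[
\sT(M_4) \leq \lim_{m,n\to\infty} \frac{7mn + O(m+n)}{mn} = 7.
\]

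The main obstacle is the bookkeeping: drawing the correct link diagram for $L$ in $T^2 \times [0,1]$, labeling its arcs with meridional generators, and verifying that the face-pairings of the ideal cuboctahedron (with square faces subdivided) really yield $7$ triangular interior relators and not more. A cleaner alternative, modeled on the second proof of Proposition \ref{prop:figure8}, is to build a fundamental domain $X$ for $M_4$ from one ideal regular cuboctahedron with the chosen $T^2$-cusp concentrated at a single vertex $v$ of $X$, read off a relative presentation complex for $(\pi_1(M_4); \pi_1(S_1), \dots)$ from the triangles in $\partial X$ avoiding $v$, and then invoke Theorem \ref{thm:relabs} to pass from the resulting relative bound to the desired bound on $\sT(M_4)$.
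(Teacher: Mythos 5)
Your proposal follows exactly the paper's strategy: realize $M_4$ as a link complement in $T^2\times[0,1]$, write a triangular presentation with meridional generators plus $s,t$, and pass to large coverings along the torus so that only the relators avoiding $s$ and $t$ contribute; the paper's explicit presentation indeed has precisely $7$ such interior relators, matching your ``half the tetrahedra'' count. The only thing missing is the explicit enumeration of generators and relators, which is the substantive bookkeeping the paper carries out and which you correctly identify as the remaining task.
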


\begin{proof}
Let $P$ denote the 2-skeleton of the decomposition of $M_{4}$ 
into four ideal uniform cuboctahedra. 
Let $Q$ denote the subcomplex of $P$ 
consisting of the cells which do not contain a fixed vertex $v$. 
Then the 2-cells of $Q$ consist of 12 triangles and 8 squares. 
By decomposing each square of $Q$ into two triangles, 
we obtain a presentation complex $Q^{\prime}$ for $\pi_{1} (M_{4})$ 
relative to the fundamental groups of the other cusps than $v$. 
Since the 2-cells of $Q$ consist of 28 triangles, 
we have $\sT(M_{4}) \leq 28$ by Theorem~\ref{thm:relabs}. 
\end{proof}

\fig[width=4cm]{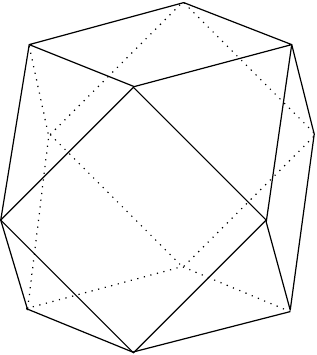}{A cuboctahedron}
\fig[width=10cm]{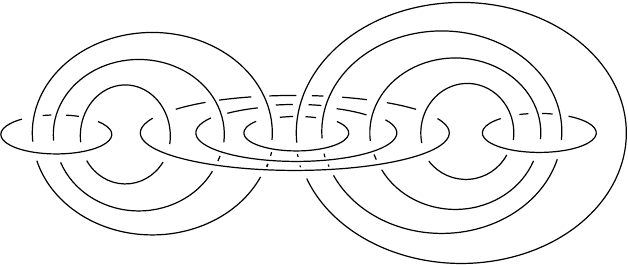}{A link whose complement is $M_{4}$}

\bibliography{ref-spl}

\textsc{Department of Mathematics,  
Kyoto University, Kitashirakawa Oiwake-cho, 
Sakyo-ku, Kyoto 606-8502, Japan.} 

\textit{E-mail address}: \texttt{k.yoshida@math.kyoto-u.ac.jp}

\end{document}